\crefname{hypothesis}{Hypothesis}{Hypotheses}
\title{Direct imaging methods for reconstructing  a locally rough interface from phaseless total-field data or phased far-field data}
\author{Long Li\thanks{Academy of Mathematics and Systems Science, Chinese Academy of Sciences, Beijing 100190,
China ({\tt longli@amss.ac.cn})}
\and
{Jiansheng} Yang\thanks{LMAM, School of Mathematical Sciences, Peking University, Beijing 100871,
China ({\tt jsyang@pku.edu.cn})}
\and
Bo Zhang\thanks{LSEC and Academy of Mathematics and Systems Science, Chinese Academy of
Sciences, Beijing, 100190, China and School of Mathematical Sciences, University of Chinese
Academy of Sciences, Beijing 100049, China ({\tt b.zhang@amt.ac.cn})}
\and
Haiwen Zhang\thanks{Corresponding author. Academy of Mathematics and Systems Science, Chinese Academy of Sciences, Beijing 100190, China ({\tt zhanghaiwen@amss.ac.cn})}}
\newcommand{\R}{{\mathbb R}}
\newcommand{\II}{\mathbb I}
\newcommand{\RR}{{\mathcal{R}}}
\newcommand{\T}{{\mathcal T}}
\newcommand{\C}{{\mathbb C}}
\newcommand{\s}{{\mathbb S}}
\newcommand{\Sp}{{\mathbb S}}
\newcommand{\ds}{\displaystyle}
\newcommand{\no}{\nonumber}
\newcommand{\be}{\begin{eqnarray}}
\newcommand{\ben}{\begin{eqnarray*}}
\newcommand{\en}{\end{eqnarray}}
\newcommand{\enn}{\end{eqnarray*}}
\newcommand{\ba}{\backslash}
\newcommand{\pa}{\partial}
\newcommand{\ov}{\overline}
\newcommand{\se}{\setminus}
\newcommand{\Grad}{{\rm Grad\,}}
\newcommand{\Ima}{{\rm Im\,}}
\newcommand{\G}{\Gamma}
\newcommand{\la}{\lambda}
\newcommand{\Rt}{{\rm Re}}
\newcommand{\wid}{\widetilde}
\newcommand{\Rmnum}[1]{\expandafter\@slowromancap\romannumeral #1@}
\begin{document}

\maketitle

\begin{abstract}
This paper is concerned with the problem of inverse scattering of time-harmonic acoustic plane waves
by a two-layered medium with a locally rough interface in 2D. A direct imaging method is proposed to
reconstruct the locally rough interface from the phaseless total-field data measured on the upper half
of the circle with a large radius  at a fixed frequency or from the phased far-field data measured
on the upper half of the unit circle at a fixed frequency.
The presence of the locally rough interface poses challenges in the theoretical analysis of
the imaging methods. To address these challenges, a technically involved asymptotic analysis is provided
for the relevant oscillatory integrals involved in the imaging methods, based mainly on the techniques
and results in our recent work [L. Li, J. Yang, B. Zhang and H. Zhang, arXiv:2208.00456] on
the uniform far-field asymptotics of the scattered field for acoustic scattering in a two-layered medium.
Finally, extensive numerical experiments are conducted to demonstrate the feasibility and robustness
of our imaging algorithms.
\end{abstract}

\begin{keywords}
direct imaging method, locally rough interface, two-layered medium, phaseless total-field data, phased far-field data.
\end{keywords}

\begin{AMS}
35P25, 35R30, 65N21, 78A46
\end{AMS}

\section{Introduction}

In this paper, we consider the problem of inverse scattering of time-harmonic acoustic plane waves in
a two-layered medium with a locally rough interface in 2D. The background two-layered medium
is composed of two unbounded media with different physical properties. The interface between
the two media is considered to be a local perturbation with a finite height from a planar surface
over a finite interval.
Such problems occur in a broad spectrum of science and engineering, such as remote sensing,
ocean acoustics, geophysical exploration and nondestructive testing.

Many numerical algorithms have been proposed for recovering impenetrable or penetrable locally rough
surfaces from the scattered-field data or far-field data. In \cite{BGL_11}, a continuation approach
using a series of wave frequencies was proposed for reconstructing locally rough surfaces with Dirichlet
boundary conditions. Newton iteration methods with multiple wave frequencies were developed
in \cite{QZZ19,ZZ_13} for recovering locally rough surfaces with Dirichlet or Neumann boundary conditions.
In \cite{jgz_17}, a Kirsch-Kress method was developed for reconstructing penetrable locally rough surfaces.
Further, linear sampling methods for recovering sound-soft or penetrable locally rough surfaces were
proposed in \cite{DJY_17,LYZ2021,LYZ2022}. Recently, a reverse time migration method was proposed
in \cite{LY2023I} for reconstructing sound-soft, sound-hard or penetrable locally rough surfaces
from incident point sources. This method has also been extended to simultaneously recover penetrable
locally rough surfaces and buried obstacles in \cite{LY2023II}.
Moreover, there are also some numerical studies concerning inverse scattering by an unbounded rough
surface (i.e., the case when the surface is a nonlocal perturbation of an infinite plane);
see \cite{BL_13, BL_14, BP_10,ChenS,Lzzd,MS_921,MS_922,RJ_91,Z_20}.

In many practical applications, obtaining the phase information of the wave fields is much harder than
acquiring the intensity (or the modulus) information of the wave fields.
Thus it is often desirable to study inverse scattering with phaseless data.
Some work has been made to develop numerical algorithms for recovering locally rough surfaces
or unbounded rough surfaces from phaseless data. In \cite{Bao_13}, an efficient continuation
method using a series of wave frequencies was developed to reconstruct the shapes of periodic diffraction
profiles from phaseless near-field data. A recursive Newton iteration algorithm with multiple wave
frequencies was proposed in \cite{Bao_16} to recover the shapes of multi-scale rough surfaces from
phaseless near-field data. By using superpositions of two plane waves with different directions as
the incident fields, a recursive Newton iteration algorithm in frequencies
was developed in \cite{ZZ17} to determine the shape and location of locally rough surfaces from
phaseless far-field data. Recently, a direct imaging method was proposed in \cite{xzz19} to recover
locally rough surfaces from phaseless total-field data corresponding to incident plane waves
at a fixed frequency. Further, an iterated marching method based on the parabolic integral equation was
developed in \cite{ChenOS} to recover unbounded rough surfaces from phaseless single frequency data
at grazing angles. It is worth mentioning that all the above work only considered the case of impenetrable
rough surfaces, and few work is available for numerically recovering penetrable rough surfaces with
phaseless data.
For more works on the mathematical and numerical studies (including uniqueness and inversion algorithms)
of relevant inverse scattering problems with phaseless data, we refer
to \cite{Chen1,JLZ19b,JLZ19c,IK11,KM1,KM2,NO2,xzz,G1} and the references therein.

In this paper, we develop two non-iterative numerical methods for our inverse problem of recovering
the locally rough interface from the measurement data corresponding to incident plane waves
at a fixed frequency.
Precisely, we propose two direct imaging methods to reconstruct the locally rough interface from phaseless
total-field data measured on the upper half of the circle with a large radius $R$, based on the imaging
function $I_P(z,R)$ with $z\in \R^2$ (see formula (\ref{eq34}) below), and
from phased far-field data measured on the upper half of the unit circle, based on the imaging
function $I_F(z)$ with $z\in \R^2$ (see formula (\ref{c:8}) below).
The work in this paper is a non-trivial extension of the work \cite{xzz19} from the case of sound-soft
locally rough surfaces to the case of penetrable locally rough surfaces.
In fact, due to the presence of the two-layered background medium, the reflected wave and the scattered
wave for the scattering problem considered in this paper are much more complicated than those for
the scattering problem considered in \cite{xzz19}
(cf. \cite[formula (1.4) and Lemma 2.1]{xzz19}, formula \eqref{eq3} and Lemma \ref{Le:my}).
This then leads to difficulties in the theoretical analysis of the proposed direct imaging methods.
To overcome these difficulties, we provide a technically involved asymptotic analysis for the relevant
oscillatory integrals. It is worth mentioning that our recent work \cite{LYZZ3} on
the uniform far-field asymptotics of the scattered wave for the two-layered medium scattering
problem provides a theoretical foundation for the proposed methods.
From the theoretical analysis, it is expected that both $I_P(z,R)$ with sufficiently large $R$
and $I_F(z)$ will take a large value when the sampling point $z$ is on the locally rough interface
and decay as $z$ moves away from the locally rough interface. Based on these properties, a direct
imaging algorithm with phaseless total-field data and a direct imaging algorithm with phased far-field
data are given to recover the locally rough interface (see Algorithm \ref{A1} and Algorithm \ref{A2} below).
A main feature of our algorithms is that only inner products are needed to compute the imaging functions
and thus they are very cheap in computation.
Finally, numerical examples are carried out to show that our imaging methods can provide an
accurate and reliable reconstruction of the locally rough interface even for the case
of multiple-scale profiles and that our imaging methods are very robust to noises.
To the best of our knowledge, the present paper is the first attempt to develop a non-iterative method
with phaseless total-field data and a non-iterative method with phased
far-field data for recovering locally rough interfaces.

The remaining part of the paper is organized as follows. In Section \ref{sec3},
we introduce the forward and inverse scattering problems under consideration.
In Section \ref{sec:3}, we propose the direct imaging method with phaseless total-field data
and the direct imaging method with phased far-field data for the considered inverse
scattering problems. The theoretical analysis of these methods is also given in
Section \ref{sec:3}. Numerical experiments are conducted in Section \ref{num} to
illustrate the performance of our imaging methods. Finally,
some concluding remarks are given in Section \ref{con}.

\section{The forward and inverse scattering problems}\label{sec3}

In this section, we present the considered forward and inverse scattering problems
in a two-layered medium with a locally rough interface.
We restrict our attention to the two-dimensional case by assuming that the
local perturbation is invariant in the $x_3$ direction.
First, we introduce some notations which will be used throughout the paper.
Let $\Gamma:=\{(x_1,x_2): x_2=h_\Gamma(x_1), x_1\in \R \}$ represent a locally rough surface,
where $h_\Gamma\in C^2(\R)$ has a compact support in $\R$.
Let $\Gamma_p:=\{(x_1,x_2): x_2=h_{\Gamma}(x_1), x_1\in {\rm Supp}(h_{\Gamma})\}$ denote the
local perturbation of $\Gamma$.
Let $\Omega_{\pm}:=\{(x_1,x_2):x_2\gtrless h_\Gamma(x_1),x_1\in \mathbb{R}\}$ denote the homogenous
media above and below $\Gamma$, respectively.
Let $k_\pm={\omega}/{c_\pm}>0$ be two different wave numbers in $\Omega_\pm$, respectively,
with $\omega$ being the wave frequency and $c_\pm$ being the wave speeds in the homogenous media
$\Omega_{\pm}$, respectively.
Define $n:={k_{-}}/{k_{+}}$. Let $\Sp^1_\pm:=\{x = (x_1, x_2)\in\mathbb{R}^2 :|x|=1,x_2 \gtrless 0\}$
denote the upper part and lower part of the unit circle, respectively.
Let $B_R:=\big\{x\in \mathbb{R}^2: \vert x\vert < R \big\}$ be a disk with radius $R>0$.
We will always assume that $R>0$ is large enough so
that the local perturbation ${\Gamma_p} \subset  {B_R}$.
Define $\partial B^+_R:=\partial B_R\cap\Omega_+$.
For any $x\in\mathbb{R}^2$, let $x=(x_1,x_2)$ and $x'=(x_1,-x_2)$.
For any $x\in\mathbb{R}^2$ with $\vert x \vert\neq0$,
let $\hat{x}=x/\vert x \vert=(\cos\theta_{\hat{x}},\sin\theta_{\hat{x}})$ with the angle $\theta_{\hat{x}}\in \left[\left.0,2\pi\right)\right.$.
For any positive integer $\ell$, let $H^{\ell}_{loc}(\mathbb{R}^2)$
be the space of all functions $\phi:\mathbb{R}^2\rightarrow \mathbb{C}$ such that $\phi\in H^{\ell}(B)$
for all open balls $B\subset\mathbb{R}^2$.
For any $t\in\mathbb{R}$ and $a>0$, let $\mathcal S(t,a):= \mathcal S_{1}(t-a) \mathcal S_{2}(t+a)$, where
$\mathcal S_{1}(s)$ and $\mathcal S_{2}(s)$ with $s\in\mathbb{R}$ are defined by
\begin{align*}
\mathcal S_{1}(s):=\left\{
\begin{aligned}
  &\sqrt{|s|},&&s>0,\\
  &-i\sqrt{|s|},&&s\leq0,
\end{aligned}
\right.
\qquad
\mathcal S_{2}(s):=\left\{
\begin{aligned}
  &\sqrt{|s|},&&s>0,\\
  &i\sqrt{|s|},&&s\leq0.
\end{aligned}
\right.
\end{align*}
It can be seen that for any $t\in\mathbb{R}$ and $a>0$,
\begin{align}\label{eq1}
\mathcal{S}(t,a) =\left\{
\begin{aligned} &-i\sqrt{a^2-t^2}&&\textrm{if}~  a^{-1}|t|\leq 1, \\
  &\sqrt{t^2-a^2}&&\textrm{if}~ a^{-1}|t|> 1.
\end{aligned}\right.
\end{align}
We note that for any fixed $a>0$, the function $\mathcal S(\cdot,a)$
can be continued analytically to the complex plane slit along two half-lines
$\{z\in \C: \Rt(z)=a,\; \Ima(z)\ge 0\}$ and $\{z\in \C: \Rt(z)=-a,\;\Ima(z)\le 0\}$
(see \cite[Section 2]{LYZZ3} for more details of the function $\mathcal S(\cdot,\cdot)$).

Consider the time-harmonic ($e^{-\omega t}$ time dependence) incident acoustic
plane wave $u^{i}(x,d):=e^{ik_{+}x\cdot d}$ propagating in the direction $d
=(\cos \theta_d, \sin \theta_d)
\in\Sp^1_-$ with $\theta_d \in(\pi, 2\pi)$.
Then the total field $u^{tot}(x,d)=u^{0}(x,d)+u^{s}(x,d)$ is the sum of the
reference wave $u^0(x,d)$ and the scattered field $u^s(x,d)$.
The reference wave $u^0(x,d)$ is generated by the incident field $u^i(x,d)$ and the two-layered
medium, and is given by (see, e.g., (2.13a) and (2.13b) in \cite{Car17} or Section 4 in \cite{LYZZ3})
\begin{align*}
u^0(x,d):=\begin{cases}
\ds u^i(x,d)+ u^r(x,d),\quad & x\in \R^2_{+},\\
\ds u^t(x,d), \quad & x\in  \R^2_{-},\\
\end{cases}
\end{align*}
where $\R^2_{+}:=\{{(x_1,x_2)}\in\R^2:x_2>0\}$
and $\R^2_{-}:=\{{(x_1,x_2)}\in\R^2:x_2<0\}$ denote
the upper and lower half-spaces, respectively,
and the reflected wave $u^r(x,d)$ and transmitted wave $u^t(x,d)$ are given by
\be\label{eq3}
u^r(x,d):=\mathcal{R}(\pi+\theta_d)e^{ik_{+}x\cdot d^r},\quad
u^t(x,d):=\mathcal{T}(\pi+\theta_d)
e^{ik_{-}x\cdot d^t}.
\en
Here,
$d^{r}:=(\cos\theta_d,-\sin\theta_d)$ is the reflection direction in $\mathbb{S}^1_+$ and
$d^t$ is given by
\ben
d^t:=n^{-1}(\cos\theta_d,-i\mathcal{S}(\cos\theta_d,n)).
\enn
In particular, we can see from (\ref{eq1}) that if $n^{-1}\vert \cos\theta_d\vert \leq1$, then $d^t=(\cos \theta^t_{d},\sin{\theta^t_{d}})$
is the transmission direction in $\mathbb{S}^1_-$ with $\theta^t_{d}\in[\pi,2\pi]$
satisfying $\cos\theta^t_{d}=n^{-1}\cos\theta_d$.
Further,
$\RR(\pi+\theta_d)$ and $\T(\pi+\theta_d)$ in \eqref{eq3} are called
the reflection and transmission coefficients, respectively,
with
$\mathcal{R}$ and $\mathcal{T}$ defined by
\be\label{eq:0}
\mathcal R(\theta):=\frac{{i\sin\theta+\mathcal S(\cos\theta,n)}}{{i\sin\theta-\mathcal S{(\cos\theta,n)}}},
\quad \T(\theta):={\RR(\theta)}+1\quad\textrm{for}~ \theta\in\mathbb{R}.
\en
It is easily seen that for any $d\in\mathbb{S}^1_-$, the reference wave $u^0(x,d)\in H^1_{loc}(\R^2)$
and $u^0(x,d)$ satisfies the Helmholtz equations by the unperturbed two-layered medium
together with the transmission condition on $\Gamma_0:=\{(x_1,0):x_1\in\R\}$, that is,
\begin{align}
\qquad\qquad\ds\Delta u^{0} +{k}^2_\pm u^{0}=0&&& \text{in}\quad\R^2_\pm,\qquad\qquad \label{eq:0.1}\\
\ds[u^{0}]=0,\;\;\left[{\partial u^{0}}/{\partial\nu}\right]=0&& &\text{on}\quad\Gamma_0,\label{eq:0.2}
\end{align}
where $\nu$ denotes the unit normal on $\Gamma_0$ pointing into $\R^2_+$ and
$[\cdot]$ denotes the jump across the interface $\Gamma_0$.
Moreover, the total
field $u^{tot}(x,d)$ and the scattered field $u^s(x,d)$
satisfy the following scattering problem in the two-layered medium with the
locally rough interface $\Gamma$
\begin{align} \label{e:1.1}
\qquad\qquad\ds\Delta u^{{tot}} +{k}^2_\pm u^{tot}=0&&& \text{in}\quad\Omega_\pm,\\
\label{eq5}
\ds[u^{tot}]=0,\;\;\left[{\partial u^{tot}}/{\partial\nu}\right]=0&&&\text{on}\quad\Gamma,\\
\label{eq2}
{\ds\lim_{|x|\rightarrow+\infty}\sqrt{|x|}\left(\frac{\partial u^s}{\partial |x|}-ik_\pm u^s\right)=0}&&& \textrm{uniformly~for~all}~\hat{x} \in \mathbb{S}^1_\pm,
\end{align}
where $\nu$ denotes the unit normal on $\Gamma$ pointing into $\Omega_+$,
$[\cdot]$ denotes the jump across the interface $\Gamma$,
(\ref{e:1.1}) is the Helmholtz equation
and (\ref{eq2}) is the well-known Sommerfeld radiation condition.
See Figure \ref{sca} for the problem geometry.

\begin{figure}
 \centering
 \includegraphics[width=0.6\textwidth]{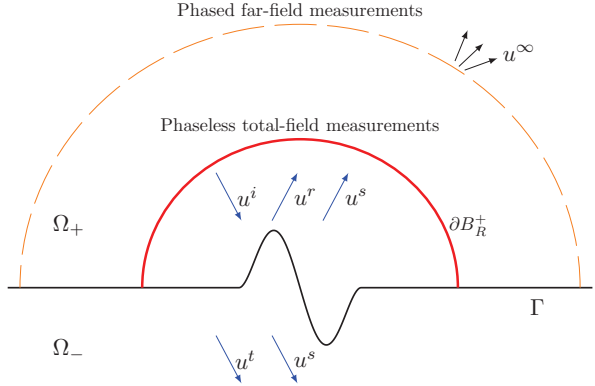}
 \caption{Direct and inverse scattering problems in a
 two-layered medium with the locally rough interface $\Gamma$.}\label{sca}
\end{figure}

The following theorem presents
the well-posedness of the scattering problem (\ref{e:1.1})--(\ref{eq2}), which is a
direct consequence of Theorem 2.5 in \cite{BHY}.
Throughout the paper, we assume that the total field $u^{tot}(x,d)$ and the scattered
field $u^s(x,d)$ are given in the sense of Theorem \ref{thm2}.
See also \cite{YLZ22} for the well-posedness of the two-layered medium scattering problem.

\begin{theorem}[see Theorem 2.5 in \cite{BHY}]\label{thm2}
For any $d\in\mathbb{S}^1_-$,
there exists a unique solution $u^s(x,d)\in H^1_{loc}(\mathbb{R}^2)$ such that
$u^{tot}(x,d):=u^0(x,d)+u^s(x,d)\in H^1_{loc}(\mathbb{R}^2)$, and the total field $u^{tot}(x,d)$ and the scattered field $u^s(x,d)$ solve the scattering problem (\ref{e:1.1})--(\ref{eq2}).
\end{theorem}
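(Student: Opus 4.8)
The plan is to recast the problem as an inhomogeneous source problem for the scattered field alone and then invoke the abstract well-posedness result \cite[Theorem 2.5]{BHY}, as indicated in the statement. First I would subtract the reference wave. Since $u^{tot}=u^0+u^s$ with $u^0$ solving the unperturbed two-layered problem (\ref{eq:0.1})--(\ref{eq:0.2}) across the flat interface $\G_0$, while $u^{tot}$ carries its discontinuity across the perturbed interface $\G$ via (\ref{eq5}), the two fields obey the same Helmholtz equation everywhere except in the bounded region $D$ lying between $\G$ and $\G_0$ over ${\rm Supp}(h_\G)$, where the assigned wave number differs. A direct computation using (\ref{e:1.1}) and (\ref{eq:0.1}) then shows that $u^s$ satisfies $\Delta u^s+k_\pm^2u^s=(k_+^2-k_-^2)u^0\chi_D$ in $\Om_\pm$, the homogeneous transmission conditions $[u^s]=0$ and $[\pa u^s/\pa\nu]=0$ on $\G$ (because $u^0$ is analytic across the part of $\G$ lying off $\G_0$ and satisfies (\ref{eq:0.2}) on $\G\cap\G_0$), together with the Sommerfeld condition (\ref{eq2}). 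The essential bookkeeping is to confirm that the source $f:=(k_+^2-k_-^2)u^0\chi_D$ lies in $L^2(\R^2)$ with compact support, which holds since $u^0$ is smooth and bounded on the compact set $\overline{D}$.

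Second, I would check that this source problem falls precisely within the scope of \cite[Theorem 2.5]{BHY}, which establishes well-posedness for radiating solutions of the two-layered transmission problem with a compactly supported right-hand side. Granting this, the existence and uniqueness of $u^s\in H^1_{loc}(\R^2)$ follow at once, and since $u^0\in H^1_{loc}(\R^2)$ one also obtains $u^{tot}=u^0+u^s\in H^1_{loc}(\R^2)$; reversing the reduction shows the pair $(u^{tot},u^s)$ solves (\ref{e:1.1})--(\ref{eq2}). If instead a self-contained argument were wanted, the standard route would be to truncate to the disk $B_R$ with $\G_p\subset B_R$, impose the exact transparent (Dirichlet-to-Neumann) boundary condition on $\pa B_R$ associated with radiating solutions of the unperturbed two-layered medium outside $B_R$, and cast the problem variationally on $H^1(B_R)$. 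One would then verify a G\aa rding inequality for the resulting sesquilinear form, the principal part being coercive while the $k_\pm^2$ terms together with the smoothing part of the DtN operator are compact by the embedding $H^1(B_R)\hookrightarrow L^2(B_R)$, and close the argument with the Fredholm alternative.

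The main obstacle, which underlies the cited theorem, is the uniqueness of the radiating solution to the homogeneous problem. The difficulty is that the classical Rellich lemma applies to each homogeneous half-space separately, so one must control the behaviour at the interface $\G$ and rule out guided or surface waves supported near it. The plan here is to apply Green's identity to $u^s$ over $B_R\cap\Om_+$ and $B_R\cap\Om_-$, use the homogeneous transmission conditions on $\G$ to cancel the interface contributions, and combine the radiation condition (\ref{eq2}) with the nonnegativity of $\Ima\int_{\pa B_R^+}u^s\,\overline{\pa u^s/\pa|x|}\,ds$ and the analogous integral over $\pa B_R\cap\Om_-$ to force the far-field pattern of $u^s$ to vanish in both media. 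Rellich's lemma then yields $u^s=0$ outside $B_R$, and unique continuation across $\G$, legitimate since the coefficients are piecewise constant and $u^s$ together with its Cauchy data match across $\G$, propagates this to $u^s\equiv0$. With uniqueness in hand the Fredholm alternative delivers existence, completing the proof.
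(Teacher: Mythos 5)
The paper records no proof of this theorem at all: it is asserted to be ``a direct consequence of Theorem 2.5 in \cite{BHY}'', and neither a reduction nor a verification of the hypotheses of that theorem appears in the text. Your proposal is consistent with that citation and supplies what it leaves implicit, so there is no conflict of approach; the substance you add is the explicit reduction. Subtracting the reference wave does convert the plane-wave incidence problem into a transmission problem for $u^s$ with homogeneous jump conditions on $\Gamma$ and a compactly supported $L^2$ source concentrated in the region $D$ between $\Gamma$ and $\Gamma_0$; one small bookkeeping point is that the source carries opposite signs on the two components of $D$ (where $\Omega_+$ overlaps $\R^2_-$ one finds $\Delta u^s+k_+^2u^s=(k_-^2-k_+^2)u^0$, while where $\Omega_-$ overlaps $\R^2_+$ one finds $\Delta u^s+k_-^2u^s=(k_+^2-k_-^2)u^0$), so writing it as a single term $(k_+^2-k_-^2)u^0\chi_D$ is off by a sign on one part. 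Your self-contained backup route --- DtN truncation on $\partial B_R$, G\aa rding inequality, Fredholm alternative, and uniqueness via a Rellich-type argument plus unique continuation --- is the standard machinery and is essentially what \cite{BHY} carries out; the step you rightly single out as delicate, namely the half-plane version of Rellich's lemma and the exclusion of guided waves along the unbounded interface, is precisely the nontrivial content of the cited theorem and cannot be dispatched by the classical Rellich lemma alone, so deferring it to \cite{BHY} is the appropriate resolution rather than a gap.
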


Moreover, we proved in \cite{LYZZ3} that the scattered wave $u^s(x,d)$
has the following asymptotic behavior: for any $d\in\mathbb{S}^1_-$,
\begin{align}\label{eq4}
u^s(x,d) = \frac{e^{ik_{+}|x|}}{\sqrt{|x|}}u^{\infty}(\hat x,d) + u^s_{Res}(x,d), \qquad
|x|\rightarrow +\infty,\qquad x\in\Omega_+,
\end{align}
with the residual term $u^s_{Res}(x,d)$ satisfying
$u^s_{Res}(x,d)=O({{|x|}}^{-3/4})$ as $|x|\rightarrow+\infty$ uniformly for all angles $\theta_{\hat{x}}\in(0,\pi)$,
where
$u^{\infty}(\hat{x},d)$ is called the far-field pattern of the scattered field $u^s(x,d)$
and is given by
\begin{align}\label{eq29}
u^\infty(\hat x,d)= \int_{\partial{{B_R}}}\left[{\frac{\partial G^\infty(\hat{x},y)}{\partial\nu(y)}} u^s(y,d)- \frac{\partial u^s(y,d)}{\partial\nu(y)} G^{\infty}(\hat{x},y) \right]ds(y),\quad \hat x \in \mathbb S^1_+.
\end{align}
Here, $G^{\infty}(\hat{x},y)$ is defined as follows: for any $\hat x=(\cos \theta_{\hat x}, \sin \theta_{\hat x}) \in \Sp^1_+$ and $y=(y_1,y_2)\in \R^2_+\cup \R^2_-$,
\begin{align*}
G^{\infty}(\hat x,y):= \frac{e^{i\frac{\pi}4}}{\sqrt{8\pi k_{+}}}
\left\{\begin{aligned}
&e^{-i k_{+}{\hat x} \cdot y} + \mathcal R(\theta_{\hat x}) e^{-ik_{+}{\hat x}\cdot y{'}},
&&\hat{x}\in\mathbb{S}^1_+,~y\in\mathbb{R}^2_+,\\
&\mathcal T(\theta_{\hat x})e^{-ik_{+}(y_1\cos\theta_{\hat x}+iy_2\mathcal S{(\cos\theta_{\hat x},n)})},
&&\hat{x}\in\mathbb{S}^1_+,~y\in\mathbb{R}^2_-.\\
\end{aligned}\right.
\end{align*}
In \cite{BL16,Car17}, it was proved that for any $d\in\mathbb{S}^1_-$,
the residual term $u^s_{Res}(x,d)$ in (\ref{eq4}) satisfies $u^s_{Res}(x,d)=O({{|x|}}^{-3/2})$ as $|x|\rightarrow+\infty$ for all angles
$\theta_{\hat{x}}\in(0,\pi)$ except possibly
for certain critical angles (we note that there is no critical angle in $(0,\pi)$ in the case $k_+<k_-$
and that there are only two critical angles $\theta_c$ and $\pi-\theta_c$ in $(0,\pi)$ with $\theta_c:=\arccos(k_-/k_+)\in(0,\pi/2)$ in the case $k_+>k_-$).
See Remarks 4 and 5 in \cite{LYZZ3} for discussions on the critical angles.
Further, in \cite{LYZZ3} we have established the uniform far-field asymptotics of the
scattered field $u^s(x,d)$; see also Lemma \ref{Le:my} below for some of our results in \cite{LYZZ3}.
For more discussions on the far-field asymptotic properties of
the scattered field $u^s(x,d)$, we refer to \cite{BL16,Car17,LYZZ3}.

We note that the well-posedness of the direct scattering problem in a two-layered medium with
a general unbounded rough interface (that is, the interface is a nonlocal perturbation
of an infinite plane) has been studied in \cite{DTS,CSZ1999,LI2016}, where
the scattered field is required to satisfy the upward and downward propagating radiation
conditions instead of the Sommerfeld radiation condition.
We mention that the well-posedness of this kind of scattering problem will be used for the theoretical
analysis of our inversion algorithms in Section \ref{sec:3}.

In this paper, we focus on the following two inverse problems (see Figure \ref{sca}).

{\bf Inverse problem with phaseless total-field data (IP1)}: Given the incident plane wave
$u^{i}(x,d)$ with fixed wave number $k_+$, reconstruct the shape and location of the
penetrable locally rough surface $\Gamma$ from the phaseless total-field data
$|u^{tot}(x,d)|$ for all  $x \in {\partial{B^{+}_R}}$ and $d\in \Sp^1_{-}$.

{\bf Inverse problem with phased far-field data (IP2)}: Given the incident plane wave $u^{i}(x,d)$
with fixed wave number $k_+$, reconstruct the shape and location of the  penetrable locally rough
surface $\Gamma$ from the phased far-field data $u^\infty(\hat{x},d)$ for all
$\hat{x}\in\Sp^1_+$ and $d\in \Sp^1_{-}$.

\section{Direct imaging methods for the inverse problems}\label{sec:3}

In this section, we will develop direct imaging methods for the inverse problems (IP1) and (IP2).
For this aim, we introduce some notations which will be used in the rest of the paper.
For the case $k_+>k_-$,
let $\theta_c\in(0,\pi/2)$ be the angle defined as in Section \ref{sec3}.
For any $\theta\in\mathbb{R}$, let $\mathcal{R}_0(\theta):=\mathcal{R}(\theta+\pi)$
with the function $\RR$ given in (\ref{eq:0}).
It is easily seen that for both the cases $k_+<k_-$ and $k_+>k_-$,
the function $\mathcal{R}_0$ and its (distributional) derivative satisfy
\begin{align}\label{eq28}
\|\mathcal R_0(\cdot)\|_{C[\pi,2\pi]}\leq C,\quad \|\mathcal R'_0(\cdot)\|_{L^1(\pi,2\pi)}\le C
\end{align}
with some constant $C>0$.
For any $d\in\mathbb{S}^1_-$, let $d=(d_1,d_2)$.
Throughout the paper, the constants may be different at different places.

For the inverse problem (IP1), we introduce the following imaging function: for $z\in\mathbb{R}^2$,
\begin{align}
I_{P}(z,R) : =&
 \int_{{\partial{B^{+}_R}}}\bigg|\int_{\Sp_{-}^1}\Big\{\Big[|u^{tot}(x,d)|^2-
\big(1+|\RR_0(\theta_d)|^2+\overline{\RR_0(\theta_d)}e^{2ik_{+}x_2d_2}\big)
\Big]
e^{ik_{+}(x-z)\cdot d}\no\\
&-e^{ik_{+}(x{'}-z{'})\cdot d }\Big\}ds(d)\bigg|^2 ds(x),\label{eq34}
\end{align}
where $x'$ and $z'$ is given as in Section \ref{sec3}.
For the inverse problem (IP2),
we introduce the following imaging function: for $z\in\mathbb{R}^2$,
\begin{align} \label{c:8}
&I_F(z):=\no\\
&\int_{\s^1_{+}}\left|\int_{\s^1_{-}}u^{\infty}(\hat x, d)e^{-ik_{+}z\cdot d} ds(d)+{\left(\frac{2\pi}{k_{+}}\right)}^{\frac 12}e^{{-i\pi}/{4}}\left(\RR(\theta_{\hat x})
e^{-ik_{+}\hat x\cdot z{'}}-e^{{-ik_{+}}\hat x \cdot z}\right)\right|^2ds(\hat x).
\end{align}
In Sections \ref{sec1} and \ref{sec:4}, we will study the asymptotic
property of $I_{P}(z,R)$ as $R\rightarrow+\infty$ and the property of $I_F(z)$, respectively,
by analyzing the asymptotic properties of relevant oscillatory integrals.
In doing so, an essential role is played by
the uniform far-field asymptotic properties of the scattered field $u^s(x,d)$ obtained in our work \cite{LYZZ3}.
Based on the results in Sections \ref{sec1} and \ref{sec:4}, we will propose the direct imaging methods for
the inverse problems (IP1) and (IP2) in Section \ref{sec2}.
It should be remarked that currently there is no uniqueness result for the inverse problems (IP1) and (IP2).
However, the numerical examples carried out in Section \ref{num} show that
our inversion algorithms can provide a satisfactory reconstruction of
the locally rough surface $\Gamma$.

\subsection{Asymptotic property of the imaging function $I_P(z,R)$}\label{sec1}

We will study the asymptotic property of the imaging function $I_P(z,R)$ given in \eqref{eq34}
when the radius $R$ is sufficiently large.
For $x\in\Omega_+$ and $z\in\mathbb{R}^2$, define
\begin{align*}
& U_1(x,z):= \int_{\Sp_{-}^1}u^s(x,d)e^{-ik_{+}z\cdot d} ds(d), \\
& U_2(x,z):= \int_{\Sp_{-}^1}\RR_0(\theta_d)e^{ik_{+}(x{'}-z)\cdot d} ds(d),\\
& U_3(x,z):= \int_{\Sp_{-}^1}-e^{ik_{+}(x{'}-z{'})\cdot d} ds(d),
\end{align*}
and
\begin{align*}
& W_{1}(x,z):= \int_{\Sp_{-}^1}\ov{{u}^s(x,d)}e^{2ik_{+}x\cdot d} e^{-ik_{+}z\cdot d}ds(d),\\
& W_{2}(x,z):= \int_{\Sp_{-}^1}\ov{{u}^s(x,d)}\RR_0(\theta_d)e^{2ik_{+}x_1d_1} e^{-ik_{+}z\cdot d}ds(d),\\
& W_{3}(x,z):= \int_{\Sp_{-}^1}u^s(x,d)\overline{\RR_0(\theta_d)}e^{2ik_{+}x_2d_2} e^{-ik_{+}z\cdot d}ds(d),\\
& W_{4}(x,z):= \int_{\Sp_{-}^1}|u^s(x,d)|^2e^{ik_{+}(x-z)\cdot d} ds(d).
\end{align*}
Further, for $x\in\Omega_+$ and $z\in\mathbb{R}^2$, let $U(x,z)$ and $W(x,z)$ be given by
\begin{align}
& U(x,z):= \int_{\Sp_{-}^1}[u^{tot}(x,d)-u^{i}(x,d)]e^{-ik_{+}z\cdot d} ds(d) +U_3(x,z),\label{eq38}\\
& W(x,z):= W_{1}(x,z)+ W_{2}(x,z)+ W_{3}(x,z)+ W_{4}(x,z).\no
\end{align}
It is clear that
\begin{align}\label{eq33}
  U(x,z) = U_1(x,z)+U_2(x,z)+U_3(x,z)\quad
  \textrm{for}~x\in\Omega_+\cap\mathbb{R}^2_+~\textrm{and}~z\in\R^2.
\end{align}
Thus, using the relations $u^{tot}(x,d)=u^{i}(x,d)+u^{r}(x,d)+u^{s}(x,d)$, $|u^{i}(x,d)|=1$ and $|u^r(x,d)|=|\RR_0(\theta_d)|$ for any $x\in \partial B^+_R$ and $d\in\mathbb{S}^1_-$,
we can rewrite $I_{P}(z,R)$ as
\begin{align}\label{eq39}
I_{P}(z,R) = \int_{{\partial{B^{+}_R}}}\left|U(x,z)+W(x,z)\right|^2 ds(x).
\end{align}

Define the function space
\ben
C(\ov{\mathbb{S}^1_+}):=\{\varphi\in C(\mathbb{S}^1_+):\varphi~\textrm{is}~\textrm{uniformly}
~\textrm{continuous}~\textrm{on}~\mathbb{S}^1_+\}
\enn
with the norm
$\|\varphi\|_{C(\ov{\mathbb{S}^1_+})}:=\sup_{x\in \mathbb{S}^1_+}|\varphi(x)|$
and the function space
\ben
C^1(\ov{\Sp^1_+}):=\{\varphi\in C^1(\Sp^1_+):\varphi~\textrm{and}~
\Grad \varphi~\textrm{are}~\textrm{uniformly}
~\textrm{continuous}~\textrm{on}~\Sp^1_+\}
\enn
with the norm
$\|\varphi\|_{C^1(\ov{\mathbb{S}^1_+})}:=\sup_{x\in\mathbb{S}^1_+}|\varphi(x)|
+\sup_{x\in\mathbb{S}^1_+}|\Grad \varphi(x)|$,
where ${\rm Grad}$ denotes the surface gradient on $\Sp^1_+$.
Then we need the following
uniform far-field asymptotic properties of the scattered field $u^s(x,d)$ for $x\in\Omega_+$
which were obtained in \cite{LYZZ3}.

\begin{lemma}[see Theorems 13 and 14 in \cite{LYZZ3}]\label{Le:my}
 Let $x=|x|\hat x=|x|(\cos \theta_{\hat x}, \sin \theta_{\hat x} )\in \Omega_{+}$ with $\theta_{\hat{x}}\in(0,\pi)$
and $\vert x\vert > R$, where $R>0$ is large enough
such that $\Gamma_p \subset B_R$.
For $d \in \Sp^1_{-}$,
let $u^s(x,d)$ be the scattered field of the scattering problem
(\ref{e:1.1})--(\ref{eq2}).
Then the following statements hold true.
\begin{enumerate}[(a)]
\item\label{sta1}
For the case  $k_+<k_-$, the scattered field $u^s(x,d)$ has the asymptotic behavior
\ben
u^{s}(x,d)=\frac{e^{ik_{+}|x|}}{\sqrt{|x|}}u^{\infty}(\hat x, d)+ u^s_{Res}(x,d)\quad\textrm{for}~x\in\Omega_+\ba\ov{B_R}
\enn
with the far-field patten $u^{\infty}(\hat x, d)$ of the scattered field given by (\ref{eq29}), where
$u^{\infty}(\hat x, d)$ satisfies $u^{\infty}(\cdot, d)\in C^1(\ov{\Sp^1_{+}})$ with
\begin{align*}
\| u^{\infty}(\cdot, d)\|_{C^1(\ov{\Sp^1_{+}})}\le C\quad\textrm{for~all}~d \in \Sp^1_{-},
\end{align*}
and
$u^s_{Res}(x,d)$ satisfies
\begin{align*}
&|u^s_{Res}(x,d)|\le {C}{|x|^{-3/2}},\quad |x|\rightarrow+\infty,\notag
\end{align*}
uniformly for all $\theta_{\hat{x}}\in(0,\pi)$ and $d \in \Sp^1_{-}$.

\item\label{sta2}
For the case $k_+>k_-$, the scattered field $u^s(x,d)$ has the asymptotic behavior
\be \label{eq27}
u^{s}(x,d)=\frac{e^{ik_{+}|x|}}{\sqrt{|x|}}u^{\infty}(\hat x, d)+ u^s_{Res}(x,d)\quad\textrm{for}~x\in\Omega_+\ba\ov{B_R}
\en
with the far-field pattern $u^\infty(\hat{x},d)$ of the scattered field given by (\ref{eq29}),
where $u^\infty(\hat{x},d)$ satisfies
$u^{\infty}(\cdot, d)\in C(\ov{\Sp^1_{+}})$ and
${\rm Grad}_{\hat x}\,u^{\infty}(\cdot, d)\in L^1(\Sp^1_{+})$ with
\be\label{eq:t6}
&\|u^{\infty}(\cdot, d)\|_{C(\ov{\Sp^1_{+}})},~
\left\|{\rm Grad}_{\hat x}\,u^{\infty}(\cdot, d)\right\|_{L^1(\Sp^1_{+})} \le C
\quad\textrm{for~all}~d\in\Sp^1_{-},
\en
and $u^s_{Res}(x,d)$ satisfies
\begin{align*}
&\left|u^s_{Res}(x,d)\right|\le{C}{|x|^{-3/4}},\quad |x|\rightarrow+\infty,
\end{align*}
uniformly for all $\theta_{\hat{x}}\in(0,\pi)$ and $d \in \Sp^1_{-}$,
\begin{align*}
&\left|u^s_{Res}(x,d)\right|\le{C}{{{\left|
\theta_c-\theta_{\hat x}
\right|}^{-\frac 32}}|x|^{-\frac 32}},\quad|x|\rightarrow+\infty,
\end{align*}
uniformly for all $\theta_{\hat{x}}\in (0,\theta_c)\cup(\theta_c,\pi/2)$ and $d \in \Sp^1_{-}$, and
\begin{align*}
&\left|u^s_{Res}(x,d)\right|\le{C}{{{\left|
\pi-\theta_c-\theta_{\hat x}
\right|}^{-\frac 32}}|x|^{-\frac 32}},\quad|x|\rightarrow+\infty,
\end{align*}
uniformly for all $\theta_{\hat{x}}\in \left.\left[\pi/2,\pi-\theta_c\right.\right)\cup(\pi-\theta_c,\pi)$
and $d \in \Sp^1_{-}$.
\end{enumerate}
Here, $C>0$ is a constant independent of $x$ and $d$.
\end{lemma}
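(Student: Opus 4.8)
The plan is to reduce every claim to the far-field asymptotics of the two-layered Green's function $G(x,y)$ and then to a uniform stationary-phase (steepest-descent) analysis of its Sommerfeld integral representation. First I would write, for $|x|>R$, the Green's representation
\[
u^s(x,d)=\int_{\pa B_R}\Big[\frac{\pa G(x,y)}{\pa\nu(y)}\,u^s(y,d)-\frac{\pa u^s(y,d)}{\pa\nu(y)}\,G(x,y)\Big]\,ds(y),
\]
so that the expansion \eqref{eq29} of $u^s$ is inherited from the far-field expansion $G(x,y)=e^{ik_{+}|x|}|x|^{-1/2}G^{\infty}(\hat x,y)+(\textrm{residual})$, uniformly in $y\in\pa B_R$. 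The argument then splits into two independent parts: the regularity and uniform-in-$d$ boundedness of the far-field pattern $u^{\infty}(\cdot,d)$, and the pointwise decay of the residual $u^s_{Res}(x,d)$.

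For the regularity of $u^{\infty}$ I would exploit that in \eqref{eq29} the observation direction $\hat x$ enters only through the kernel $G^{\infty}(\hat x,y)$, since the Cauchy data $u^s(\cdot,d)$ and $\pa_\nu u^s(\cdot,d)$ on $\pa B_R$ are independent of $\hat x$; hence differentiating $u^{\infty}(\cdot,d)$ in $\theta_{\hat x}$ reduces to differentiating the explicit functions $\RR(\theta_{\hat x})$, $\T(\theta_{\hat x})$ and $\mathcal{S}(\cos\theta_{\hat x},n)$. When $k_+<k_-$ (so $n>1$) one has $n^{-1}|\cos\theta_{\hat x}|<1$ for all $\theta_{\hat x}\in(0,\pi)$, so by \eqref{eq1} the vertical wavenumber $\mathcal{S}(\cos\theta_{\hat x},n)=-i\sqrt{n^2-\cos^2\theta_{\hat x}}$ is smooth and bounded away from zero; therefore $\RR,\T$ are smooth and $u^{\infty}(\cdot,d)\in C^1(\ov{\Sp^1_{+}})$. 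When $k_+>k_-$ (so $n<1$) the transition $|\cos\theta_{\hat x}|=n$ occurs exactly at the critical angles $\theta_c=\arccos n$ and $\pi-\theta_c$, where $\mathcal{S}$ vanishes and its $\theta_{\hat x}$-derivative blows up like $|\theta_c-\theta_{\hat x}|^{-1/2}$; this is integrable, giving $u^{\infty}(\cdot,d)\in C(\ov{\Sp^1_{+}})$ and $\Grad_{\hat x}u^{\infty}(\cdot,d)\in L^1(\Sp^1_{+})$, which is \eqref{eq:t6}. The uniform-in-$d$ bounds follow once the Cauchy data obey $\|u^s(\cdot,d)\|_{H^1(\pa B_R)}\le C$ uniformly in $d$, which comes from the well-posedness in Theorem \ref{thm2} applied to the $d$-family of incident fields $u^0(\cdot,d)$, whose reflection and transmission coefficients are uniformly bounded by \eqref{eq28}.

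For the residual, I would pass to a Sommerfeld spectral representation of $G(x,y)$ as an oscillatory integral in the horizontal wavenumber $\xi$, in which the layered structure enters through the vertical wavenumbers $\mathcal{S}(\xi,k_{+})$ and $\mathcal{S}(\xi,k_{-})$. For a fixed direction $\hat x$ with $\theta_{\hat x}$ bounded away from the critical angles, the phase has a single nondegenerate saddle at $\xi=k_{+}\cos\theta_{\hat x}$, and the standard stationary-phase expansion yields the leading term $e^{ik_{+}|x|}|x|^{-1/2}u^{\infty}(\hat x,d)$ with a residual of order $|x|^{-3/2}$, recovering the sharp rate quoted from \cite{BL16,Car17}. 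The global bound $|x|^{-3/4}$ of \eqref{eq4} serves as the a priori fallback that remains valid even at the critical angles.

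The hard part is the uniform behavior near the critical angles. There the saddle $\xi=k_{+}\cos\theta_{\hat x}$ coalesces with the branch point $\xi=\pm k_{-}$ of the amplitude generated by the lower vertical wavenumber $\mathcal{S}(\xi,k_{-})$ — which is reached precisely when $\cos\theta_{\hat x}=\pm n$ — so the naive stationary-phase error estimate degenerates and must be replaced by a uniform asymptotic analysis of an oscillatory integral with a coalescing saddle and branch point. Carrying this out, which is the core technical contribution of \cite{LYZZ3}, produces a residual bound carrying the singular prefactor $|\theta_c-\theta_{\hat x}|^{-3/2}$ (respectively $|\pi-\theta_c-\theta_{\hat x}|^{-3/2}$); away from a neighborhood of the critical angles this collapses to the plain $|x|^{-3/2}$ rate, while inside the shrinking window $|\theta_c-\theta_{\hat x}|\lesssim|x|^{-1/2}$ it is superseded by the global $|x|^{-3/4}$ estimate of \eqref{eq4}, so the three stated bounds are mutually consistent. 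The main obstacle, and the reason \cite{LYZZ3} is indispensable here, is keeping every constant independent of both $x$ and $d$ throughout this uniform coalescence analysis.
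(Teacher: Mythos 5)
The paper offers no proof of this lemma at all: it is imported verbatim from Theorems 13 and 14 of \cite{LYZZ3}, and the bracketed citation in the lemma header is the entirety of the paper's justification. So there is no in-paper argument for your proposal to diverge from. Measured against what the cited reference must contain, your outline is a faithful reconstruction of the strategy: the Green's representation over $\pa B_R$ transfers the problem to the far-field asymptotics of the two-layered Green function; the regularity of $u^{\infty}(\cdot,d)$ does reduce to the explicit $\theta_{\hat x}$-dependence of $G^{\infty}(\hat x,y)$ through $\RR$, $\T$ and $\mathcal S(\cos\theta_{\hat x},n)$, and your dichotomy is right --- for $k_+<k_-$ one has $n>1$ so $\mathcal S(\cos\theta_{\hat x},n)=-i\sqrt{n^2-\cos^2\theta_{\hat x}}$ is smooth and nonvanishing, giving $C^1(\ov{\Sp^1_+})$, while for $k_+>k_-$ the square-root degeneracy at $\theta_c$ and $\pi-\theta_c$ yields only continuity plus an $L^1$ gradient as in \eqref{eq:t6}. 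Your consistency check of the three residual rates (the weighted $|\theta_c-\theta_{\hat x}|^{-3/2}|x|^{-3/2}$ bound beating $|x|^{-3/4}$ exactly when $|\theta_c-\theta_{\hat x}|\gtrsim|x|^{-1/2}$, and being superseded by it inside that window) is also correct and is a genuinely useful sanity check that the lemma's three estimates fit together.

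That said, as a proof attempt the proposal establishes nothing beyond what the citation already does, because the one step that carries all the content --- the uniform asymptotic expansion of the Sommerfeld integral when the saddle $\xi=k_+\cos\theta_{\hat x}$ coalesces with the branch point $\xi=\pm k_-$, with constants uniform in both $\hat x$ and $d$ --- is exactly the statement being proved, and you defer it back to \cite{LYZZ3}. Two smaller points would also need attention in a self-contained write-up: the uniform-in-$d$ bound on the Cauchy data $\|u^s(\cdot,d)\|_{H^1(\pa B_R)}\le C$ does not follow from Theorem \ref{thm2} alone (which gives existence and uniqueness for each fixed $d$) but requires a uniform a priori estimate for the solution operator applied to the $d$-family of data; and the exchange of the $\hat x$-differentiation with the integral in \eqref{eq29} near the critical angles has to be justified in the distributional sense, since the pointwise derivative of $\RR(\theta_{\hat x})$ blows up there. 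Neither is a flaw in the plan, but both are gaps between the outline and a proof. Since the paper itself treats the lemma as external input, your sketch is an acceptable gloss on the citation rather than a replacement for it.
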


As a consequence of Lemma \ref{Le:my}, we have the following lemma on the residual term
$u^s_{Res}(x,d)$ in \eqref{eq27} for the case $k_+>k_-$.

\begin{lemma}\label{le:2}
Assume $k_+>k_-$. For $d \in \Sp^1_{-}$,
let $u_{Res}^s(x,d)$ be the residual term given in (\ref{eq27}).
Then we have
\begin{align*}
\int_{\partial B^+_R} |u_{Res}^s(x,d)|ds(x) \le C R^{-{1}/4}, \quad
\int_{\partial B^+_R} |u_{Res}^s(x,d)|^2ds(x) \le C R^{-1}
\end{align*}
as $R\rightarrow+\infty$
uniformly for all $d \in \Sp^1_{-}$. Here, $C>0$ is a constant independent of $R$.
\end{lemma}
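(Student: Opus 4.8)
The plan is to deduce both bounds from the three pointwise estimates on $u^s_{Res}(x,d)$ provided in part~\eqref{sta2} of Lemma~\ref{Le:my}, by integrating over the semicircle $\partial B^+_R$. I parametrize $x = R\hat{x} = R(\cos\theta_{\hat{x}},\sin\theta_{\hat{x}})$ with $\theta_{\hat{x}}\in(0,\pi)$, so that $ds(x) = R\,d\theta_{\hat{x}}$ and the integrals over $\partial B^+_R$ become $R\int_0^\pi |u^s_{Res}(R\hat{x},d)|^{\,p}\,d\theta_{\hat{x}}$ for $p=1,2$. The difficulty is that the uniform $|x|^{-3/4}$ bound alone is too weak: it gives $\int_{\partial B^+_R}|u^s_{Res}|^2\,ds \le C R\cdot R^{-3/2} = CR^{-1/2}$ and $\int_{\partial B^+_R}|u^s_{Res}|\,ds \le C R\cdot R^{-3/4} = CR^{1/4}$, both of which are far worse than the claimed $R^{-1}$ and $R^{-1/4}$. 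So the whole point is to exploit the \emph{sharper} $|x|^{-3/2}$ decay available away from the two critical angles $\theta_c$ and $\pi-\theta_c$, while keeping the weak $|x|^{-3/4}$ bound only on shrinking neighborhoods of those angles whose measure one optimizes in $R$.

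Concretely, I would split $(0,\pi)$ into the two small arcs $J_\delta := (\theta_c-\delta,\theta_c+\delta)\cup(\pi-\theta_c-\delta,\pi-\theta_c+\delta)$ around the critical angles and the complementary set $(0,\pi)\setminus J_\delta$, where $\delta = \delta(R)$ is a width to be chosen. On $J_\delta$ I use the uniform estimate $|u^s_{Res}|\le C|x|^{-3/4}=CR^{-3/4}$, contributing $R\cdot R^{-3/4}\cdot(4\delta) = C\delta R^{1/4}$ to the $L^1$ integral and $R\cdot R^{-3/2}\cdot(4\delta)=C\delta R^{-1/2}$ to the $L^2$ integral. On the complement, I use the near-critical bound $|u^s_{Res}|\le C|\theta_c-\theta_{\hat{x}}|^{-3/2}R^{-3/2}$ (and the symmetric one near $\pi-\theta_c$); the singular factor $|\theta_c-\theta_{\hat{x}}|^{-3/2}$ is integrable near its worst point only down to the cutoff $\delta$, so $\int_{\{|\theta_c-\theta_{\hat{x}}|>\delta\}}|\theta_c-\theta_{\hat{x}}|^{-3/2}\,d\theta_{\hat{x}} \le C\delta^{-1/2}$ for the $L^1$ case and $\int|\theta_c-\theta_{\hat{x}}|^{-3}\,d\theta_{\hat{x}} \le C\delta^{-2}$ for the $L^2$ case.

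Combining the pieces, the $L^1$ bound becomes $\int_{\partial B^+_R}|u^s_{Res}|\,ds \le C(\delta R^{1/4} + R\cdot R^{-3/2}\delta^{-1/2}) = C(\delta R^{1/4}+R^{-1/2}\delta^{-1/2})$, and the $L^2$ bound becomes $\int_{\partial B^+_R}|u^s_{Res}|^2\,ds \le C(\delta R^{-1/2}+R\cdot R^{-3}\delta^{-2})=C(\delta R^{-1/2}+R^{-2}\delta^{-2})$. It remains to balance the two competing terms by choosing $\delta$ appropriately in each case. For $L^1$, setting $\delta R^{1/4}\sim R^{-1/2}\delta^{-1/2}$ gives $\delta\sim R^{-1/2}$ and each term of order $R^{-1/4}$, yielding the claimed $CR^{-1/4}$. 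For $L^2$, setting $\delta R^{-1/2}\sim R^{-2}\delta^{-2}$ gives $\delta\sim R^{-1/2}$ as well, and each term of order $R^{-1}$, yielding the claimed $CR^{-1}$.

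The main obstacle is really just the bookkeeping of this split-and-balance argument, and I would take care to verify two technical points. First, that the constants in all three estimates of Lemma~\ref{Le:my}\eqref{sta2} are uniform in $d\in\Sp^1_-$, which is exactly what the lemma asserts, so the final constant $C$ is independent of both $R$ and $d$ as required. Second, that the cutoff $\delta=\delta(R)\to 0$ is eventually smaller than $\theta_c$ and than the gap between $\theta_c$ and $\pi/2$, so that the arcs in $J_\delta$ stay disjoint and remain inside $(0,\pi)$; this holds for all sufficiently large $R$, which is the regime in the statement. Assembling these estimates over the finitely many arcs and applying the triangle inequality then completes the proof.
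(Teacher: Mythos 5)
Your proposal is correct and follows essentially the same route as the paper: the paper also splits $\partial B^+_R$ into $R^{-1/2}$-width arcs around the critical angles $\theta_c$ and $\pi-\theta_c$ (where the uniform $|x|^{-3/4}$ bound is used) and their complement (where the $|\theta_c-\theta_{\hat x}|^{-3/2}|x|^{-3/2}$-type bounds are integrated), arriving at the same cutoff $\varepsilon=R^{-1/2}$ and the same final rates. The only cosmetic difference is that the paper fixes $\varepsilon=R^{-1/2}$ at the outset rather than deriving it by balancing the two contributions as you do.
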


\begin{proof}
Let $R$ be large enough. Choose $\varepsilon = R^{-1/2}$ and define the set $\mathbb S^1_{\theta_c,\varepsilon}:=\{(\cos\theta,\sin\theta): \theta\in \II_\varepsilon\}$ with $\II_\varepsilon:=\{\theta\in (0,\pi): |\theta-\theta_c|\ge \varepsilon, |\theta-(\pi-\theta_c)|\ge \varepsilon\}$.
Then it follows from the statement (\ref{sta2}) of Lemma \ref{Le:my} that
\begin{align*}
\int_{\partial B^+_R}&|u_{Res}^s(x,d)|ds(x) = R\bigg\{\int_{\Sp^1_{+}\setminus \Sp^1_{\theta_c,\varepsilon}}+  \int_{\Sp^1_{\theta_c,\varepsilon}} \bigg\}\left|{u}^s_{Res}(R\hat x,d)\right|ds(\hat x)\\
&\le C\varepsilon R^{\frac14} + \frac{C}{R^{\frac 12}}\left(\int_{(0,\pi/2)\cap\II_{\varepsilon}}\frac{1}{|\theta_c-\theta_{\hat x}|^{\frac32}}d\theta_{\hat x}  + \int_{\left[\left.\pi/2,\pi\right)\right.\cap \II_{\varepsilon}}\frac{1}{|\pi-\theta_c-\theta_{\hat x}|^{\frac32}}d\theta_{\hat x}\right)\\
&\le C\varepsilon R^{\frac14} + \frac{C}{(\varepsilon R)^{\frac 12}}+\frac{C}{R^{\frac 12}} \le C R^{-\frac{1}4}
\end{align*}
and
\begin{align*}
\int_{\partial B^+_R} &|u_{Res}^s(x,d)|^2ds(x) = R\bigg\{\int_{\Sp^1_{+}\setminus \Sp^1_{\theta_c,\varepsilon}}+  \int_{\Sp^1_{\theta_c,\varepsilon}} \bigg\}\left|{u}^s_{Res}(R\hat x,d)\right|^2ds(\hat x)\\
&\le\frac{C\varepsilon}{R^{\frac12}}+ \frac{C}{R^{2}}\left(\int_{(0,\pi/2)\cap\mathbb I_{\varepsilon}}\frac{1}{|\theta_c-\theta_{\hat x}|^{3}}d\theta_{\hat x}  + \int_{\left[\left.\pi/2,\pi\right)\right.\cap\mathbb I_{\varepsilon}}\frac{1}{|\pi-\theta_c-\theta_{\hat x}|^{3}}d\theta_{\hat x}\right)\\
&\le \frac{C\varepsilon}{R^{\frac12}} + \frac{C}{(\varepsilon R)^{2}}+\frac{C}{R^2}\le C R^{-1}.
\end{align*}
The proof is thus complete.
\end{proof}

We also need the following reciprocity relation of the far-field pattern.

\begin{lemma}\label{lem1}
For $\hat x\in \Sp^1_{+}$ and $d\in \Sp^1_{-}$,
let $u^{\infty}(\hat x, d)$ be the far-field pattern of the scattering problem (\ref{e:1.1})--(\ref{eq2}).
Then we have the reciprocity relation $u^{\infty}(\hat x, d)= u^{\infty}(-d,-\hat x)$
for all $\hat x\in \Sp^1_{+}$ and $d\in \Sp^1_{-}$.
\end{lemma}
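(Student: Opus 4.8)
The plan is to prove the reciprocity by the classical device of applying Green's second identity to two total fields corresponding to two incident directions, combined with the far-field representation (\ref{eq29}). The starting point is the observation that the kernel $G^\infty(\hat x,y)$ in (\ref{eq29}) is, up to a constant, nothing but the reference wave associated with the \emph{reversed} observation direction. Precisely, writing $\gamma_+:=e^{i\pi/4}/\sqrt{8\pi k_+}$, a direct comparison of the explicit formula for $G^\infty(\hat x,y)$ with the definition of $u^0$ via (\ref{eq3}) shows that
\[
G^\infty(\hat x,y)=\gamma_+\,u^0(y,-\hat x)\qquad\text{for all }\hat x\in\Sp^1_+,\ y\in\R^2_+\cup\R^2_-.
\]
Verifying this uses only that $-\hat x\in\Sp^1_-$ when $\hat x\in\Sp^1_+$ (so $u^0(\cdot,-\hat x)$ is defined), the $2\pi$-periodicity of $\RR$ and $\T$ from (\ref{eq:0}) giving $\RR(\pi+\theta_{-\hat x})=\RR(\theta_{\hat x})$ and $\T(\pi+\theta_{-\hat x})=\T(\theta_{\hat x})$, and the evenness of $\mathcal S(\cdot,n)$ in its first argument (immediate from (\ref{eq1})); one then matches the incident and reflected terms of $u^0(y,-\hat x)$ in $\R^2_+$ against the two summands of $G^\infty$, and its transmitted term in $\R^2_-$ against the remaining one. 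Substituting this identity into (\ref{eq29}) rewrites the far-field pattern as
\[
u^\infty(\hat x,d)=\gamma_+\int_{\pa B_R}\Big[u^s(y,d)\,\frac{\pa u^0(y,-\hat x)}{\pa\nu(y)}-u^0(y,-\hat x)\,\frac{\pa u^s(y,d)}{\pa\nu(y)}\Big]\,ds(y),
\]
and, after interchanging $\hat x$ and $d$, an entirely analogous formula for $u^\infty(-d,-\hat x)$ (note $-(-d)=d$).

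Next I would apply Green's second identity to the two total fields $u^{tot}(\cdot,d)$ and $u^{tot}(\cdot,-\hat x)$ separately on $\Omega_+\cap B_R$ and $\Omega_-\cap B_R$. Since both solve the same Helmholtz equation (\ref{e:1.1}) in each layer and satisfy the transmission conditions (\ref{eq5}) on $\Gamma$, adding the two identities makes the contributions over $\Gamma$ cancel (the two outward normals there are opposite and the traces agree), leaving only
\[
\int_{\pa B_R}\Big[u^{tot}(\cdot,d)\,\frac{\pa u^{tot}(\cdot,-\hat x)}{\pa\nu}-u^{tot}(\cdot,-\hat x)\,\frac{\pa u^{tot}(\cdot,d)}{\pa\nu}\Big]\,ds=0.
\]
I then expand each $u^{tot}=u^0+u^s$ into four groups. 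The pure reference-wave group vanishes identically for large $R$: running the same Green's identity argument for $u^0(\cdot,d)$ and $u^0(\cdot,-\hat x)$ on $\R^2_\pm\cap B_R$ and using the flat-interface transmission conditions (\ref{eq:0.2}) on $\Gamma_0$ (which coincides with $\Gamma$ near $\pa B_R$) shows its boundary integral is zero. The pure scattered-wave group tends to $0$ as $R\to\infty$, since on $\pa B_R^\pm$ both scattered fields satisfy the Sommerfeld condition (\ref{eq2}) with the common wave number $k_\pm$, so the Wronskian-type integrand is $o(|x|^{-1})$ and integrates to $o(1)$ over the semicircles. The two remaining mixed groups are exactly the integrals identified above: one equals $u^\infty(\hat x,d)/\gamma_+$ and the other equals $-u^\infty(-d,-\hat x)/\gamma_+$. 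Because these mixed integrals are independent of $R$ (being fixed far-field values through (\ref{eq29})), passing to the limit $R\to\infty$ in the expanded identity forces $u^\infty(\hat x,d)-u^\infty(-d,-\hat x)=0$, which is the claim.

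The Green's-identity bookkeeping and the decay of the scattered--scattered term are routine. The step needing the most care—and the genuinely two-layered feature of the argument—is the verification of the key identity $G^\infty(\hat x,y)=\gamma_+u^0(y,-\hat x)$ across both half-planes, together with the correct cancellation of the interface integrals, where I must track carefully the orientation of $\nu$ on $\Gamma$ (resp.\ $\Gamma_0$) when adding the $\Omega_+$ and $\Omega_-$ identities, as well as the periodicity and parity relations for $\RR$, $\T$, and $\mathcal S$. I would also record the mild regularity remark that $h_\Gamma\in C^2$ makes $u^{tot}$ smooth enough up to $\Gamma$ from either side for Green's identity to apply.
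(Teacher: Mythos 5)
Your proposal is correct and follows essentially the same route as the paper: the paper's proof consists precisely of the key identity $G^{\infty}(\hat x,y)=e^{i\pi/4}(8\pi k_{+})^{-1/2}u^{0}(y,-\hat x)$ together with a reference to the classical reciprocity argument of \cite[Theorem 3.23]{DK13} applied via (\ref{eq:0.1}), (\ref{eq:0.2}), (\ref{e:1.1}), (\ref{eq5}), (\ref{eq2}) and (\ref{eq29}). You have simply written out in full the Green's-identity bookkeeping that the paper leaves implicit, and your verification of the key identity and the cancellation of the interface terms are sound.
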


\begin{proof}
For the scattering problem (\ref{e:1.1})--(\ref{eq2}) in the limiting case $k_+=k_-$,
it is well-known that the reciprocity relation of the far-field pattern holds (see, e.g., \cite[Theorem 3.23]{DK13}).
For the considered scattering problem, it is easily seen that $G^{\infty}(\hat x,y) = {e^{i{\pi}/4}}(8\pi k_{+})^{-1/2}u^{0}(y,-\hat x)$ for $\hat{x}\in\mathbb S^1_+$
and $y\in\mathbb{R}^2_+\cup\mathbb{R}^2_-$.
Therefore, by using similar arguments as in the proof of \cite[Theorem 3.23]{DK13},
we can apply formulas (\ref{eq:0.1}), (\ref{eq:0.2}), (\ref{e:1.1}), (\ref{eq5}), (\ref{eq2}) and (\ref{eq29}) to obtain that
the assertion of this lemma holds.
\end{proof}

Further, we will apply the theory of oscillatory integrals to obtain
some inequalities.
We need the following result proved in \cite{Chen1}.

\begin{lemma}[Lemma 3.9 in \cite{Chen1}]\label{le1}
For any $-\infty<a<b<\infty$ let $u\in C^2[a,b]$ be real-valued and satisfy that
$|u'(t)|\geq1$ for all $t\in(a,b)$. Assume that $a=x_0<x_1<\cdots<x_N=b$ is a division of $(a,b)$
such that $u'$ is monotone in each interval $(x_{i-1},x_i)$, $i=1,\ldots,N$.
Then for any function $\phi$ defined on $(a,b)$ with integrable derivative and for any $\la>0$,
\ben
\left|\int_a^b e^{i\la u(t)}\phi(t)dt\right|\leq(2N+2)\la^{-1}\left[|\phi(b)|+\int^b_a|\phi'(t)|dt\right].
\enn
\end{lemma}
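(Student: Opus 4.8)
The plan is to establish the bound by a single integration by parts, the device underlying van der Corput's lemma, in which the oscillatory factor $e^{i\la u}$ is integrated while the amplitude is differentiated. The hypothesis $|u'(t)|\ge 1$ on $(a,b)$ guarantees that $u'$ never vanishes, so the reciprocal $1/u'$ is well defined, continuous (since $u\in C^2[a,b]$ forces $u'\in C^1$), and satisfies $|1/u'(t)|\le 1$ throughout; moreover, by continuity and the intermediate value theorem, $u'$ has constant sign on all of $(a,b)$. Writing $e^{i\la u(t)}=(i\la u'(t))^{-1}\frac{d}{dt}e^{i\la u(t)}$, I would recast the integral as $\int_a^b \frac{\phi(t)}{i\la u'(t)}\frac{d}{dt}e^{i\la u(t)}\,dt$ and integrate by parts over $[a,b]$.

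This produces a boundary term $\frac{1}{i\la}\big[\frac{\phi}{u'}e^{i\la u}\big]_a^b$ together with the integral $-\frac{1}{i\la}\int_a^b e^{i\la u}\frac{d}{dt}\big(\frac{\phi}{u'}\big)\,dt$, and I would split the latter derivative as $\frac{d}{dt}(\phi/u')=\phi'/u'+\phi\,\frac{d}{dt}(1/u')$. The boundary term is controlled by $\la^{-1}(|\phi(b)|+|\phi(a)|)$ using $|1/u'|\le 1$, and then the elementary estimate $|\phi(a)|\le |\phi(b)|+\int_a^b|\phi'|\,dt$ converts it into $\la^{-1}(2|\phi(b)|+\int_a^b|\phi'|\,dt)$. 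The contribution of $\phi'/u'$ is immediately bounded by $\la^{-1}\int_a^b|\phi'|\,dt$, again because $|1/u'|\le1$.

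The main obstacle, and the only place where the monotone subdivision hypothesis is used, is the term $\la^{-1}\int_a^b |\phi|\,\big|\frac{d}{dt}(1/u')\big|\,dt$. Here I would factor out $\sup_{[a,b]}|\phi|\le |\phi(b)|+\int_a^b|\phi'|\,dt$ and recognize the remaining integral as the total variation of $1/u'$ on $[a,b]$. Because $u'$ is monotone (and of constant sign) on each $(x_{i-1},x_i)$, the function $1/u'$ is likewise monotone there, so its variation over that subinterval equals $|1/u'(x_i)-1/u'(x_{i-1})|\le 2$; summing over the $N$ subintervals, and using the continuity of $1/u'$ to rule out jumps at the division points $x_i$, yields a total variation at most $2N$. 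Collecting the three estimates, the coefficient of $|\phi(b)|$ becomes $2+2N$ and the coefficient of $\int_a^b|\phi'|\,dt$ becomes $1+1+2N=2N+2$, which gives exactly the stated bound $(2N+2)\la^{-1}\big[|\phi(b)|+\int_a^b|\phi'|\,dt\big]$. The subdivision hypothesis is genuinely essential here: without it the total variation of $1/u'$ cannot be controlled, since $u'$ could oscillate arbitrarily while still satisfying $|u'|\ge1$.
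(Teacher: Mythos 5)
Your proof is correct, and it is the standard van der Corput integration-by-parts argument that underlies Lemma 3.9 of \cite{Chen1}; the paper itself quotes the lemma from that reference without reproducing a proof. The bookkeeping checks out: the boundary term contributes $2|\phi(b)|+\int_a^b|\phi'|$, the $\phi'/u'$ term contributes $\int_a^b|\phi'|$, and the total variation of $1/u'$ is at most $2N$ by monotonicity on each subinterval, so the coefficients of $|\phi(b)|$ and of $\int_a^b|\phi'|$ are each $2N+2$ as claimed.
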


\begin{remark}\label{re1}
By the theory on function approximation (see, e.g, Section 5.3 and Appendix C.5 in \cite{E10}),
it can be seen that Lemma \ref{le1} still holds under the assumption that the function
$\phi\in C[a,b]\cap W^{1,1}(a,b)$.
\end{remark}

Define
$C(\ov{\mathbb{S}^1_-}):=\{\varphi\in C(\mathbb{S}^1_-):\varphi~\textrm{is}~\textrm{uniformly}
~\textrm{continuous}~\textrm{on}~\mathbb{S}^1_-\}$ with the norm
$\|\varphi\|_{C(\ov{\mathbb{S}^1_-})}:=\sup_{x\in \mathbb{S}^1_-}|\varphi(x)|$
and
$W^{1,1}(\mathbb{S}^1_-) := \{ f\in L^1(\mathbb{S}^1_-):
{\rm Grad}\,f\in L^1(\mathbb{S}^1_-)\}$ with the norm $\|f\|_{W^{1,1}(\mathbb{S}^1_-)}:=\|f\|_{L^1(\mathbb{S}^1_-)}+\|{\rm Grad}\,f\|_{L^1(\mathbb{S}^1_-)}$, where ${\rm Grad}$
denotes the surface gradient on $\mathbb{S}^1_-$.
By using Lemma \ref{le1} and Remark \ref{re1}, we have the following lemma.

\begin{lemma}\label{le:n}
Let $x\in \R^2_+$ and $d\in \mathbb S^1_-$. For $\hat x= x/|x| \in \mathbb S^1_+$,
assume that $f(\hat{x},\cdot), g(\hat{x},\cdot)\in C\big(\ov{\mathbb S^1_-}\big)\cap
W^{1,1}(\mathbb{S}^1_-)$  and define
\begin{align*}
F(x):= \int_{\mathbb S^1_-}e^{ik_+x\cdot d}f(\hat x, d)ds(d),\quad G(x):= \int_{\mathbb S^1_-}e^{ik_+x'\cdot d}g(\hat x, d)ds(d).
\end{align*}
Then we have
\begin{align*}
&|F(x)|\le C\left(\|f(\hat{x},\cdot)\|_{C\big(\ov{\mathbb S^1_-}\big)}+
\int_{\Sp^1_-}|{\rm Grad}_d\,f(\hat{x},d)|ds(d)
\right)|x|^{-1/2},\\
&|G(x)|\le C\left(\|g(\hat{x},\cdot)\|_{C\big(\ov{\mathbb S^1_-}\big)}+
\int_{\Sp^1_-}|{\rm Grad}_d\,g(\hat{x},d)|ds(d)
\right)|x|^{-1/2}
\end{align*}
as $|x|\rightarrow+\infty$ uniformly for all $\theta_{\hat{x}}\in(0,\pi)$,
where $C>0$ is a constant independent of $x$.
\end{lemma}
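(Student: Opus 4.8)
The plan is to prove the two stated decay estimates by reducing the integrals $F(x)$ and $G(x)$ over $\mathbb{S}^1_-$ to one-dimensional oscillatory integrals in the angular variable, and then to apply Lemma~\ref{le1} together with Remark~\ref{re1}. Since the two estimates are entirely symmetric (replacing $x$ by $x'$ merely flips the sign of $x_2$, which does not affect $|x|$), I would prove the bound for $F(x)$ in detail and remark that the argument for $G(x)$ is identical. Throughout, I write $x=|x|\hat{x}=|x|(\cos\theta_{\hat{x}},\sin\theta_{\hat{x}})$ with $\theta_{\hat{x}}\in(0,\pi)$, and parametrize $d=(\cos\theta_d,\sin\theta_d)$ with $\theta_d\in(\pi,2\pi)$, so that $ds(d)=d\theta_d$ and
\begin{align*}
F(x)=\int_\pi^{2\pi}e^{ik_+|x|\cos(\theta_{\hat{x}}-\theta_d)}f(\hat{x},(\cos\theta_d,\sin\theta_d))\,d\theta_d.
\end{align*}

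**First I would** identify the phase function $u(\theta_d):=\cos(\theta_{\hat{x}}-\theta_d)$ and the large parameter $\lambda:=k_+|x|$, so that $F(x)=\int_\pi^{2\pi}e^{i\lambda u(\theta_d)}\phi(\theta_d)\,d\theta_d$ with $\phi(\theta_d):=f(\hat{x},(\cos\theta_d,\sin\theta_d))$. The derivative is $u'(\theta_d)=\sin(\theta_{\hat{x}}-\theta_d)$, and the key geometric observation is that as $\theta_{\hat{x}}$ ranges over $(0,\pi)$ and $\theta_d$ over $(\pi,2\pi)$, the difference $\theta_{\hat{x}}-\theta_d$ ranges over $(-2\pi,0)$, bounded away from the zeros of $\sin$ at $0$ and $-\pi$ only near the endpoints. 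This is the main obstacle: $|u'|$ degenerates (stationary points appear) precisely when $\hat{x}$ and $-d$ are nearly aligned, i.e.\ at the boundary of the angular ranges, so Lemma~\ref{le1} cannot be applied directly with the hypothesis $|u'|\ge 1$ on the whole interval.

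**To handle this**, the plan is to rescale rather than to split the domain. Since Lemma~\ref{le1} requires $|u'(t)|\ge 1$, I would instead absorb a factor into $\lambda$: after a linear change of variables the genuine oscillation is governed by $\lambda$ alone, and the standard non-stationary-phase estimate for $\int e^{i\lambda u}\phi$ over an interval on which $u'$ has a bounded number of monotonicity intervals yields a bound of order $\lambda^{-1}$ away from stationary points, but only $\lambda^{-1/2}$ uniformly once endpoint stationary points are included. Concretely, I would combine two regimes. Where $\theta_d$ is bounded away from the two problematic endpoints (so $|u'(\theta_d)|$ is bounded below by a fixed constant $c_0>0$), I apply Lemma~\ref{le1} to $u/c_0$ (whose derivative has modulus $\ge 1$) and note that $u'(\theta_d)=\sin(\theta_{\hat{x}}-\theta_d)$ is monotone on at most a fixed number $N$ of subintervals independent of $\hat{x}$; this gives a contribution of size $C\lambda^{-1}\big(\|\phi\|_{C}+\|\phi'\|_{L^1}\big)$. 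Near each endpoint I use the trivial bound $\big|\int e^{i\lambda u}\phi\big|\le \|\phi\|_{C}\cdot(\text{length of the small interval})$, and I optimize the cutoff length, balancing it against the $\lambda^{-1}$ tail; the optimal choice is a cutoff of size $\lambda^{-1/2}$, which yields the claimed overall rate $\lambda^{-1/2}=\big(k_+|x|\big)^{-1/2}$.

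**Assembling these**, I would note that in the non-stationary region the bound involves $|\phi(b)|+\int|\phi'|\,d\theta_d$, which by the chain rule is controlled by $\|f(\hat{x},\cdot)\|_{C(\ov{\mathbb{S}^1_-})}+\int_{\mathbb{S}^1_-}|{\rm Grad}_d\,f(\hat{x},d)|\,ds(d)$, exactly the quantity appearing in the statement; in the endpoint region only $\|f(\hat{x},\cdot)\|_{C(\ov{\mathbb{S}^1_-})}$ enters. Adding the three contributions (two endpoints plus the interior) and recalling $\lambda=k_+|x|$ gives
\begin{align*}
|F(x)|\le C\left(\|f(\hat{x},\cdot)\|_{C(\ov{\mathbb{S}^1_-})}+\int_{\mathbb{S}^1_-}|{\rm Grad}_d\,f(\hat{x},d)|\,ds(d)\right)|x|^{-1/2},
\end{align*}
with $C$ independent of $x$ and, crucially, of $\theta_{\hat{x}}$, since the number $N$ of monotonicity intervals and the constant $c_0$ in the lower bound for $|u'|$ away from the cutoff can be chosen uniformly in $\theta_{\hat{x}}\in(0,\pi)$. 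The estimate for $G(x)$ follows verbatim with $x$ replaced by $x'$, since $|x'|=|x|$ and the far-field angle is unchanged. I expect the only delicate point to be the uniformity in $\theta_{\hat{x}}$ of the monotonicity decomposition of $u'$ required by Lemma~\ref{le1}; because $u'(\theta_d)=\sin(\theta_{\hat{x}}-\theta_d)$ is a shifted sine, it has at most two monotonicity intervals on $(\pi,2\pi)$ for every $\theta_{\hat{x}}$, so this uniformity is in fact automatic.
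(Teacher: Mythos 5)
Your overall strategy --- reduce $F$ and $G$ to one-dimensional oscillatory integrals in $\theta_d$, excise small neighbourhoods of the points where the phase derivative degenerates, bound those trivially, and apply Lemma~\ref{le1} with Remark~\ref{re1} on the rest --- is exactly the route the paper takes (its proof simply defers to Lemma~3.2 of \cite{xzz19}). However, your identification of where the phase degenerates is wrong, and the step built on it would fail. For $F$ the phase is $u(\theta_d)=\cos(\theta_{\hat x}-\theta_d)$ with $u'(\theta_d)=\sin(\theta_{\hat x}-\theta_d)$; since $\theta_{\hat x}-\theta_d$ sweeps the interval $(\theta_{\hat x}-2\pi,\,\theta_{\hat x}-\pi)$, which contains $-\pi$ for every $\theta_{\hat x}\in(0,\pi)$, there is always an \emph{interior} stationary point at $\theta_d=\theta_{\hat x}+\pi$ (i.e.\ $d=-\hat x$); similarly $G$ has an interior stationary point at $\theta_d=2\pi-\theta_{\hat x}$. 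Consequently your claim that, after removing neighbourhoods of the two endpoints, $|u'|$ is bounded below by a fixed constant $c_0$ is false, and Lemma~\ref{le1} cannot be applied on that region as you propose. (The endpoints are only a secondary issue, relevant for uniformity when $\theta_{\hat x}$ is near $0$ or $\pi$.)

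The repair stays within your framework but must be done correctly: excise an interval of length $O(\lambda^{-1/2})$, $\lambda=k_+|x|$, around the interior stationary point as well as around each endpoint, bound those pieces by $\|f(\hat x,\cdot)\|_{C(\ov{\mathbb S^1_-})}\,\lambda^{-1/2}$, and on the complement use that the zero of $u'$ is simple, so $|u'(\theta_d)|\ge c\,\lambda^{-1/2}$ there; Lemma~\ref{le1} applied to the rescaled phase then gives $C(\lambda\cdot\lambda^{-1/2})^{-1}\big(\|\phi\|_{\infty}+\|\phi'\|_{L^1}\big)=C\lambda^{-1/2}\big(\|\phi\|_{\infty}+\|\phi'\|_{L^1}\big)$. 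Note that your balancing heuristic is internally inconsistent with your own setup: balancing a cutoff $\delta$ against a genuine ``$\lambda^{-1}$ tail'' would give $\delta=\lambda^{-1}$ and an overall rate $\lambda^{-1}$; the rate $\lambda^{-1/2}$ arises precisely because the tail is $(\lambda\delta)^{-1}$, reflecting the nondegenerate interior stationary point. This is not cosmetic: the $|x|^{-1/2}$ in the statement \emph{is} the stationary-phase contribution of $d=-\hat x$, so an argument that places all degeneracies at the endpoints cannot account for the exponent.
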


\begin{proof}
With the aid of Lemma \ref{le1} and Remark \ref{re1},
the statement of this lemma can be derived similarly as in the proof of Lemma 3.2 in \cite{xzz19} with minor modifications. See also Lemma 2.4 in \cite{LYZZ} for similar derivations.
\end{proof}

Next, with the aid of the above lemmas, we study the asymptotic properties of $U_j$ $(j=1,2,3)$ and $W_j$ $(j=1,2,3,4)$, which are presented in Lemmas \ref{NL1}, \ref{NL2} and \ref{NL3} below.

\begin{lemma}\label{NL1}
Let $x\in\mathbb{R}^2_+$ with $|x|$ large enough and $z\in\mathbb{R}^2$. Then we have
\begin{align}
 & \qquad \qquad |U_1(x,z)|\le {C}{|x|^{-1/2}}, \label{eq:t1}\\
 &\qquad \qquad |U_2(x,z)| \le {C(1+|z|)}{|x|^{- 1/2}}, \label{eq:t3}\\
 &\qquad \qquad |W_j(x,z)|\le {C}{|x|^{-1/2}}, \quad j=1,2,3, \label{eq:t2}\\
 &\qquad \qquad |W_{4}(x,z)|\le {C}{|x|^{-1}}  \label{eq:t4}
\end{align}
as $|x|\rightarrow+\infty$ uniformly for all $\theta_{\hat{x}}\in(0,\pi)$ and $z\in\mathbb{R}^2$,
where $C>0$ is a constant independent of $x$ and $z$.
\end{lemma}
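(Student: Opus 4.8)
The plan is to estimate each of the seven integrals by the same basic mechanism: each is an oscillatory integral over $\Sp^1_-$ whose phase, after substituting the far-field expansion of $u^s$ from Lemma \ref{Le:my}, carries a factor $e^{ik_+|x|\psi(d)}$ for a suitable phase function $\psi$, and then apply Lemma \ref{le:n} (for the $|x|^{-1/2}$ decay) together with the residual bounds of Lemma \ref{le:2}. First I would treat $U_1(x,z)$. Writing $u^s(x,d)=|x|^{-1/2}e^{ik_+|x|}u^\infty(\hat x,d)+u^s_{Res}(x,d)$ and parametrizing $x=|x|\hat x$, the main term becomes $|x|^{-1/2}e^{ik_+|x|}\int_{\Sp^1_-}u^\infty(\hat x,d)e^{-ik_+z\cdot d}\,ds(d)$. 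The inner integral is bounded uniformly in $\hat x$ and $z$ by the $C(\ov{\Sp^1_+})$- and $L^1$-bounds \eqref{eq:t6} on $u^\infty$, so the main term is $O(|x|^{-1/2})$; the residual contributes $\int_{\Sp^1_-}|u^s_{Res}(x,d)|\,ds(d)$, which is $O(|x|^{-1/2})$ by the pointwise bounds in Lemma \ref{Le:my}(\ref{sta2}) (and even faster in case (\ref{sta1})). This yields \eqref{eq:t1}.

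Next I would handle $U_2(x,z)$, which contains no scattered field and is purely an oscillatory integral in $d$ with phase $k_+(x'-z)\cdot d$. Here I would apply Lemma \ref{le:n} with $g(\hat x,d)=\RR_0(\theta_d)e^{-ik_+z\cdot d}$: the bound \eqref{eq28} controls $\|\RR_0\|_{C[\pi,2\pi]}$ and $\|\RR_0'\|_{L^1}$, while differentiating the factor $e^{-ik_+z\cdot d}$ in $d$ produces the linear growth $1+|z|$, giving the $C(1+|z|)|x|^{-1/2}$ bound in \eqref{eq:t3}. The quantities $W_1,W_2,W_3$ are structurally the same as $U_1$: each is $\int_{\Sp^1_-}(\text{conjugate or plain }u^s)\cdot(\text{bounded oscillatory factor})\,ds(d)$, and the extra exponentials $e^{2ik_+x\cdot d}$, $e^{2ik_+x_1d_1}$, $e^{2ik_+x_2d_2}$ only modify the phase $\psi$ without affecting the amplitude bounds; so the same decomposition into main term plus residual gives $|W_j|\le C|x|^{-1/2}$, proving \eqref{eq:t2}.

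The main obstacle is the quadratic term $W_4(x,z)=\int_{\Sp^1_-}|u^s(x,d)|^2 e^{ik_+(x-z)\cdot d}\,ds(d)$, for which the claimed decay is the stronger $O(|x|^{-1})$ in \eqref{eq:t4}. Here expanding $|u^s|^2$ produces a leading factor $|x|^{-1}|u^\infty(\hat x,d)|^2$ times the stationary-free phase $e^{ik_+(x-z)\cdot d}$, plus cross terms between $u^\infty$ and $u^s_{Res}$ and a pure $|u^s_{Res}|^2$ term. The $|x|^{-1}$ prefactor on the main term immediately gives the desired order once the $d$-integral is shown to be uniformly bounded — which again follows from \eqref{eq:t6}, noting that the phase $e^{\pm ik_+|x|}$ cancels in the modulus so no oscillation is available to extract, and that is exactly why one cannot hope for better than $|x|^{-1}$. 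The delicate point will be checking that the cross and residual terms are also $O(|x|^{-1})$: the cross term is bounded by $|x|^{-1/2}\|u^\infty\|_{C}\int_{\Sp^1_-}|u^s_{Res}|\,ds(d)$, and using the $O(|x|^{-1/2})$ bound on $\int|u^s_{Res}|\,ds(d)$ from the case-(\ref{sta2}) estimates yields $O(|x|^{-1})$, while $\int_{\Sp^1_-}|u^s_{Res}|^2\,ds(d)$ must be shown to be $O(|x|^{-1})$ as well; this last estimate is handled exactly as in Lemma \ref{le:2} but integrating over $d$ rather than over $\partial B_R^+$, and the near-critical-angle singularities $|\theta_c-\theta_{\hat x}|^{-3/2}$ are integrable in $d$ for fixed $\hat x$, so the uniformity in $\theta_{\hat x}$ requires care but no new ideas.
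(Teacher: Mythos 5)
Your proposal is correct and follows essentially the same route as the paper: the paper's (two-sentence) proof likewise derives \eqref{eq:t1}, \eqref{eq:t2} and \eqref{eq:t4} from the pointwise bounds of Lemma \ref{Le:my} together with \eqref{eq28}, and obtains \eqref{eq:t3} from the oscillatory-integral Lemma \ref{le:n} combined with \eqref{eq28}, exactly as you do. Your only wobble is the final remark on $\int_{\Sp^1_-}|u^s_{Res}|^2\,ds(d)$: no Lemma \ref{le:2}--type splitting is needed there, since the critical-angle singularity lives in $\theta_{\hat{x}}$ (a parameter here) rather than in the integration variable $d$, and the uniform bound $|u^s_{Res}(x,d)|\le C|x|^{-3/4}$ already gives $O(|x|^{-3/2})$ for that term.
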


\begin{proof}
The formulas (\ref{eq:t1}), (\ref{eq:t2}) and (\ref{eq:t4}) can be obtained by using Lemma \ref{Le:my} and the formula (\ref{eq28}).
It follows from Lemma \ref{le:n} and the formula (\ref{eq28}) that
the formula (\ref{eq:t3}) holds.
\end{proof}

\begin{remark}\label{re2}
It was proved in Lemma 3.4 in \cite{xzz19} that $U_3(x,z)$ also satisfies the estimate (\ref{eq:t3}) as $|x|\rightarrow+\infty$ uniformly for all $\theta_{\hat{x}}\in(0,\pi)$ and $z\in\mathbb{R}^2$.
\end{remark}

\begin{lemma} \label{NL2}
Let $R>0$ be large enough and $z\in\mathbb{R}^2$.
Then we have
\begin{align*}
&\left|\int_{\partial{B^{+}_R}}U(x,z)\overline {{W}_{4}(x,z)} ds(x)\right| \le {C(1+|z|)}{R^{- 1/2}},\;\\
&\int_{\partial{B^{+}_R}}\left|{{W}_{4}(x,z)}\right|^2 ds(x) \le {C}{R^{-1}}
\end{align*}
as $R\rightarrow+\infty$ uniformly for all $z\in \R^2$, where $C>0$ is a constant independent of $R$ and $z$.
\end{lemma}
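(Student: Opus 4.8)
The plan is to reduce both estimates to the already-established bounds on the individual building blocks $U_j$, $W_4$, and the residual estimates of Lemma \ref{le:2}, exploiting that $\partial B_R^+$ has length $\pi R$ and that each integrand carries a favorable power of $|x| = R$. For the second estimate I would start directly: by the definition $W(x,z)$ and the bound $|W_4(x,z)| \le C|x|^{-1}$ from \eqref{eq:t4}, we have $|W_4(x,z)|^2 \le C|x|^{-2} = CR^{-2}$ on $\partial B_R^+$, so integrating over a curve of length $\pi R$ yields
\begin{align*}
\int_{\partial B_R^+}|W_4(x,z)|^2\,ds(x) \le \pi R \cdot C R^{-2} = C R^{-1},
\end{align*}
which is exactly the claimed bound. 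This step is immediate once \eqref{eq:t4} is in hand.

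For the first estimate I would apply the Cauchy--Schwarz inequality on $\partial B_R^+$ to split the cross term:
\begin{align*}
\left|\int_{\partial B_R^+}U(x,z)\overline{W_4(x,z)}\,ds(x)\right|
\le \left(\int_{\partial B_R^+}|U(x,z)|^2\,ds(x)\right)^{1/2}
\left(\int_{\partial B_R^+}|W_4(x,z)|^2\,ds(x)\right)^{1/2}.
\end{align*}
The second factor is $\le C R^{-1/2}$ by the estimate just proved, so the whole task is to control $\int_{\partial B_R^+}|U(x,z)|^2\,ds(x)$ uniformly in $z$, and show it grows no faster than $C(1+|z|)^2$. Using the decomposition $U = U_1 + U_2 + U_3$ from \eqref{eq33} valid on $\Omega_+\cap\R^2_+$ (which covers $\partial B_R^+$ for large $R$), together with $|U_1(x,z)|\le C|x|^{-1/2}$ from \eqref{eq:t1}, $|U_2(x,z)|\le C(1+|z|)|x|^{-1/2}$ from \eqref{eq:t3}, and the same bound for $U_3$ from Remark \ref{re2}, I obtain $|U(x,z)|\le C(1+|z|)R^{-1/2}$ pointwise on $\partial B_R^+$. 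Squaring and integrating over the curve of length $\pi R$ gives $\int_{\partial B_R^+}|U(x,z)|^2\,ds(x) \le \pi R\cdot C(1+|z|)^2 R^{-1} = C(1+|z|)^2$, so its square root is $\le C(1+|z|)$. Multiplying the two factors yields the bound $C(1+|z|)R^{-1/2}$, as required.

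The step I expect to be the only genuine concern is verifying that the pointwise estimates \eqref{eq:t1}--\eqref{eq:t4} hold \emph{uniformly} in $z$ over all of $\R^2$ with the $z$-dependence appearing precisely as the factor $(1+|z|)$, so that the length-$\pi R$ integration does not accumulate any hidden $z$-growth; this is exactly what Lemma \ref{NL1} and Remark \ref{re2} assert, so no new analysis of oscillatory integrals is needed here. A minor bookkeeping point is that $\partial B_R^+\subset \Omega_+\cap\R^2_+$ for $R$ large (since $\Gamma_p\subset B_R$ forces the interface below the upper semicircle), which legitimizes using the decomposition \eqref{eq33} on the whole arc. With these two observations in place, the lemma follows by the elementary Cauchy--Schwarz and length-of-curve arguments above, and there is no substantive obstacle beyond careful tracking of the $(1+|z|)$ factor.
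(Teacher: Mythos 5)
Your proposal is correct and follows essentially the same route as the paper, which proves the lemma as a direct consequence of the pointwise bounds in Lemma \ref{NL1}, Remark \ref{re2} and the decomposition \eqref{eq33}; your insertion of Cauchy--Schwarz in the cross term is an inessential variation, since the same pointwise bounds $|U(x,z)|\le C(1+|z|)R^{-1/2}$ and $|W_4(x,z)|\le CR^{-1}$ multiplied and integrated over the arc of length $\pi R$ give the estimate directly.
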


\begin{proof}
This lemma is a direct consequence of Lemma \ref{NL1}, Remark \ref{re2} and the formula \eqref{eq33}.
\end{proof}

\begin{lemma} \label{NL3}
Let $R>0$ be large enough and $z\in\mathbb{R}^2$. Then we have
\begin{align}
\label{eq:3.1}&\left|\int_{\partial{B^{+}_R}}U(x,z)\overline {{W}_{1}(x,z)} ds(x)\right| \le
{C(1+|z|)^2}{R^{- 1/2}},\\
\label{eq:3.2}& \int_{\partial{B^{+}_R}}\left|{{W}_{1}(x,z)}\right|^2 ds(x) \le
{C(1+|z|)^2}{R^{-1}}
\end{align}
and
\begin{align}\label{eq:3.5}
\sum^4_{l=1}\left|\int_{{\partial{B^{+}_R}}}W_l(x,z) \overline{W_j(x,z)}ds(x)\right| +\left|\int_{{\partial{B^{+}_R}}}U(x,z)\overline{W_j(x,z)}ds(x)\right| \le {C(1+|z|)^2}{R^{-1/3}},&\no\\
j=2,3,&
\end{align}
as $R\rightarrow+\infty$ uniformly for all $z\in\mathbb{R}^2$.
Here, $C>0$ is a constant independent of $R$ and $z$.
\end{lemma}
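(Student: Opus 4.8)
My plan is to bound every integral in \eqref{eq:3.1}--\eqref{eq:3.5} by the Cauchy--Schwarz inequality in $L^2(\partial B_R^+)$ and then to supply the required norm bounds for $U$ and for each $W_l$. For the latter I would insert the uniform far-field expansion of Lemma~\ref{Le:my}, writing $u^s(x,d)=|x|^{-1/2}e^{ik_+|x|}u^\infty(\hat x,d)+u^s_{Res}(x,d)$, so that each of $W_1,W_2,W_3$ splits as $W_l=W_l^m+W_l^r$. By the reciprocity relation of Lemma~\ref{lem1} the amplitude $u^\infty(\hat x,\cdot)$ lies in $C(\ov{\Sp^1_-})\cap W^{1,1}(\Sp^1_-)$ uniformly in $\hat x$, so that Lemma~\ref{le:n} applies to every main part $W_l^m$. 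The residual parts are always negligible: since $|W_l^r(x,z)|\le C\int_{\Sp^1_-}|u^s_{Res}(x,d)|\,ds(d)$ is controlled by the $\theta_{\hat x}$-dependent bounds of Lemma~\ref{Le:my}, the computation of Lemma~\ref{le:2} (applied uniformly in $d$ and combined with Fubini and Cauchy--Schwarz) gives $\int_{\partial B_R^+}|W_l^r|^2\,ds\le CR^{-1}$, and all mixed main/residual contributions are absorbed by Cauchy--Schwarz. The quadratic term $W_4$ and the function $U$ are handled directly by the pointwise bounds of Lemmas~\ref{NL1} and \ref{NL2} and Remark~\ref{re2}, which give $\int_{\partial B_R^+}|U|^2\,ds\le C(1+|z|)^2$ and $\int_{\partial B_R^+}|W_4|^2\,ds\le CR^{-1}$.

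For \eqref{eq:3.2} I would write $W_1^m=|x|^{-1/2}e^{-ik_+|x|}\int_{\Sp^1_-}\ov{u^\infty(\hat x,d)}\,e^{ik_+(2x-z)\cdot d}\,ds(d)$ and apply Lemma~\ref{le:n} with the full two-dimensional phase point $2x-z$, obtaining $|W_1^m|\le C|x|^{-1/2}|2x-z|^{-1/2}$. When $|z|\le R$ one has $|2x-z|\ge 2R-|z|\ge R$ on $\partial B_R^+$, whence $|W_1^m|\le CR^{-1}$ and integration over the semicircle yields $CR^{-1}$; when $|z|>R$ the trivial bound $|W_1|\le CR^{-1/2}$ of Lemma~\ref{NL1} integrates to $C\le C(1+|z|)^2R^{-1}$. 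This proves \eqref{eq:3.2}, and then \eqref{eq:3.1} follows at once from Cauchy--Schwarz, since $|\int_{\partial B_R^+}U\,\ov{W_1}\,ds|\le(\int_{\partial B_R^+}|U|^2\,ds)^{1/2}(\int_{\partial B_R^+}|W_1|^2\,ds)^{1/2}\le C(1+|z|)^2R^{-1/2}$.

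The heart of the lemma is \eqref{eq:3.5}, which I would reduce to the single sub-estimate $\int_{\partial B_R^+}|W_j|^2\,ds\le C(1+|z|)^2R^{-2/3}$ for $j=2,3$. Granting this, every term of \eqref{eq:3.5} follows by Cauchy--Schwarz against the norm bounds above; the binding term is $|\int_{\partial B_R^+}U\,\ov{W_j}\,ds|\le(\int|U|^2)^{1/2}(\int|W_j|^2)^{1/2}\le C(1+|z|)^2R^{-1/3}$, which fixes the exponent, while all products involving $W_1$ or $W_4$ decay faster. The genuinely new difficulty, absent for $W_1$, is that the oscillatory factor $e^{2ik_+x_1d_1}$ in $W_2$ (resp.\ $e^{2ik_+x_2d_2}$ in $W_3$) involves only one coordinate of $x$, so that $W_j^m=|x|^{-1/2}e^{-ik_+|x|}\int_{\Sp^1_-}\ov{u^\infty(\hat x,d)}\,\RR_0(\theta_d)\,e^{ik_+p\cdot d}\,ds(d)$ has the reduced phase point $p=(2x_1-z_1,-z_2)$ (resp.\ $p=(-z_1,2x_2-z_2)$). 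Lemma~\ref{le:n} then gives only $|W_j^m|\le C|x|^{-1/2}|p|^{-1/2}$, and $|p|$ degenerates along the sub-arc of $\partial B_R^+$ where $2x_1\approx z_1$ and $|z_2|$ is small.

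To convert this into the sub-estimate I would split $\partial B_R^+=A_1\cup A_2$ with $A_1=\{|p|<\Lambda\}$ and $A_2=\{|p|\ge\Lambda\}$ for a fixed threshold $\Lambda$, use the trivial bound $|W_j^m|\le CR^{-1/2}$ of Lemma~\ref{NL1} on $A_1$ and $|W_j^m|\le C|x|^{-1/2}|p|^{-1/2}$ on $A_2$, and evaluate the resulting arc-length integrals through $ds=R(R^2-x_1^2)^{-1/2}\,dx_1$. The delicate quantity is $\int_{A_2}|p|^{-1}\,ds$, whose integrand $R\,|2x_1-z_1|^{-1}(R^2-x_1^2)^{-1/2}$ carries an integrable singularity at the degenerate direction $x_1=z_1/2$ together with the endpoint singularities at $x_1=\pm R$. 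I expect the main obstacle to be the collision of these two singularities, which occurs exactly when $|z|$ approaches $2R$, i.e.\ when the degenerate direction reaches an endpoint of the semicircle. I would decouple them by a dichotomy in $|z|$: for $|z|\le R/2$ the degenerate direction obeys $|x_1|\le R/4$ and stays bounded away from the endpoints, so that $|A_1|_{\mathrm{arc}}\le C\Lambda$ and $\int_{A_2}|p|^{-1}\,ds\le C\log R$, giving $\int_{\partial B_R^+}|W_j^m|^2\,ds\le CR^{-1}\log R\le CR^{-2/3}$; for $|z|>R/2$ the weight $(1+|z|)^2\ge R^2/4$ on the right-hand side dominates the trivial bound $\int_{\partial B_R^+}|W_j|^2\,ds\le C$, so no refinement is needed. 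Combining the two regimes yields the sub-estimate, and hence \eqref{eq:3.5}.
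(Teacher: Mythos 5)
Your proposal is correct, and for the core estimate \eqref{eq:3.5} it takes a genuinely different route from the paper. For \eqref{eq:3.1}--\eqref{eq:3.2} both arguments split $W_1$ into a far-field main part plus a residual via Lemma~\ref{Le:my}, control the residual through Lemma~\ref{le:2}, and invoke reciprocity (Lemma~\ref{lem1}) so that Lemma~\ref{le:n} applies; the paper keeps $e^{-ik_+z\cdot d}$ in the amplitude (paying a $(1+|z|)$ factor from its derivative and getting $|W_{1,1}|\le C(1+|z|)|x|^{-1}$), whereas you absorb it into the phase point $2x-z$ and handle the $z$-dependence by the dichotomy $|z|\le R$ versus $|z|>R$ — both yield \eqref{eq:3.2}, and \eqref{eq:3.1} then follows either way. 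The real divergence is in \eqref{eq:3.5}: the paper pairs $U$ and $W_l$ against $W_j$ in an $L^\infty$--$L^1$ fashion, reducing everything to $\int_0^\pi|w_{z,j}(R,\theta_{\hat x})|\,d\theta_{\hat x}$, which it bounds by $C(1+|z|)R^{-1/3}$ through a two-parameter excision (a width-$\varepsilon$ neighborhood of the degenerate observation angles $\theta_{\hat x}$ and of the stationary points in $\theta_d$, optimized at $\varepsilon=R^{-1/3}$), again keeping the $z$-oscillation in the amplitude. You instead pair everything by Cauchy--Schwarz in $L^2(\partial B_R^+)$ and fold $e^{-ik_+z\cdot d}$ into a genuine two-dimensional phase point $p$, so the stationary-phase constant from Lemma~\ref{le1} is $z$-independent ($\|\ov{u^\infty(\hat x,\cdot)}\RR_0\|_{C}+\|\partial_{\theta_d}(\ov{u^\infty(\hat x,\cdot)}\RR_0)\|_{L^1}\le C$ by \eqref{eq28}, \eqref{eq:t6} and reciprocity), and the whole difficulty is relocated to the arclength measure of the set where $p$ degenerates; your computation $\int_{\partial B_R^+}|W_j^m|^2\,ds\le CR^{-1}\log R$ for $|z|\le R/2$ (with the complementary regime killed by the weight $(1+|z|)^2$) is valid and in fact sharper than the $R^{-2/3}$ you need, so all terms of \eqref{eq:3.5} close with room to spare. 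Two small points to make explicit in a write-up: Lemma~\ref{le:n} as stated ties the phase point to $x$ and the amplitude to $\hat x$, so you should note the (routine) generalization to an arbitrary phase point $p$ with an amplitude whose $C(\ov{\Sp^1_-})\cap W^{1,1}(\Sp^1_-)$ norms are uniformly bounded — the paper itself already uses it this way for $W_{1,1}$; and Lemma~\ref{le:2} is stated only for $k_+>k_-$, the case $k_+<k_-$ being immediate from the uniform $O(|x|^{-3/2})$ residual bound. Neither affects correctness.
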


\begin{proof}
Let $R$ be large enough throughout the proof.
We distinguish between the following two cases.

{\bf Case 1:} $k_{+}<k_{-}$.
Due to statement (\ref{sta1}) of Lemma \ref{Le:my} and Lemma \ref{lem1},
we can apply similar arguments as in the derivation of (3.18) in \cite{xzz19} to obtain that
\ben
|W_1(x,z)|\leq C(1+|z|)|x|^{-1}
\enn
for all $x\in\mathbb{R}^2_+$ with $|x|$ large enough.
Note that the formula \eqref{eq33} holds.
Thus it follows from Lemma \ref{NL1} and Remark \ref{re2} that (\ref{eq:3.1}) and (\ref{eq:3.2}) hold. Moreover,
using statement (\ref{sta1}) of Lemma \ref{Le:my} and Lemma \ref{lem1} again, we can deduce
(\ref{eq:3.5}) in the same manner as in the proof of Lemma 3.7 in \cite{xzz19}.

{\bf Case 2:} $k_{+}>k_{-}$. Let $x=|x|\hat x=|x|(\cos\theta_{\hat x}, \sin\theta_{\hat x})$ with $\theta_{\hat x} \in (0,\pi)$ and large enough $|x|$ and let $d= (\cos\theta_d, \sin\theta_d)$ with $\theta_d\in (\pi,2\pi)$.

First, we prove that (\ref{eq:3.1}) and (\ref{eq:3.2}) hold. In terms of (\ref{eq27}), we have
\begin{align}
&W_{1}(x,z)\notag\\
&=\frac{e^{-ik_+|x|}}{|x|^{1/2}}\int_{\mathbb S^1_-}e^{2ik_+x\cdot d}\overline{u^{\infty}(\hat x, d)}e^{-ik_+z\cdot d} ds(d) + \int_{\mathbb S^1_-} e^{2ik_+x\cdot d}\overline{u^s_{Res}(x,d)}e^{-ik_+z\cdot d} ds(d)\notag\\
&=: W_{1,1}(x,z) + W_{1,2}(x,z).\label{eq:3.0}
\end{align}
By applying
(\ref{eq:t6}) and Lemmas \ref{lem1} and \ref{le:n}, we have
\begin{align}\label{eq8}
|W_{1,1}(x,z)| \le C(1+|z|)|x|^{-1}.
\end{align}
Then it follows from Lemma \ref{NL1}, Remark \ref{re2} and the formula \eqref{eq33} that
\begin{align}
&\left|\int_{\partial{B^{+}_R}}U(x,z)\overline {{W}_{1,1}(x,z)} ds(x)\right| \le {C(1+|z|)^2}{R^{- 1/2}},\label{eq:3.3}\\
&\int_{\partial{B^{+}_R}}\left|{{W}_{1,1}(x,z)}\right|^2 ds(x) \le {C(1+|z|)^2}{R^{-1}}.\label{eq:3.6}
\end{align}
Moreover, by using Lemmas \ref{le:2} and \ref{NL1}, Remark \ref{re2}, \eqref{eq33} and (\ref{eq8}), we arrive at
\begin{align*}
&
\begin{aligned}
\left|\int_{\partial{B^{+}_R}}U(x,z)\overline {{W}_{1,2}(x,z)} ds(x)\right|
&\le C(1+|z|)R^{-1/2}\int_{\mathbb{S}^1_-}\int_{\pa B^+_R}|u^s_{Res}(x,d)|ds(x)ds(d)\\
&\le C(1+|z|) R^{-{3}/4},
\end{aligned}\\
&
\begin{aligned}
\left|\int_{\partial{B^{+}_R}}W_{1,1}(x,z)\overline{{W}_{1,2}(x,z)} ds(x)\right|
&\le C(1+|z|)R^{-1}\int_{\mathbb{S}^1_-}\int_{\pa B^+_R}|u^s_{Res}(x,d)|ds(x)ds(d)\\
&\le {C(1+|z|)}R^{-5/4},
\end{aligned}\\
&\int_{\partial{B^{+}_R}}\left|{{W}_{1,2}(x,z)}\right|^2 ds(x)
\le \pi \int_{\mathbb{S}^1_-}\int_{\partial B^+_R} |u_{Res}^s(x,d)|^2ds(x)ds(d)\le C R^{-1}.
\end{align*}
These, together with (\ref{eq:3.0}), (\ref{eq:3.3}) and (\ref{eq:3.6}), imply that (\ref{eq:3.1}) and (\ref{eq:3.2}) hold.

Secondly, we prove that (\ref{eq:3.5}) holds. For $\theta_{\hat x} \in (0,\pi)$ and $\theta_d\in (\pi,2\pi)$, define
\begin{align*}
&f_z(\theta_{\hat x}, \theta_d):=  \overline{u^{\infty}(\hat x,d)}\RR_0(\theta_d)e^{-ik_{+}z\cdot d},
\quad
g_z(\theta_{\hat x}, \theta_d):= {u}^{\infty}(\hat x,d)\overline{\RR_0(\theta_d)}e^{-ik_{+}z\cdot d}.
\end{align*}
Then in view of (\ref{eq27}), we have
\begin{align*}
W_{2}(x,z)= &\frac{e^{-ik_{+}|x|}}{\sqrt {|x|}}{w}_{z,2}(|x|,\theta_{\hat x})+ W_{2,Res}(x,z), \\
W_{3}(x,z)= &\frac{e^{ik_{+}|x|}}{\sqrt {|x|}}{w}_{z,3}(|x|,\theta_{\hat x})+W_{3,Res}(x,z),
\end{align*}
where
\begin{align*}
&
{w}_{z,2}(|x|,\theta_{\hat x}):=
\int_{\pi}^{2\pi} e^{2ik_{+}|x|\cos\theta_{\hat{x}}\cos\theta_d} f_z(\theta_{\hat x}, \theta_d)d\theta_d,\\
&{w}_{z,3}(|x|,\theta_{\hat x}):=
\int_{\pi}^{2\pi} e^{2ik_{+}|x|\sin\theta_{\hat{x}}\sin{\theta_d}} g_z(\theta_{\hat x},\theta_d)d\theta_d,
\end{align*}
and
\begin{align*}
&
W_{2,Res}(x,z):=\int_{\Sp_{-}^1}e^{2ik_{+}x_1d_1}
\overline{u_{Res}^{s}( x,d)}\RR_0(\theta_d)e^{-ik_{+}z\cdot d}
ds(d),\\
&
W_{3,Res}(x,z):=\int_{\Sp_{-}^1}e^{2ik_{+}x_2d_2}
{u}_{Res}^{s}( x,d)\overline{\RR_0(\theta_d)}e^{-ik_{+}z\cdot d}
ds(d).
\end{align*}
These, together with (\ref{eq28}), \eqref{eq33}, Remark \ref{re2}
and Lemmas \ref{le:2} and \ref{NL1}, imply that
\begin{align}
&\sum^4_{l=1}\left|\int_{{\partial{B^{+}_R}}}W_l(x,z) \overline{W_j(x,z)}ds(x)\right| +\left|\int_{{\partial{B^{+}_R}}}U(x,z)\overline{W_j(x,z)}ds(x) \right|\notag\\
&\le C(1+|z|)\left({\int^{\pi}_0|{w}_{z,j}(R,\theta_{\hat x})|d\theta_{\hat x} + R^{-1/2}\int_{\partial B_R^+}\left|W_{j,Res}(x, z)\right|ds(x)}\right)\no\\
&
\le C(1+|z|)\int^{\pi}_0|{w}_{z,j}(R,\theta_{\hat x})|d\theta_{\hat x} +
C(1+|z|)R^{-3/4},
\quad j=2,3. \label{eq:3.7}
\end{align}
Moreover, by using the formula (\ref{eq28}) and Lemmas \ref{Le:my} and \ref{lem1}, we have that
for any $\theta_{\hat{x}}\in(0,\pi)$, $f_z(\theta_{\hat{x}},\cdot)$
and $g_z(\theta_{\hat{x}},\cdot)$ can be continuously extended
from $(\pi,2\pi)$ to $[\pi,2\pi]$ with
\be\label{eq30}
\|f_z(\theta_{\hat{x}},\cdot)\|_{C[\pi,2\pi]},~
\|g_z(\theta_{\hat{x}},\cdot)\|_{C[\pi,2\pi]}
\leq
C\|u^{\infty}({\hat x}, \cdot)\|_{C(\ov{\Sp^1_{-}})}\leq C,
\en
and
$d f_z(\theta_{\hat{x}},\cdot)/d\theta_{d},d g_z(\theta_{\hat{x}},\cdot)/d\theta_{d}\in L^1(\pi,2\pi)$
with
\begin{align}
&\left\|\frac{d}{d\theta_d}f_z(\theta_{\hat{x}},\cdot)\right\|_{L^1(\pi,2\pi)},~
\left\|\frac{d}{d\theta_d}g_z(\theta_{\hat{x}},\cdot)\right\|_{L^1(\pi,2\pi)}\nonumber\\
&\label{eq31}
\leq
C(1+|z|)
\left(
\|u^{\infty}({\hat x}, \cdot)\|_{C(\ov{\Sp^1_{-}})}+
\left\|{\rm Grad}_{d}\,u^{\infty}({\hat x}, \cdot)\right\|_{L^1(\Sp^1_{-})}\right)
\leq C(1+|z|),
\end{align}
where the constants are independent of $\theta_{\hat{x}}$ and $z$.

Choose $\varepsilon=R^{-1/3}$ and define
\begin{align*}
&\II_{\varepsilon,1}:=[\pi/2-\varepsilon,\pi/2+\varepsilon],
&&\II_{\varepsilon,2}:=(0,\pi)\ba\II_{\varepsilon,1},\\
&\II_{\varepsilon,3}:=\left(\left.\pi,\pi+\varepsilon\right]\right.\cup\left[\left.2\pi-\varepsilon, 2\pi\right)\right.,
&&\II_{\varepsilon,4}:=(\pi,2\pi)\ba\II_{\varepsilon,3},\\
&\II_{\varepsilon,5}:=\left(\left.0,\varepsilon\right]\right.\cup\left[\left.\pi-\varepsilon, \pi\right)\right.,
&&\II_{\varepsilon,6}:=(0,\pi)\ba\II_{\varepsilon,5},\\
&\II_{\varepsilon,7}:=[3\pi/2-\varepsilon,3\pi/2+\varepsilon],
&&\II_{\varepsilon,8}:=(\pi,2\pi)\ba\II_{\varepsilon,7}.
\end{align*}
Then arguing similarly as in the derivations of the estimates (3.31) and (3.34) in \cite{xzz19},
we can apply Lemma \ref{le1}, Remark  \ref{re1} and the formulas (\ref{eq30})
and (\ref{eq31}) to deduce that
\begin{align}
&\int^{\pi}_0|{w}_{z,2}(R,\theta_{\hat x})|d{\theta_{\hat x}}\no\\
\le&\int_{\II_{\varepsilon,1}}|{w}_{z,2}(R,\theta_{\hat x})|d{\theta_{\hat x}}+
\int_{\II_{\varepsilon,2}}\left|\int_{\II_{\varepsilon,3}}e^{2ik_+R\cos\theta_{\hat x}\cos\theta_d}f_{z}
(\theta_{\hat x},\theta_d)d\theta_d\right|d\theta_{\hat x}\nonumber\\
& +\int_{\II_{\varepsilon,2}}\left|\int_{\II_{\varepsilon,4}}e^{2ik_+R\cos\theta_{\hat x} \cos\theta_d}f_{z}(\theta_{\hat x},\theta_d)d\theta_d\right|d\theta_{\hat x}\notag\\
\le& C\varepsilon +\frac{C}{R\varepsilon^2}
\max_{\theta_{\hat{x}}\in\II_{\varepsilon,2}}\left(\|f_z(\theta_{\hat{x}},\cdot)\|_{C[\pi,2\pi]}+
\left\|\frac{d}{d\theta_d}f_z(\theta_{\hat{x}},\cdot)\right\|_{L^1(\pi,2\pi)}\right)
\le \frac{C(1+|z|)}{R^{1/3}}\label{eq10}
\end{align}
and
\begin{align}
&\int^{\pi}_0|{w}_{z,3}(R,\theta_{\hat x})|d{\theta_{\hat x}}\no\\
\le&\int_{\II_{\varepsilon,5}}|{w}_{z,3}(R,\theta_{\hat x})|d{\theta_{\hat x}}+
\int_{\II_{\varepsilon,6}}\left|\int_{\II_{\varepsilon,7}}e^{2ik_+R\sin\theta_{\hat x}\sin\theta_d}g_{z}
(\theta_{\hat x},\theta_d)d\theta_d\right|d\theta_{\hat x} \notag\\
&+\int_{\II_{\varepsilon,6}}\left|\int_{\II_{\varepsilon,8}}e^{2ik_+R\sin\theta_{\hat x} \sin\theta_d}g_{z}(\theta_{\hat x},\theta_d)d\theta_d\right|d\theta_{\hat x}\notag\\
\le& C\varepsilon +\frac{C}{R\varepsilon^2}
\max_{\theta_{\hat{x}}\in\II_{\varepsilon,6}}\left(\|g_z(\theta_{\hat{x}},\cdot)\|_{C[\pi,2\pi]}+
\left\|\frac{d}{d\theta_d}g_z(\theta_{\hat{x}},\cdot)\right\|_{L^1(\pi,2\pi)}\right)
\le \frac{C(1+|z|)}{R^{1/3}}.\label{Ne8}
\end{align}
Combining the formulas (\ref{eq:3.7}), (\ref{eq10}) and (\ref{Ne8}), we obtain that (\ref{eq:3.5}) holds.
The proof is thus completed.
\end{proof}

Finally, as a direct consequence of Lemmas \ref{NL2} and \ref{NL3}, we can apply the
formula (\ref{eq39}) to obtain the following theorem on the imaging function $I_P(z,R)$.

\begin{theorem}\label{th:1}
Let $R>0$ be large enough  and $z\in \R^2$. Define the function
\be \label{Ne14}
I_S(z,R):= \int_{\partial{B^{+}_R}}|U(x,z)|^2 ds(x).
\en
Then the imaging function $I_{P}(z,R)$ can be written as
\[
I_{P}(z,R)= I_S(z,R) +I_{P,Res}(z,R)
\]
with $I_{P,Res}(z,R)$ satisfying the estimate
\[
|I_{P,Res}(z,R)|\le {C(1+|z|)^2}{R^{-1/3}}
\]
as $R\rightarrow+\infty$ uniformly for all $z\in\mathbb{R}^2$.
Here, $C>0$ is a constant independent of $R$ and $z$.
\end{theorem}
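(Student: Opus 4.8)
The plan is to start from the representation \eqref{eq39}, namely
\[
I_{P}(z,R) = \int_{{\partial{B^{+}_R}}}\left|U(x,z)+W(x,z)\right|^2 ds(x),
\]
and expand the squared modulus to obtain
\[
I_{P}(z,R) = \int_{\partial B^+_R}|U(x,z)|^2\,ds(x) + 2\,\real\!\int_{\partial B^+_R}U(x,z)\overline{W(x,z)}\,ds(x) + \int_{\partial B^+_R}|W(x,z)|^2\,ds(x).
\]
The first term is exactly $I_S(z,R)$ defined in \eqref{Ne14}, so everything reduces to bounding the cross term and the $|W|^2$ term by $C(1+|z|)^2 R^{-1/3}$, since $I_{P,Res}(z,R)$ is precisely the sum of these two remaining pieces.

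Next I would recall that $W(x,z)=W_1+W_2+W_3+W_4$, so both the cross term and $\int|W|^2$ expand into finitely many integrals of the forms $\int_{\partial B^+_R}U\,\overline{W_j}\,ds$ and $\int_{\partial B^+_R}W_l\,\overline{W_j}\,ds$ with $j,l\in\{1,2,3,4\}$. The strategy is then to invoke the estimates already established in Lemmas \ref{NL2} and \ref{NL3}, handling the indices according to which lemma controls them. Specifically, Lemma \ref{NL2} bounds the terms involving $W_4$ (the cross term $\int U\,\overline{W_4}$ and the self term $\int|W_4|^2$), while Lemma \ref{NL3} bounds the $W_1$-terms in \eqref{eq:3.1}--\eqref{eq:3.2} and the $W_2,W_3$-terms in \eqref{eq:3.5}. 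For the remaining mixed products not stated verbatim—such as $\int W_1\overline{W_4}$ or $\int U\overline{W_1}$—one applies the Cauchy--Schwarz inequality together with the pointwise bounds \eqref{eq:t1}--\eqref{eq:t4} from Lemma \ref{NL1} and the length $|\partial B^+_R|=\pi R$. For instance, $|\int U\overline{W_4}|\le (\int|U|^2)^{1/2}(\int|W_4|^2)^{1/2}$, where $\int_{\partial B^+_R}|W_4|^2\le CR^{-1}$ by Lemma \ref{NL2} and $\int|U|^2=I_S$ is itself $O(1+|z|)^2$ uniformly.

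Collecting all these contributions, every term is dominated by the slowest-decaying one, which is the $R^{-1/3}$ rate coming from \eqref{eq:3.5} in Lemma \ref{NL3}; all the other pieces decay at least as fast as $R^{-1/2}$. Summing the finitely many estimates and taking the worst exponent yields $|I_{P,Res}(z,R)|\le C(1+|z|)^2 R^{-1/3}$ uniformly in $z\in\R^2$, which is the claimed bound. The main obstacle, conceptually, is not in the present theorem at all—it has already been surmounted in Lemma \ref{NL3}, whose delicate oscillatory-integral analysis (via Lemma \ref{le1}, the splitting of $\Sp^1_+$ near the stationary and critical angles, and the choice $\varepsilon=R^{-1/3}$) produces the bottleneck rate $R^{-1/3}$. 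Here the only remaining care is bookkeeping: one must verify that no cross term involving $W_2$ or $W_3$ paired with $U$ or the other $W_l$ escapes the coverage of \eqref{eq:3.5}, and that the $(1+|z|)^2$ factors propagate correctly through each Cauchy--Schwarz step. Since all constituent estimates are already uniform in $z$ and independent of $R$, assembling them is routine, and the theorem follows directly as asserted.
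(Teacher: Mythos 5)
Your proposal is correct and follows essentially the same route as the paper, which presents Theorem \ref{th:1} as a direct consequence of expanding \eqref{eq39} and invoking Lemmas \ref{NL2} and \ref{NL3}; your explicit bookkeeping (including handling the one product $\int_{\partial B^+_R}W_1\overline{W_4}\,ds$ not stated verbatim via Cauchy--Schwarz with \eqref{eq:3.2} and Lemma \ref{NL2}) is exactly the intended argument. The only trivial slip is listing $\int_{\partial B^+_R}U\,\overline{W_1}\,ds$ among the terms ``not stated verbatim''---it is precisely \eqref{eq:3.1}---but this does not affect the validity of the proof.
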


\subsection{Property of the imaging function $I_F(z)$}\label{sec:4}

In this subsection, we study the asymptotic property between
the imaging function $I_F(z)$ given in \eqref{c:8}
and the function $I_S(z,R)$ given in (\ref{Ne14}) when the radius $R$ is large enough.
To achieve this, we will derive the uniform far-field expansions of
$U_i(x,z)$ ($i=1,2,3$) in what follows.

First, we have the following uniform far-field expansion of $U_1(x,z)$.

\begin{lemma}\label{LL:1}
Let $x\in \R_{+}^2$ with sufficiently large $|x|$ and $z\in \R^2$. Then $U_1(x,z)$ has the asymptotic behavior
\begin{align}\label{eq24}
U_{1}(x,z)=\frac{e^{ik_{+}|x|}}{\sqrt {|x|}}\int_{\Sp^1_{-}}u^{\infty}(\hat x, d)e^{-ik_{+}z\cdot d} ds(d)+ U_{1,Res}(x,z)
\end{align}
with the residual term $U_{1,Res}(x,z)$ satisfying
\begin{align*}
|U_{1,Res}(x,z)|\le \begin{cases} {C}{|x|^{- 3/2}} & \textrm{in the case}~k_+<k_-,\\
C|x|^{-3/4} &\textrm{in the case}~ k_+>k_-,
\end{cases}
\end{align*}
as $|x|\rightarrow+\infty$ uniformly for all $\theta_{\hat{x}}\in (0,\pi)$ and $z\in\mathbb{R}^2$.
Here, $C>0$ is a constant independent of $x$ and $z$.
\end{lemma}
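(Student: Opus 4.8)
The plan is to substitute the far-field asymptotics for $u^s(x,d)$ from Lemma \ref{Le:my} directly into the definition of $U_1(x,z)$ and then separate the leading oscillatory term from the residual. Recall that
\ben
U_1(x,z)= \int_{\Sp_{-}^1}u^s(x,d)e^{-ik_{+}z\cdot d}\,ds(d).
\enn
First I would write $u^s(x,d)= |x|^{-1/2}e^{ik_+|x|}u^\infty(\hat x,d)+ u^s_{Res}(x,d)$ using \eqref{eq27} (and its $k_+<k_-$ analogue in statement \ref{sta1}), which is legitimate since $x\in\R^2_+$ has $|x|$ large and in particular $|x|>R$. Inserting this into the integral and pulling the factor $|x|^{-1/2}e^{ik_+|x|}$ out of the $d$-integral (it is independent of $d$) produces exactly the leading term displayed in \eqref{eq24}, namely $|x|^{-1/2}e^{ik_+|x|}\int_{\Sp^1_-}u^\infty(\hat x,d)e^{-ik_+z\cdot d}\,ds(d)$. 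The remainder is then forced to be
\ben
U_{1,Res}(x,z)= \int_{\Sp_{-}^1}u^s_{Res}(x,d)e^{-ik_{+}z\cdot d}\,ds(d),
\enn
and since $|e^{-ik_+z\cdot d}|=1$ we immediately obtain the pointwise bound $|U_{1,Res}(x,z)|\le \int_{\Sp^1_-}|u^s_{Res}(x,d)|\,ds(d)$.

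It remains to estimate this integral using the residual bounds from Lemma \ref{Le:my}. In the case $k_+<k_-$ this is trivial: statement \ref{sta1} gives $|u^s_{Res}(x,d)|\le C|x|^{-3/2}$ uniformly in $\theta_{\hat x}\in(0,\pi)$ and $d\in\Sp^1_-$, so integrating the constant bound over the finite-measure set $\Sp^1_-$ yields $|U_{1,Res}(x,z)|\le C|x|^{-3/2}$, as claimed. The main obstacle, as usual in this paper, is the case $k_+>k_-$: here the residual bound from statement \ref{sta2} is only the uniform $C|x|^{-3/4}$ over all of $(0,\pi)$, while the sharper $|x|^{-3/2}$ bounds carry the blow-up factors $|\theta_c-\theta_{\hat x}|^{-3/2}$ and $|\pi-\theta_c-\theta_{\hat x}|^{-3/2}$ near the critical angles. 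Since $u^s_{Res}$ depends on $x$ through $\hat x=x/|x|$, for a \emph{fixed} observation direction $\theta_{\hat x}$ these bounds are all in $d$ and one integrates in $d$; the crucial point is that all three bounds in statement \ref{sta2} are stated uniformly for $d\in\Sp^1_-$. Therefore the claimed bound in $x$ reduces to choosing, for each fixed $\hat x$, the best available pointwise estimate for $u^s_{Res}(x,d)$, which is no worse than $C|x|^{-3/4}$, and integrating over $d\in\Sp^1_-$ (a set of finite length).

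Concretely, for $k_+>k_-$ I would simply bound $|u^s_{Res}(x,d)|\le C|x|^{-3/4}$ uniformly in $\theta_{\hat x}\in(0,\pi)$ and $d\in\Sp^1_-$ and integrate over $d$ to obtain $|U_{1,Res}(x,z)|\le C|x|^{-3/4}$, matching the stated estimate exactly; the sharper angle-dependent bounds are not needed for this particular lemma because the target rate is already the crude $|x|^{-3/4}$. The only subtlety worth flagging is that the decomposition \eqref{eq27} is valid uniformly for $\theta_{\hat x}\in(0,\pi)$ and $d\in\Sp^1_-$, so that after extracting the leading term the residual integral inherits this uniformity and all constants are independent of $z$ (the phase $e^{-ik_+z\cdot d}$ contributes only modulus one and hence no $z$-dependence to the bound). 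Combining the two cases gives the stated piecewise estimate for $|U_{1,Res}(x,z)|$ and completes the proof.
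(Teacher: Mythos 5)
Your proposal is correct and is exactly the argument the paper intends: the paper's proof of Lemma \ref{LL:1} consists of the single line ``This lemma is a direct consequence of Lemma \ref{Le:my},'' and your expansion---substituting the far-field decomposition from Lemma \ref{Le:my} into the definition of $U_1$, pulling out the $d$-independent factor $|x|^{-1/2}e^{ik_+|x|}$, and integrating the uniform residual bounds ($C|x|^{-3/2}$ for $k_+<k_-$, the crude $C|x|^{-3/4}$ for $k_+>k_-$) over the finite-measure set $\Sp^1_-$---is precisely what that citation compresses. Your observations that the sharper angle-dependent bounds are unnecessary here and that the phase $e^{-ik_+z\cdot d}$ contributes no $z$-dependence are both accurate.
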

\begin{proof} This lemma is a direct consequence of Lemma \ref{Le:my}.
\end{proof}

Secondly, we analyze the uniform far-field expansions of $U_2(x,z)$ and $U_3(x,z)$. To do this,
we need the following two lemmas, which will be proved in Appendix \ref{sec4}.

\begin{lemma}\label{Le:3}
Let $a,b,\lambda \in\mathbb{R}$ with $a<0<b$ and $\lambda>0$. Then the integral $I(\lambda):= \int^{b}_{a} e^{-i\lambda{\eta^2}/2}d\eta$ satisfies
\[
I(\lambda)={e^{-i{\pi}/4}\sqrt{2\pi}}\lambda^{-1/2}+ I_{Res}(\lambda)
\]
with $|I_{Res}(\lambda)|\le {2}{\lambda}^{-1}(|a|^{-1}+b^{-1})$.
\end{lemma}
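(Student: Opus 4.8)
The plan is to evaluate the Gaussian-type oscillatory integral $I(\lambda)=\int_a^b e^{-i\lambda\eta^2/2}\,d\eta$ by comparing it with the full-line integral $\int_{-\infty}^{\infty}e^{-i\lambda\eta^2/2}\,d\eta$, whose value is the classical Fresnel integral. First I would recall that, after the standard substitution $s=\sqrt{\lambda}\,\eta$ (legitimate since $\lambda>0$), one has $\int_{-\infty}^{\infty}e^{-i\lambda\eta^2/2}\,d\eta=\lambda^{-1/2}\int_{-\infty}^{\infty}e^{-is^2/2}\,ds=\sqrt{2\pi}\,e^{-i\pi/4}\,\lambda^{-1/2}$, which is exactly the claimed leading term. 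Thus it suffices to show that the two tail contributions
\begin{align*}
I_{Res}(\lambda):=I(\lambda)-\sqrt{2\pi}\,e^{-i\pi/4}\lambda^{-1/2}
=-\int_{-\infty}^{a}e^{-i\lambda\eta^2/2}\,d\eta-\int_{b}^{\infty}e^{-i\lambda\eta^2/2}\,d\eta
\end{align*}
obey the stated bound $|I_{Res}(\lambda)|\le 2\lambda^{-1}(|a|^{-1}+b^{-1})$.

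The key step is to bound each tail integral by integrating by parts once, exploiting the nonstationary phase away from the origin. For the tail over $[b,\infty)$ I would write $e^{-i\lambda\eta^2/2}=\tfrac{1}{-i\lambda\eta}\,\frac{d}{d\eta}e^{-i\lambda\eta^2/2}$, so that
\begin{align*}
\int_{b}^{\infty}e^{-i\lambda\eta^2/2}\,d\eta
=\left[\frac{e^{-i\lambda\eta^2/2}}{-i\lambda\eta}\right]_{b}^{\infty}
-\int_{b}^{\infty}\frac{e^{-i\lambda\eta^2/2}}{-i\lambda\eta^2}\,d\eta.
\end{align*}
The boundary term at $\infty$ vanishes (interpreting the oscillatory integral as an improper/Abel limit), leaving a contribution of modulus $(\lambda b)^{-1}$ from the lower endpoint and a remaining integral bounded by $\lambda^{-1}\int_b^\infty \eta^{-2}\,d\eta=(\lambda b)^{-1}$; together these give $|\int_b^\infty|\le 2(\lambda b)^{-1}$. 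The tail over $(-\infty,a]$ is handled identically, using $|a|$ in place of $b$, yielding $|\int_{-\infty}^a|\le 2(\lambda|a|)^{-1}$. Adding the two and noting $b=|b|$, $|a|=-a$ delivers $|I_{Res}(\lambda)|\le 2\lambda^{-1}(|a|^{-1}+b^{-1})$, as required.

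The only delicate point is the meaning of the improper integral $\int_b^\infty e^{-i\lambda\eta^2/2}\,d\eta$ and the justification that the boundary term at infinity drops out. I would address this by introducing a truncation at $\eta=T$, performing the integration by parts on $[b,T]$ where everything is a genuine Lebesgue integral, and then letting $T\to\infty$: the boundary term $e^{-i\lambda T^2/2}/(-i\lambda T)$ has modulus $(\lambda T)^{-1}\to 0$, while the remaining integral converges absolutely by the $\eta^{-2}$ bound. This also certifies that $I(\lambda)$, defined as the convergent improper integral, equals $\sqrt{2\pi}\,e^{-i\pi/4}\lambda^{-1/2}$ minus the two tails, so the decomposition is valid. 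I expect this convergence/boundary-term bookkeeping to be the main (though routine) obstacle; the leading-term identity is classical and the tail estimate is a one-line integration by parts once the truncation is in place.
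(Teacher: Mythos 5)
Your proposal is correct and follows essentially the same route as the paper: both split $I(\lambda)$ into the full-line Fresnel integral (giving the leading term $e^{-i\pi/4}\sqrt{2\pi}\,\lambda^{-1/2}$) minus the two tails, and bound each tail by a single integration by parts against the phase, obtaining $(\lambda b)^{-1}$ from the endpoint plus $(\lambda b)^{-1}$ from the remaining absolutely convergent integral. The only cosmetic difference is that the paper first substitutes $\eta=\sqrt{2t}$ and integrates by parts in $t$, whereas you integrate by parts directly in $\eta$; the resulting bounds are identical.
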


\begin{lemma}\label{NLe:6}
Assume $a,b,t,\lambda\in\mathbb{R}$ with $a < 0 < b$ and $t, \lambda>0$. Define the integral
\begin{align*}
I(\lambda):=\int^{b}_a e^{-i\lambda{\eta^2}/{2}}\left(f(\eta)-f(0)\right)d\eta
\end{align*}
with $f(\eta):= q(\eta)e^{i t p(\eta)}$, where
$p(\eta)\in C^{3}[a,b]$ is a real-valued function and $q(\eta)\in C^{3}[a,b]$ is a complex-valued
function. Then we have
\ben
|I(\lambda)|\le C(1+b-a)(1+\|p\|_{C^3[a,b]})^3\|q\|_{C^3[a,b]}(1+t)^2\lambda^{-1},
\enn
where $C>0$ is a constant independent of $a,b,t,\lambda$ and the functions $p,q$.
\end{lemma}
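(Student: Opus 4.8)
The plan is to prove Lemma \ref{NLe:6} via a Taylor expansion of the oscillatory factor combined with repeated integration by parts, isolating the singular behaviour of the Fresnel-type integral near $\eta=0$. Writing $f(\eta)-f(0) = q(\eta)e^{itp(\eta)} - q(0)e^{itp(0)}$, I would first record the derivatives of $f$. Since $f' = (q' + itqp')e^{itp}$, $f'' = (q'' + 2itq'p' + itqp'' - t^2q(p')^2)e^{itp}$, and similarly for $f'''$, each differentiation brings down at most a factor of $t$ per power of $p'$, so the crude bounds $\|f\|_{C^0}\le\|q\|_{C^0}$, $\|f'\|_{C^0}\le C(1+t)\|q\|_{C^1}(1+\|p\|_{C^1})$, and $\|f^{(j)}\|_{C^0}\le C(1+\|p\|_{C^3})^j\|q\|_{C^3}(1+t)^j$ for $j=0,1,2,3$ hold. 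The key observation is that $f(\eta)-f(0)$ vanishes at $\eta=0$, which is exactly the point where the phase $-\lambda\eta^2/2$ is stationary; this vanishing is what upgrades the decay from the generic $\lambda^{-1/2}$ to $\lambda^{-1}$.

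The main step is integration by parts against the antiderivative of $e^{-i\lambda\eta^2/2}$. Away from $\eta=0$ the phase is non-stationary and one can integrate by parts using $e^{-i\lambda\eta^2/2} = (i\lambda\eta)^{-1}\frac{d}{d\eta}e^{-i\lambda\eta^2/2}$, but the factor $\eta^{-1}$ is singular at the origin, so the cancellation provided by $f(\eta)-f(0)$ is essential to control the boundary contribution there. I would split $[a,b]$ near and far from $0$, or more cleanly write $I(\lambda)=\int_a^b e^{-i\lambda\eta^2/2}\frac{f(\eta)-f(0)}{\eta}\,\eta\,d\eta$ and integrate by parts once, so that the boundary terms at $a,b$ carry a $\lambda^{-1}$ from the $(i\lambda\eta)^{-1}$ factor and a bounded $(f(\eta)-f(0))/\eta$ (bounded since $f\in C^1$, with $|(f(\eta)-f(0))/\eta|\le\|f'\|_{C^0}$), while the remaining integral involves $\frac{d}{d\eta}\big[\eta^{-1}(f(\eta)-f(0))\big]$, which is again controlled by $\|f''\|_{C^0}$ on a compact interval. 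Each boundary and interior term then carries the factor $\lambda^{-1}$, the length factor $(1+b-a)$, and the product $(1+\|p\|_{C^3})^j\|q\|_{C^3}(1+t)^j$ with $j\le 2$, yielding the claimed bound with the $(1+t)^2$ power.

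The hard part will be bookkeeping the constants uniformly in $a,b,t,\lambda$ and $p,q$, and in particular handling the $\eta^{-1}$ singularity cleanly so that no hidden dependence on the distance from $0$ to the endpoints creeps in. One technical point is to verify that $\eta\mapsto \eta^{-1}(f(\eta)-f(0))$ extends to a $C^1$ function across $\eta=0$ with derivative bounded by $\tfrac12\|f''\|_{C^0}$, which follows from writing $f(\eta)-f(0)=\int_0^1 f'(s\eta)\eta\,ds$ and differentiating under the integral sign; this is where the factor $(1+t)^2$ (rather than $(1+t)^3$) enters, since only two derivatives of $f$, hence two powers of $t$, are needed. Once this regularity is in place, the integration-by-parts estimate is routine and the stated inequality follows by collecting the uniform bounds on $\|f\|_{C^0}$, $\|f'\|_{C^0}$ and $\|f''\|_{C^0}$ derived above.
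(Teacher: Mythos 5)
Your proposal is correct, and its overall skeleton coincides with the paper's: both write $I(\lambda)=\int_a^b e^{-i\lambda\eta^2/2}h(\eta)\,\eta\,d\eta$ with $h(\eta):=(f(\eta)-f(0))/\eta$, use $\eta e^{-i\lambda\eta^2/2}=\tfrac{i}{\lambda}\tfrac{d}{d\eta}e^{-i\lambda\eta^2/2}$ to integrate by parts once, and then only need $\|h\|_{C^1[a,b]}\le C(1+\|p\|_{C^3})^3\|q\|_{C^3}(1+t)^2$ to conclude. Where you genuinely diverge is in how that $C^1$ bound on $h$ is obtained, and your route is simpler. The paper sets $g(\eta):=f'(\eta)\eta-(f(\eta)-f(0))=h'(\eta)\eta^2$, Taylor-expands $g$ to second order (which brings in $f'''$ and hence a potential factor $(1+t)^3$), and then must split into the cases $|\eta|\le(1+t)^{-1}$ and $|\eta|>(1+t)^{-1}$ to trade the extra power of $t$ against the extra power of $\eta$ and recover $(1+t)^2$. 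Your Hadamard-type representation $h(\eta)=\int_0^1 f'(s\eta)\,ds$, hence $h'(\eta)=\int_0^1 s\,f''(s\eta)\,ds$ and $\|h'\|_{C^0}\le\tfrac12\|f''\|_{C^0}\le C(1+\|p\|_{C^2})^2\|q\|_{C^2}(1+t)^2$, delivers the needed bound in one line with no case analysis, requires only $p,q\in C^2$ rather than $C^3$, and in fact proves a marginally sharper estimate (exponent $2$ in place of $3$ on $1+\|p\|$). The only caveat is cosmetic: your opening sentences flirt with the non-stationary-phase identity $e^{-i\lambda\eta^2/2}=(i\lambda\eta)^{-1}\tfrac{d}{d\eta}e^{-i\lambda\eta^2/2}$ and a near/far splitting before settling on the clean formulation; the final write-up should commit to the latter, as you indicate.
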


With the aid of Lemmas \ref{Le:3} and \ref{NLe:6}, we have the following properties for
some relevant integrals.

\begin{lemma}\label{LL:2}
Let $x=|x|\hat x= |x|(\cos \theta_{\hat x},\sin\theta_{\hat x})\in\mathbb{R}^2_+$ with
$\theta_{\hat x}\in (0,\pi)$ and $z=|z|(\cos\theta_{\hat{z}},\sin\theta_{\hat{z}})\in\mathbb{R}^2$
with $\theta_{\hat{z}} \in [0,2\pi)$. Define
\begin{align*}
V(x,z) := \int^{2\pi}_{\pi}  e^{ik_{+}|x|\cos(\theta_d+{\theta}_{\hat x})}e^{-ik_{+}|z|\cos(\theta_d-\theta_{\hat{z}})}f(\theta_d)d \theta_d
\end{align*}
with $f\in C[\pi,2\pi]$.
Then the following statements hold true.

\begin{enumerate}[(a)]
  \item \label{a} If $f\in C^3[\pi,2\pi]$, then $V(x,z)$ has the form
  \begin{align}\label{eq:13}
   V(x,z) = \frac{e^{ik_{+}|x|}}{|x|^{\frac 12}}e^{-\frac{i\pi}{4}}{\left(\frac{2\pi}{k_{+}}\right)}^{ 1/ 2}f(2\pi-\theta_{\hat x})e^{-ik_{+}\hat x\cdot z{'}} + V_{Res}(x,z)
\end{align}
with the residual term $V_{Res}(x,z) $ satisfying
\begin{align*}
| V_{Res}(x,z) | \le C\|f\|_{C^3[\pi,2\pi]}E(\theta_{\hat x},z){|x|}^{-1}
\end{align*}
for all $x\in\mathbb{R}^2_+$ and $z\in\mathbb{R}^2$. Here, $E(\theta_{\hat x},z)$ is given by
\begin{align}\label{eq:29}
&E(\theta_{\hat x},z)\notag\\
&:=(1+|z|)^2+\frac{1}{|\sin\frac{\theta_{\hat x}}2|}+ \frac{1}{|\sin \frac{\pi-\theta_{\hat x}}2|}+\frac{1}{|\sin\theta_{\hat x}|}+
(1+|z|)
\left|\int_{{\theta}_{\hat x}}^{\pi-{\theta}_{\hat x}}\frac{1}{\sin^2 t}dt\right|
\end{align}
and $C>0$ is a constant independent of $x$, $z$ and $f$.
  \item \label{b} Let $a,b \in \R$ such that $\pi<a<b<2\pi$ and let $\theta_0\in(a,b)$. If $f$ has the form
  \begin{align*}
      f(\theta) = \mathcal S_j(\cos \theta-\cos\theta_0)g(\theta), \quad \theta \in [\pi,2\pi],
  \end{align*}
  with $j\in\{1,2\}$, $g\in C^3[\pi,2\pi]$ and $\textrm{Supp}(g)\subset (a,b)$, then
  $V(x,z)$ has the form (\ref{eq:13})
  with the residual term $V_{Res}(x,z) $ satisfying
\begin{align}\label{eq:20}
|V_{Res}(x,z)| \le C {(1+|z|)^2\|g\|_{C^3[a,b]}}|x|^{-3/4}  \quad \text{as} \; |x| \rightarrow +\infty
\end{align}
uniformly for all $\theta_{\hat x} \in (0,\pi)$ and $z\in\mathbb{R}^2$.
Here, $C>0$ is a constant independent of $x$, $z$, $\theta_0$ and $g$ but dependent of $a$ and $b$.
\end{enumerate}
\end{lemma}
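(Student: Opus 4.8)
The plan is to treat $V(x,z)$ as an oscillatory integral in $\theta_d$ with large parameter $\lambda:=k_+|x|$, in which $e^{ik_+|x|\cos(\theta_d+\theta_{\hat x})}$ carries the fast oscillation while $e^{-ik_+|z|\cos(\theta_d-\theta_{\hat z})}f(\theta_d)$ plays the role of an amplitude whose own $z$-oscillation is tracked through the auxiliary parameter $t:=k_+|z|$ of Lemma~\ref{NLe:6}. The phase $\theta_d\mapsto\cos(\theta_d+\theta_{\hat x})$ has a single nondegenerate stationary point in $(\pi,2\pi)$, namely $\theta_d^\ast=2\pi-\theta_{\hat x}$, where the phase equals $1$ and its second derivative equals $-1$. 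First I would substitute $\eta=-2\sin\big((\theta_d+\theta_{\hat x})/2\big)$, which renders the phase exactly quadratic, $\cos(\theta_d+\theta_{\hat x})=1-\eta^2/2$, and turns $V$ into
\[
V(x,z)=e^{ik_+|x|}\int_\alpha^\beta e^{-i\lambda\eta^2/2}\,\Phi(\eta)\,d\eta,\quad
\Phi(\eta):=f(\theta_d(\eta))\,e^{-ik_+|z|\cos(\theta_d(\eta)-\theta_{\hat z})}\,(1-\eta^2/4)^{-1/2},
\]
where $\alpha=-2\cos(\theta_{\hat x}/2)<0<\beta=2\sin(\theta_{\hat x}/2)$ and $\theta_d(\eta)$ is the smooth interior inverse of the substitution. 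A direct computation with $z'=(z_1,-z_2)$ gives $\Phi(0)=f(2\pi-\theta_{\hat x})\,e^{-ik_+\hat x\cdot z'}$, precisely the coefficient in \eqref{eq:13}; moreover $|\alpha|^{-1}+\beta^{-1}=\frac12\big(|\sin(\theta_{\hat x}/2)|^{-1}+|\sin((\pi-\theta_{\hat x})/2)|^{-1}\big)$, already accounting for two of the terms in $E(\theta_{\hat x},z)$.

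For part~(a) I would split $\Phi(\eta)=\Phi(0)+(\Phi(\eta)-\Phi(0))$. The constant piece $\Phi(0)\int_\alpha^\beta e^{-i\lambda\eta^2/2}\,d\eta$ is handled by Lemma~\ref{Le:3}: its main term reproduces the leading term of \eqref{eq:13}, while its residual is bounded by $2\lambda^{-1}(|\alpha|^{-1}+\beta^{-1})|\Phi(0)|$, i.e.\ by $C\|f\|_{C^3[\pi,2\pi]}|x|^{-1}$ times the two $\csc$-type terms above. The remaining piece is exactly of the form treated by Lemma~\ref{NLe:6}, with $q(\eta):=f(\theta_d(\eta))(1-\eta^2/4)^{-1/2}$, real-valued $p(\eta):=-\cos(\theta_d(\eta)-\theta_{\hat z})$ and $t:=k_+|z|$, yielding a bound $C(1+\beta-\alpha)(1+\|p\|_{C^3})^3\|q\|_{C^3}(1+t)^2\lambda^{-1}$ in which $(1+t)^2=(1+k_+|z|)^2$ supplies the $(1+|z|)^2$ in $E$. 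The substantive work is the estimation of $\|p\|_{C^3[\alpha,\beta]}$ and $\|q\|_{C^3[\alpha,\beta]}$: for each fixed interior $\theta_{\hat x}$ these are finite, but differentiating through the inverse substitution $\theta_d(\eta)$ (whose Jacobian degenerates at the endpoints) injects negative powers of $\sin(\theta_{\hat x}/2)$, $\sin((\pi-\theta_{\hat x})/2)$ and $\sin\theta_{\hat x}$, together with a term of the shape $\big|\int_{\theta_{\hat x}}^{\pi-\theta_{\hat x}}\sin^{-2}t\,dt\big|$; after careful bookkeeping these factors assemble into the majorant $E(\theta_{\hat x},z)$, completing~(a).

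For part~(b) the amplitude $f=\mathcal S_j(\cos\theta-\cos\theta_0)g$ is not $C^3$ at $\theta=\theta_0$, so Lemma~\ref{NLe:6} cannot be applied across the whole interval. Since $\mathrm{Supp}(g)\subset(a,b)\subset\subset(\pi,2\pi)$, the substitution is smooth and nondegenerate on the support (the endpoint degeneracies of part~(a) are absent), and the singularity is carried to $\eta_0:=-2\sin((\theta_0+\theta_{\hat x})/2)\in(\alpha,\beta)$, near which $f\circ\theta_d$ behaves like $\sqrt{|\eta-\eta_0|}$. I would then argue according to the distance $|\eta_0|$ between the singularity and the stationary point $\eta=0$, inserting a smooth cutoff to separate a neighbourhood of $\eta_0$ from one of $0$. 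Where the two points are separated, $\Phi$ is $C^3$ near $0$ and the analysis of part~(a) applies, while near $\eta_0$ there is no stationary point and Lemma~\ref{le1} with Remark~\ref{re1}, applied to the integrable $\sqrt{\,\cdot\,}$-type amplitude, bounds the contribution. When $\eta_0$ lies within the chosen scale of $0$, I would instead estimate the joint region directly through the scaling $\eta=\lambda^{-1/2}\xi$ together with $|\mathcal S_j(\cos\theta-\cos\theta_0)|\le C\sqrt{|\eta-\eta_0|}$, which gives $O(\lambda^{-3/4})=O(|x|^{-3/4})$; choosing the cutoff scale as a suitable power of $|x|$ balances the regimes and yields the stated bound. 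This is internally consistent: when $\theta_0=2\pi-\theta_{\hat x}$ the leading coefficient $f(2\pi-\theta_{\hat x})=\mathcal S_j(\cos\theta_{\hat x}-\cos\theta_0)g(2\pi-\theta_{\hat x})$ vanishes, so the whole of $V$ is then of residual size $O(|x|^{-3/4})$.

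The step I expect to be the main obstacle is the uniform control of the $C^3$-norms of $p$ and $q$ in part~(a): because $\theta_d(\eta)$ degenerates as $\theta_{\hat x}\to0$ or $\theta_{\hat x}\to\pi$, each differentiation injects singular trigonometric factors, and the accounting must reproduce $E(\theta_{\hat x},z)$ (which is only integrable, not bounded, in $\theta_{\hat x}$) rather than a cruder, non-integrable bound. In part~(b) the delicate point is the balancing of the two regimes so as to land exactly on the exponent $3/4$, for which the square-root vanishing of $\mathcal S_j$ at $\theta_0$ (rather than a blow-up) is essential.
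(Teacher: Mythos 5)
Your overall strategy (quadratic change of variables at the stationary point $\theta_d^\ast=2\pi-\theta_{\hat x}$, then Lemma \ref{Le:3} for the frozen amplitude and Lemma \ref{NLe:6} for the increment) is the right skeleton, and your part (b) plan of distinguishing whether the square-root singularity sits within scale $\lambda^{-1/2}$ of the stationary point is essentially the paper's Case 1 versus Cases 2--3 dichotomy. But there is a genuine gap in part (a): you apply the substitution $\eta=-2\sin((\theta_d+\theta_{\hat x})/2)$ over the \emph{entire} interval $[\pi,2\pi]$ and propose to absorb the endpoint degeneracy of the Jacobian into $\|p\|_{C^3[\alpha,\beta]}$ and $\|q\|_{C^3[\alpha,\beta]}$. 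This cannot produce the majorant $E(\theta_{\hat x},z)$. Your amplitude contains the factor $(1-\eta^2/4)^{-1/2}$, and since $1-\alpha^2/4=\sin^2(\theta_{\hat x}/2)$ and $1-\beta^2/4=\cos^2(\theta_{\hat x}/2)$, three differentiations near the endpoints give $\|q\|_{C^3[\alpha,\beta]}\gtrsim \sin^{-7}(\theta_{\hat x}/2)$ as $\theta_{\hat x}\to0$ (and similarly with $\cos$ as $\theta_{\hat x}\to\pi$), with a further blow-up from $(1+\|p\|_{C^3})^3$. Lemma \ref{NLe:6} is a sup-norm estimate over the whole interval, so its output is governed by these worst-case values; the resulting bound is of order $\sin^{-7}(\theta_{\hat x}/2)$ or worse, not the first-order singularity $|\sin(\theta_{\hat x}/2)|^{-1}+|\sin\theta_{\hat x}|^{-1}+(1+|z|)\bigl|\int_{\theta_{\hat x}}^{\pi-\theta_{\hat x}}\sin^{-2}t\,dt\bigr|$ appearing in \eqref{eq:29}. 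Indeed the $\int\sin^{-2}$ term in $E$ is the signature of an integration by parts over the non-stationary arc, not of a $C^3$-norm bound; it cannot "assemble" out of the bookkeeping you describe. The exponent matters downstream: in Theorem \ref{NL4} one needs $E\le C(1+|z|)^2\delta^{-1}$ with $\delta=R^{-1/4}$, and a $\sin^{-7}$ singularity would destroy the $R^{-1/4}$ rate.

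The repair is the paper's splitting: perform the quadratic substitution only on the symmetric subinterval $|t|\le\min(\theta_{\hat x},\pi-\theta_{\hat x})$ about the stationary point (after the shift $t=\theta_d-(2\pi-\theta_{\hat x})$), where $|\eta|\le\sqrt2$ and the inverse map $w(\eta)=2\arcsin(\eta/2)$ has uniformly bounded $C^4$ norm, and handle the complementary arc, which contains no stationary point, by a single integration by parts; the boundary term yields $|\sin\theta_{\hat x}|^{-1}$ and the integrated term yields $(1+|z|)\bigl|\int_{\theta_{\hat x}}^{\pi-\theta_{\hat x}}\sin^{-2}t\,dt\bigr|$, exactly the remaining pieces of $E$. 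For part (b) your sketch is workable in outline, but you should note that there the support condition on $g$ keeps the amplitude away from the degenerate endpoints (so the global substitution is legitimate), and the delicate step you defer --- the integration by parts across the $\mathcal S_j$ singularity in the separated regime, with the identity $\mathcal S_1(\eta-\beta)-\mathcal S_1(-\beta)=\eta/(\mathcal S_1(\eta-\beta)+\mathcal S_1(-\beta))$ and the lower bound $|\mathcal S_1(\eta-\beta)+\mathcal S_1(-\beta)|\ge\sqrt{|\beta|}$ --- is where the $\lambda^{-1}|\beta|^{-1/2}\le\lambda^{-3/4}$ balance actually comes from.
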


\begin{proof}
Let  $\widetilde{\theta}_{\hat x}:= \pi-{\theta}_{\hat x}$ and
$\mathcal{P}(t):= k_+\cos(t+\widetilde{\theta}_{\hat x}-\theta_{\hat{z}})$.
A straightforward calculation gives that
\begin{align}
V(x,z) = \int^{\pi-\widetilde{\theta}_{\hat x}}_{-\widetilde{\theta}_{\hat x}} f(t+\pi+\widetilde{\theta}_{\hat x})e^{ik_{+}|x|\cos t}e^{i|z|\mathcal{P}(t)}dt. \label{eq:28}
\end{align}

First, we prove the statement (\ref{a}). To do this, we consider the following Parts 1.1 and 1.2.

\textbf{Part 1.1}: Estimate of $V(x,z)$ with $\theta_{\hat x} \in \left[\left.\pi/2,\pi\right)\right.$.
We rewrite $V$ as follows:
\begin{align*}
V(x,z) &= \left\{\int^{\widetilde{\theta}_{\hat x}}_{-\widetilde{\theta}_{\hat x}}+\int^{\pi-\widetilde{\theta}_{\hat x}}_{\widetilde{\theta}_{\hat x}}\right\}f(t+\pi+\widetilde{\theta}_{\hat x})e^{ik_{+}|x|\cos t}e^{i|z|\mathcal{P}(t)}dt =:V_1(x,z)+ V_2(x,z).
\end{align*}
For the function $V_1$, the change of variable $\eta = 2\sin (t/2)$ gives that
\begin{align*}
V_1(x,z)&= \int^{\alpha_2}_{\alpha_1} e^{-ik_{+}|x|({\eta^2}/2-1)}F(0,z)d\eta
+
\int^{\alpha_2}_{\alpha_1} e^{-ik_{+}|x|({\eta^2}/2-1)}
\left(F(\eta,z)-F(0,z)\right)
d\eta
\\
&=:V_{1,1}(x,z)+ V_{1,2}(x,z),
\end{align*}
where
\begin{align}
\label{eq18}
&\alpha_1 := -2\sin{({\widetilde{\theta}_{\hat x}}/{2})},\quad \alpha_2 := 2\sin{({\widetilde{\theta}_{\hat x}}/{2})},\\
\label{eq19}
&F(\eta, z):=f(w(\eta)
+\pi
+\widetilde{\theta}_{\hat x})e^{i|z|\mathcal{P}(w(\eta))}w'(\eta)\quad \textrm{with}~ w(\eta) := 2\arcsin(\eta/2).
\end{align}
Note that $\|w\|_{C^{4}\left[\alpha_1, \alpha_2\right]}\le C$ due to $\theta_{\hat x} \in \left[\left.\pi/2,\pi\right)\right.$.
Thus it follows from
Lemmas \ref{Le:3} and \ref{NLe:6} that $V_{1,1}(x,z)$ has the form
\begin{align*}
V_{1,1}(x,z)=  \frac{e^{ik_{+}|x|}}{|x|^{\frac 12}}e^{-\frac{i\pi}{4}}{\left(\frac{2\pi}{k_{+}}\right)}^{ 1/ 2}f(2\pi-\theta_{\hat x})e^{-ik_{+}\hat x\cdot z{'}} + V_{1,1,Res}(x,z)
\end{align*}
with the residual term $V_{1,1,Res}(x,z)$ satisfying
\ben
|V_{1,1,Res}(x,z)| \le C|\sin (\widetilde{\theta}_{\hat x}/2)|^{-1}{|x|}^{-1}\|f\|_{C[\pi,2\pi]}
\enn
and that $V_{1,2}(x,z)$ satisfies the estimate
\ben
|V_{1,2}(x,z)|\le C{(1+|z|)^2\|f\|_{C^3[\pi,2\pi]}}{|x|^{-1}}.
\enn
Moreover,
for the function $V_2$, we can apply an integration by parts to obtain that
\begin{align*}
|V_2(x,z)|&= \left|\frac{i}{k_+|x|}\left[ f_1(t)e^{ik_{+}|x|\cos t}\bigg|^{\pi-\widetilde{\theta}_{\hat x}}_{t=\widetilde{\theta}_{\hat x}} - \int^{\pi-\widetilde{\theta}_{\hat x}}_{\widetilde{\theta}_{\hat x}} f'_1(t)e^{ik_{+}|x|\cos t}dt\right]\right|\\
&\le C\left(|\sin\widetilde{\theta}_{\hat x}|^{-1}+(1+|z|)\left|\int_{\widetilde{\theta}_{\hat x}}^{\pi-\widetilde{\theta}_{\hat x}}(\sin t)^{-2}dt\right|\right) \|f\|_{C^1[\pi,2\pi]}|x|^{-1},
\end{align*}
where $f_1(t):=f(t+\widetilde{\theta}_{\hat x}+\pi)e^{i|z|\mathcal{P}(t)}(\sin t)^{-1}$.
Hence, the above arguments imply that
$V(x,z)$ has the form (\ref{eq:13}) with $V_{Res}(x,z)$ satisfying
\begin{align*}
&| V_{Res}(x,z) | \notag\\
&\le C\|f\|_{C^3[\pi,2\pi]}\left((1+|z|)^2+ \frac{1}{|\sin
\frac{\widetilde{\theta}_{\hat x}}{2}|}+\frac{1}{|\sin\widetilde{\theta}_{\hat x}|}+
(1+|z|)
\left|\int_{\widetilde{\theta}_{\hat x}}^{\pi-\widetilde{\theta}_{\hat x}}\frac{1}{\sin^2 t}dt\right|\right){|x|}^{-1}
\end{align*}
for all $x\in\mathbb{R}^2_+$ with $\theta_{\hat x} \in \left[\left.\pi/2,\pi\right)\right.$ and all $z\in\mathbb{R}^2$.

\textbf{Part 1.2}: Estimate of $V(x,z)$ with $\theta_{\hat x}\in (0,\pi/2)$. It is clear that (\ref{eq:28}) can be rewritten as
\begin{align*}
V(x,z)=\int^{\pi-{\theta}_{\hat x}}_{-{\theta}_{\hat x}} f(-t+\pi+\widetilde{\theta}_{\hat x})e^{ik_{+}|x|\cos t}e^{i|z|\mathcal{P}(-t)}dt.
\end{align*}
Then, by using similar arguments as in Part 1.1, we can deduce that $V(x,z)$ has the form (\ref{eq:13}) with $V_{Res}(x,z)$ satisfying
\begin{align*}
&| V_{Res}(x,z) |\notag\\
&\le C\|f\|_{C^3[\pi,2\pi]}\left((1+|z|)^2+ \frac{1}{|\sin \frac{\theta_{\hat x}}2|}+\frac{1}{|\sin\theta_{\hat x}|}+
(1+|z|)
\left|\int_{{\theta}_{\hat x}}^{\pi-{\theta}_{\hat x}}\frac{1}{\sin^2 t}dt\right|\right){|x|}^{-1}
\end{align*}
for all $x\in\mathbb{R}^2_+$ with $\theta_{\hat x}\in (0,\pi/2)$ and all $z\in\mathbb{R}^2$.

Therefore, it follows from the discussions in the above two parts that the statement (\ref{a}) holds.

Secondly, we prove the statement (\ref{b}). We only consider the case $j=1$ since the proof for the case $j=2$ can be obtained in the same manner.
In the rest of the proof, we assume that $|x|$
is sufficiently large.
Let $\delta_a:=a-\pi$ and $\delta_b:=2\pi-b$.
Our proof consists of the following Parts 2.1 and 2.2.

\textbf{Part 2.1}: Estimate of $V(x,z)$ with $\theta_{\hat x}\in [\delta_b/2,\pi-\delta_a/2]$. By the change of variable $\eta= 2\sin ({t}/2)$, \eqref{eq:28} can be rewritten as
\begin{align}\label{eq15}
V(x,z)&= \int^{\alpha_3}_{\alpha_1} e^{-ik_{+}|x|({\eta^2}/2-1)}F(\eta,z)d\eta,
\end{align}
where $\alpha_1$ and $F(\eta, z)$ are given as in \eqref{eq18} and \eqref{eq19},
respectively, and $\alpha_3:= 2\sin{({(\pi-\widetilde{\theta}_{\hat x})}/{2})}$.
For this part, due to $\theta_{\hat x}\in [\delta_b/2,\pi-\delta_a/2]$, it is clear that
\begin{align}\label{eq17}
-2\cos(\delta_b/4)\leq\alpha_1\leq -2\sin(\delta_a/4),~
2\sin(\delta_b/4)\leq\alpha_3\leq 2\cos(\delta_a/4),~
\|w\|_{C^{4}\left[\alpha_1, \alpha_3\right]}\le C,
\end{align}
where $w(\cdot)$ is defined as in \eqref{eq19}.
Let $\vartheta_0:=2\pi-\theta_0$.
It is easy to see that
\begin{align}\label{eq12}
&\sin\left(({t+
2\pi-\vartheta_0-\theta_{\hat{x}}})/2\right)>\min[\sin(\delta_a/2),\sin(\delta_b/2)],\\
\label{eq13}
&\cos\left(({t\pm(
\theta_{\hat{x}}-\vartheta_0
))}/4\right)> \min[\sin(\delta_a/4),\sin(\delta_b/4)]
\end{align}
for $t\in [-\widetilde{\theta}_{\hat x}, \pi-\widetilde{\theta}_{\hat x}]$.
Thus it follows that for $t\in [-\widetilde{\theta}_{\hat x}, \pi-
\widetilde{\theta}_{\hat x}]$,
\begin{align*}
\cos(t-\theta_{\hat{x}})-\cos&\vartheta_0= \frac{\sin{\left(\frac{t+2\pi
-\vartheta_0-\theta_{\hat{x}}
}{2}
\right)}\cos\left(\frac{t
+\vartheta_0-\theta_{\hat{x}}
}{4}\right)\left[4\sin{\left(\frac{t
+\vartheta_0-\theta_{\hat{x}}
}{4}\right)}\cos\left(\frac{t
+\theta_{\hat{x}}-\vartheta_0
}{4}\right)\right]}{\cos\left(\frac{t
+\theta_{\hat{x}}-\vartheta_0
}{4}\right)}\notag\\
&=\frac{\sin{\left(\frac{t
+2\pi-\vartheta_0-\theta_{\hat{x}}
}{2}\right)}\cos\left(\frac{t
+\vartheta_0-\theta_{\hat{x}}
}{4}\right)}{\cos\left(\frac{t
+\theta_{\hat{x}}-\vartheta_0
}{4}\right)}\left[2\sin{\left(\frac{t}{2}\right)}-2\sin\left(\frac{
\theta_{\hat{x}}-\vartheta_0
}{2}\right)\right],
\end{align*}
which yields $\mathcal S_1\left(\cos(t-\theta_{\hat{x}}
)-\cos\vartheta_0\right)=S_1(2\sin(t/2)-\beta)h(t)$
with $\beta:= 2\sin((\theta_{\hat{x}}-\vartheta_0)/2)$
and $h(t)$ given by
\begin{align*}
&h(t):=\notag\\
&{\mathcal{S}_1\left(\sin{\left(\frac{t
+2\pi-\vartheta_0-\theta_{\hat{x}}
}{2}\right)}\right)
\mathcal{S}_1\left(\cos\left(\frac{t
+\vartheta_0-\theta_{\hat{x}}
}{4}\right)\right)
}\bigg/{\mathcal{S}_1\left(\cos\left(\frac{t
+\theta_{\hat{x}}-\vartheta_0
}{4}\right)\right)}.
\end{align*}
Hence, we have
\begin{align}\label{eq16}
F(\eta,z)= F_1(\eta,z)\mathcal S_1(\eta-\beta)\quad \textrm{for}~ \eta\in[\alpha_1,\alpha_3],
\end{align}
where $F_1(\eta,z)$ is given by
\begin{align}\label{eq:31}
F_1(\eta,z):= e^{i|z|\mathcal{P}(w(\eta))}w'(\eta)g(w(\eta)+\widetilde{\theta}_{\hat x}+\pi)h(w(\eta)).
\end{align}
It is clear from \eqref{eq12} and \eqref{eq13} that
\begin{align}\label{eq:21}
\|h\|_{C^{3}[-\widetilde{\theta}_{\hat x},\pi-\widetilde{\theta}_{\hat x}]}\le C.
\end{align}
This, together with \eqref{eq17}, implies that
\begin{align}\label{eq:15}
\|F_1(\cdot,z)\|_{C^{j}\left[\alpha_1, \alpha_3\right]}\le C(1+|z|)^j\|g\|_{C^j[a,b]}, \quad j=1,2.
\end{align}
The rest proof of this part is divided into three cases.

\textbf{Case 1}: $\theta_{\hat x}\in [\delta_b/2,\pi-\delta_a/2]$ with $|\sin((\theta_{\hat{x}}-\vartheta_0)/2)|\le (2\sqrt{k_{+}|x|})^{-1}$ (that is, $\sqrt{k_{+}|x|}|\beta|\leq 1$). Let $\lambda := k_+|x|$
and $\sigma := \sqrt{\lambda}\beta$.
Then in terms of \eqref{eq15} and \eqref{eq16}, we can introduce the change of variable $\eta = {y}/{\sqrt{\lambda}}+\beta$ to obtain that
\begin{align}\label{eq14}
V(x,z)&= \int^{{\sqrt{\lambda}\alpha_3-\sigma}}_{\sqrt{\lambda}\alpha_1-\sigma}
{e^{-i(\frac{\sigma^2}2-\lambda)}}{\lambda^{ -3/4}}{e^{-i\frac{y^2+2y\sigma}2}}{\mathcal S_1{(y)}}F_1\left(\frac{y+\sigma}{\sqrt{\lambda}},z\right)dy.
\end{align}

In this case, we claim that
\begin{align}\label{eq:22}
|V(x,z)|\le C{(1+|z|)^2\|g\|_{C^2[a,b]}}{|x|^{-3/4}}\quad\textrm{as}~|x|\rightarrow+\infty
\end{align}
uniformly for all
$\theta_{\hat x}\in [\delta_b/2,\pi-\delta_a/2]$ with $|\sin((\theta_{\hat{x}}-\vartheta_0)/2)|\le (2\sqrt{k_{+}|x|})^{-1}$ and all $z\in\mathbb{R}^2$.
To prove this, we set $\gamma_0:=\max(2\sin^{-1}(\delta_a/4),2\sin^{-1}(\delta_b/4),1)$
and choose $|x|$ to be large enough such that $\sqrt{\lambda}>\gamma_0$.
Noting that $\sqrt{\lambda}\alpha_1-\sigma<-3$ and $\sqrt{\lambda}\alpha_3-\sigma>3$ due to \eqref{eq17},
we can rewrite (\ref{eq14}) as
\begin{align}
V(x,z)
&=\left\{\int_{\sqrt{\lambda}\alpha_1-\sigma}^{-2}+\int^2_{-2}+ \int_{2}^{\sqrt{\lambda}\alpha_3-\sigma}\right\}
\frac{e^{-i(\frac{\sigma^2}2-\lambda)}}{\lambda^{ -3/4}}e^{-i\frac{y^2+2y\sigma}{2}}\mathcal S_1{(y)}F_1\left(\frac{y+\sigma}{\sqrt{\lambda}},z\right)dy\notag\\
&=: \mathcal{V}_1(x,z)+\mathcal{V}_2(x,z)+\mathcal{V}_3(x,z). \label{eq:24}
\end{align}
Clearly, $|\mathcal{V}_2(x,z)|\leq C\|F_1(\cdot,z)\|_{C[\alpha_1,\alpha_3]}{{\lambda}^{-3/4}}$.
Moreover, an integration by parts gives that
\begin{align*}
&{e^{i(\frac{\sigma^2}2-\lambda)}}{\lambda^{ \frac34}}\mathcal{V}_1(x,z)=\\
&\frac{e^{-i\frac{y^2+2y\sigma}{2}}\mathcal{S}_1{(y)}F_1\left(\frac{y+\sigma}{\sqrt{\lambda}},z\right)}{-i(y+\sigma)}
\bigg|_{y=\sqrt{\lambda}\alpha_1-\sigma}^{-2}-\frac{e^{-i\frac{y^2+2y\sigma}{2}}
\frac{d}{dy}{\left(\frac{i\mathcal{S}_1{(y)}}{y+\sigma}F_1
\left(\frac{y+\sigma}{\sqrt{\lambda}},z\right)\right)}}{-i(y+\sigma)}{\bigg|}_{y=\sqrt{\lambda}\alpha_1-\sigma}^{-2}\\
&+\int_{\sqrt{\lambda}\alpha_1-\sigma}^{-2} e^{-i\frac{y^2+2y\sigma}{2}}
{\frac{d}{dy}\left(\frac{i}{y+\sigma}
{\frac{d}{dy}\left(\frac{i\mathcal S_1 {(y)}}{y+\sigma}F_1\left(\frac{y+\sigma}{\sqrt{\lambda}},z\right)\right)}
\right)}dy=: D_1+D_2+D_3.
\end{align*}
Since $\sqrt\lambda \alpha_1-\sigma <-3$ and $|\sigma|\le 1$, it can be seen that $|\mathcal S_1(y)/(y+\sigma)|\le 1/(\sqrt {|y|} -\sqrt 2/2) \le \sqrt 2$,
$|1/(y+\sigma)| \le 1$ and $|1/\mathcal S_1(y)|\le 1/{\sqrt 2}$  for $y \in [\sqrt \lambda \alpha_1-\sigma,-2]$. From this together with $\lambda >1$
and the fact that
\begin{align}\label{eq20}
d\mathcal{S}_1(s)/ds=(\mathcal{S}_1(s))^{-1}/2\quad \textrm{for}~ s\in\mathbb{R}\ba\{0\},
\end{align}
we can use direct but patient calculations to obtain that $|D_1|\le C\|F_1(\cdot,z)\|_{C[\alpha_1,\alpha_3]}$, $|D_2|\le C\|F_1(\cdot,z)\|_{C^1[\alpha_1,\alpha_3]}$ and
\begin{align*}
|D_3|\le{C}\|F_1(\cdot,z)\|_{C^2[\alpha_1,\alpha_3]}\int^{-2}_{-\infty}
\frac{1}{{|y+1|}(\sqrt {|y|}-\sqrt 2/2)}dy\le C \|F_1(\cdot,z)\|_{C^2[\alpha_1,\alpha_3]}.
\end{align*}
Thus, it follows that $|\mathcal{V}_1(x,z)| \le {C\|F_1(\cdot,z)\|_{C^2[\alpha_1,\alpha_3]}}{{\lambda}^{-3/4}}$. Similarly to the analysis of  $\mathcal{V}_1$, we also have  $|\mathcal{V}_3(x,z)|\le {C\|F_1(\cdot,z)\|_{C^2[\alpha_1,\alpha_3]}}{{\lambda}^{-3/4}}$. Combining the above estimates of $\mathcal{V}_1$, $\mathcal{V}_2$ and $\mathcal{V}_3$ and the formulas  (\ref{eq:15}) and (\ref{eq:24}) gives that $V(x,z)$ satisfies (\ref{eq:22}) uniformly for all
$\theta_{\hat x}\in [\delta_b/2,\pi-\delta_a/2]$ with $|\sin((\theta_{\hat{x}}-\vartheta_0)/2)|\le (2\sqrt{k_{+}|x|})^{-1}$ and all $z\in\mathbb{R}^2$.

Note that $|\cos\theta_{\hat x}-\cos \vartheta_0|= \left|2\sin\left({(
\vartheta_0+\theta_{\hat x}
)}/{2}\right)\sin\left({(\vartheta_0-\theta_{\hat x})}/{2}\right)\right|\le  {C} {|x|}^{-1/2}$
under the assumption $|\sin((\theta_{\hat{x}}-\vartheta_0)/2)|\le (2\sqrt{k_{+}|x|})^{-1}$.
Thus we have that $|f(2\pi-\theta_{\hat x})| \le {C}{|x|^{- 1/4}}\|g\|_{C[a,b]}.$
This, together with (\ref{eq:22}), implies that $V(x,z)$ has the form (\ref{eq:13}) with $V_{Res}(x,z)$
satisfying (\ref{eq:20}) uniformly for all
$\theta_{\hat x}\in[\delta_b/2,\pi-\delta_a/2]$ with $|\sin((\theta_{\hat{x}}-\vartheta_0)/2)|\le (2\sqrt{k_{+}|x|})^{-1}$ and all $z\in\mathbb{R}^2$.

\textbf{Case 2}: $\theta_{\hat x}\in [\delta_b/2,\pi-\delta_a/2]$ with
$\sin((\theta_{\hat{x}}-\vartheta_0)/2)> (2\sqrt{k_{+}|x|})^{-1}$ (that is, $\sqrt{k_{+}|x|}\beta>1$).
Note that $\alpha_1<0<\alpha_3$.
Then by using \eqref{eq16}, we divide $V$ in \eqref{eq15} into three parts:
\begin{align*}
V(x,z)=&
\int^{\alpha_3}_{\alpha_1} e^{-ik_{+}|x|({\eta^2}/2-1)}F(0,z)d\eta\\
&+
\int^{\alpha_3}_{\alpha_1}e^{-ik_{+}|x|({\eta^2}/2-1)}
{F_1(\eta, z)}\left({\mathcal S_1 {(\eta-\beta)} - \mathcal S_1 {(-\beta)}}\right)d\eta\notag\\
&+  \int^{\alpha_3}_{\alpha_1}e^{-ik_{+}|x|({\eta^2}/2-1)}
{\mathcal S_1 {(-\beta)}\left(F_1(\eta, z)-F_1(0,z)\right)}d\eta\notag\\
=:&\mathcal{J}_{1}(x,z)+ \mathcal{J}_{2}(x,z)+ \mathcal{J}_3(x,z).
\end{align*}
For $\mathcal{J}_1(x,z)$, it easily follows from \eqref{eq17} and Lemma \ref{Le:3} that
\begin{align*}
\mathcal{J}_1(x,z)= \frac{e^{ik_{+}|x|}}{|x|^{\frac 12}}e^{-\frac{i\pi}{4}}{\left(\frac{2\pi}{k_{+}}\right)}^{ 1/ 2}f(2\pi-\theta_{\hat x})e^{-ik_{+}\hat x\cdot z{'}} + \mathcal{J}_{1,Res}(x,z)
\end{align*}
with the residual term $\mathcal{J}_{1,Res}(x,z)$ satisfying
\begin{align*}
|\mathcal{J}_{1,Res}(x,z)|\le {C}|x|^{-1}\|g\|_{C[a,b]}.
\end{align*}
Next, we estimate $\mathcal{J}_2(x,z)$. Note that $\alpha_1<\beta<\alpha_3$. Then with the aid of \eqref{eq20} and the facts that $|\mathcal S_1 (\eta-\beta)+\mathcal S_1 (-\beta)|\geq\sqrt{\beta}>0$ for $\eta\in\mathbb{R}$
and $\int^{\alpha_3}_{\alpha_1}|\mathcal{S}^{-1}_1(\eta-\beta)|d\eta$ is bounded, we can
apply an integration by parts to obtain that
\begin{align*}
& e^{-ik_+|x|}\mathcal{J}_{2}(x,z)=\int_{\alpha_1}^{\alpha_3} e^{-ik_+|x|\eta^2/2}
\frac{\eta F_1(\eta,z)}{\mathcal{S}_1(\eta-\beta)+\mathcal{S}_1(-\beta)}d\eta\\
 &= \frac{i}{k_+|x|}\frac{e^{-ik_+|x|{\eta^2}/2}F_1(\eta, z)}{\mathcal S_1(\eta -\beta) + \mathcal S_1 ({-\beta})}\bigg|^{\alpha_3}_{\eta=\alpha_1}
+\frac{0.5i}{k_+|x|}\int^{\alpha_3}_{\alpha_1} \frac{e^{-ik_+|x|{\eta^2}/2}F_1(\eta, z)}{{(\mathcal S_1 (\eta-\beta)+\mathcal S_1 (-\beta))}^2 \mathcal S_1(\eta -\beta) }d\eta \\
&- \frac{i}{k_+|x|}\int^{\alpha_3}_{{\alpha_1}} \frac{e^{-ik_+|x|{\eta^2}/2} F'_1(\eta, z) }{\mathcal S_1(\eta -\beta)+\mathcal S_1 ({-\beta})}d\eta =: \mathcal{J}^{(1)}_{2}(x,z)+\mathcal{J}^{(2)}_{2}(x,z)+ \mathcal{J}^{(3)}_{2}(x,z).
\end{align*}
It follows from \eqref{eq17} that
\begin{align}\label{eq:18}
|\mathcal{J}^{(1)}_{2}(x,z)|+|\mathcal{J}^{(3)}_{2}(x,z)|\le\frac{C}{|x| \sqrt{\beta}}\|F_1(\cdot,z)\|_{C^1[
\alpha_1,\alpha_3]}.
\end{align}
Since
\begin{align*}
|\mathcal S_1(\eta-\beta)+ \mathcal S_1 ({-\beta})|=\begin{cases}
\sqrt{\beta-\eta}+ \sqrt{\beta},  &\;\eta \in (\alpha_1,\beta),\\
\sqrt {\eta},  & \; \eta\in (\beta,\alpha_3),
\end{cases}
\end{align*}
we have
\begin{align*}
&\left|\mathcal{J}^{(2)}_2(x,z)\right|\\
&\le\frac{\|F_1(\cdot,z)\|_{C[
\alpha_1,\alpha_3]}}{2k_+|x|}
\left[\int^{\beta}_{\alpha_1} \frac{1}{\left(\sqrt{\beta-\eta}+ \sqrt{\beta}\right)^2\sqrt{\beta-\eta}}d\eta + \int^{\alpha_3}_{\beta} \frac{1}{\eta\sqrt{\eta-\beta}}d\eta\right]\\
&= \frac{\|F_1(\cdot,z)\|_{C[
\alpha_1,\alpha_3]}}{k_+|x|}\left[\frac{1}{\sqrt{\beta-\eta}+\sqrt{\beta}}\Big|_{\eta=\alpha_1}^{\beta}+ \int^{\sqrt{\alpha_3-\beta}}_{0} \frac{1}{t^2+\beta}dt\right]\\
&\le \frac{(1+\pi/2)\|F_1(\cdot,z)\|_{C[
\alpha_1,\alpha_3]}}{k_+|x|\sqrt\beta}.
\end{align*}
This, together with (\ref{eq:15}), (\ref{eq:18}) and the assumption $\sqrt{k_{+}|x|}\beta> 1$, yields that
\begin{align*}
|\mathcal{J}_2(x,z)|\le  C {(1+|z|)\|g\|_{C^1[a,b]}}{|x|^{-3/4}}.
\end{align*}
Further, for $\mathcal{J}_3(x,z)$, it follows from the formulas \eqref{eq17}, (\ref{eq:31}) and (\ref{eq:21}) and Lemma \ref{NLe:6} that
\begin{align*}
|\mathcal{J}_3(x,z)|\le {C(1+|z|)^2\|g\|_{C^3[a,b]}}{|x|^{-1}}.
\end{align*}
Based on the above discussions, we now obtain that $V(x,z)$ has the form (\ref{eq:13}) with the residual term $V_{Res}(x,z)$ satisfying (\ref{eq:20}) uniformly for all $\theta_{\hat x} \in  [\delta_b/2,\pi-\delta_a/2]$ with $\sin((\theta_{\hat{x}}-\vartheta_0)/2)> (2\sqrt{k_+|x|})^{-1}$ and all $z\in\mathbb{R}^2$.

\textbf{Case 3}:
$\theta_{\hat x}\in [\delta_b/2,\pi-\delta_a/2]$ with $\sin((\theta_{\hat{x}}-\vartheta_0)/2)<-(2\sqrt{k_{+}|x|})^{-1}$
(that is, $\sqrt{k_{+}|x|}\beta<-1$). Using similar arguments as in Case 2, we can obtain that $V(x,z)$ has the form (\ref{eq:13}) with  the residual
term $V_{Res}(x,z)$ satisfying (\ref{eq:20}) uniformly for all $\theta_{\hat x} \in  [\delta_b/2,\pi-\delta_a/2]$ with $\sin((\theta_{\hat{x}}-\vartheta_0)/2)< -(2\sqrt{k_+|x|})^{-1}$ and all $z\in\mathbb{R}^2$.

\textbf{Part 2.2}: Estimate of $V(x,z)$ with $\theta_{\hat x}\in (0,\delta_b/2)\cup(\pi-\delta_a/2,\pi)$.
In this part, it is easy to see that for $\theta_d\in[a,b]$,
\begin{align*}
&|\sin(\theta_{\hat{x}}+\theta_d)|>\min[\sin(\delta_a/2),\sin(\delta_b/2)],\\
&|\cos\theta_d-\cos\theta_0|
\geq(2/\pi)\min(\sin\delta_a,\sin\delta_b)|\theta_d-\theta_0|.
\end{align*}
Thus by the fact that $\textrm{Supp}(g)\subset (a,b)$ and an integration by parts, we arrive at
$f(2\pi-\theta_{\hat{x}})=0$ and
\begin{align*}
&|V(x,z)|=\left|\frac{i}{k_+|x|}\int^b_a
\frac{d}{d\theta_d}\left(e^{ik_+|x|\cos(\theta_d+\theta_{\hat{x}})}\right)
\frac{e^{-ik_+|z|\cos(\theta_d-\theta_{\hat{z}})}f(\theta_d)}{\sin(\theta_d+\theta_{\hat{x}})}
d\theta_d
\right|\\
&\leq \frac{C \|g\|_{C^1[a,b]}}{|x|}
\left[
(1+|z|)\int^b_a|\mathcal{S}_1(\cos\theta_d-\cos\theta_0)|d\theta_d
+\int^b_a|\mathcal{S}_1(\cos\theta_d-\cos\theta_0)|^{-1}d\theta_d
\right]\\
&\leq C(1+|z|)|x|^{-1}\|g\|_{C^{1}[a,b]}.
\end{align*}
These imply that $V(x,z)$ has the form (\ref{eq:13}) with $V_{Res}(x,z)$ satisfying (\ref{eq:20}) uniformly for all $\theta_{\hat x} \in  (0,\delta_b/2)\cup(\pi-\delta_a/2,\pi)$ and all $z\in\mathbb{R}^2$.

Therefore, we obtain that the statement (\ref{b}) holds and the proof of this lemma is complete.
\end{proof}

Based on Lemma \ref{LL:2}, we have the following uniform far-field expansions of $U_2$ and $U_3$.

\begin{lemma}\label{lem2}
Let $x= |x|\hat x= |x|(\cos \theta_{\hat x}, \sin \theta_{\hat x})\in\mathbb{R}^2_+$ with $\theta_{\hat x} \in (0,\pi)$ and $z\in \R^2$.
Then the following statements hold true.
\begin{enumerate}[(a)]
\item\label{lem2-a}
$U_2(x,z)$ has the asymptotic behavior
\begin{align}\label{eq:33}
U_{2}(x,z) = \frac{e^{ik_{+}|x|}}{|x|^{1/2}}e^{-\frac{i\pi}{4}}{\left(\frac{2\pi}{k_{+}}\right)}^{1/2} \mathcal R(\theta_{\hat x}) e^{-ik_+\hat x\cdot z{'}} + U_{2,Res}(x,z)
\end{align}
with the residual term $U_{2,Res}(x,z)$ satisfying
\begin{empheq}[left={| U_{2,Res}(x,z) |\le \empheqlbrace}]{align}
&  \frac{C E(\theta_{\hat x},z)}{|x|}    &&\textrm{in the case}~ k_+<k_-,   \label{eq:36}
\\
& C\left[\frac{E(\theta_{\hat x},z)}{|x|}+\frac{(1+|z|)^2}{|x|^{3/4}}\right]&&\textrm{in the case}~ k_+>k_-, \label{eq:35}
\end{empheq}
as $|x| \rightarrow +\infty$ uniformly for all $\theta_{\hat x} \in (0,\pi)$ and $z\in\mathbb{R}^2$.
\item\label{lem2-b}
$U_3(x,z)$ has the asymptotic behavior
\begin{align}\label{eq:34}
U_{3}(x,z) = -\frac{e^{ik_{+}|x|}}{|x|^{ 1/2}}e^{-\frac{i\pi}{4}}\left({\frac{2\pi}{k_{+}}}\right)^{ 1/2}e^{-ik_{+}\hat x\cdot z} + U_{3,Res}(x,z)
\end{align}
with the residual term  $U_{3,Res}(x,z)$ satisfying
\begin{align*}
| U_{3,Res}(x,z) | \le C E(\theta_{\hat x},z)|x|^{-1}
\end{align*}
as $|x| \rightarrow +\infty$
uniformly for all $\theta_{\hat x} \in (0,\pi)$ and $z\in\mathbb{R}^2$.
\end{enumerate}
Here, $E(\theta_{\hat x},z)$ is given by (\ref{eq:29}) and $C>0$ is a constant independent of $x$, $z$.
\end{lemma}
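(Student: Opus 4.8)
The plan is to identify both $U_2(x,z)$ and $U_3(x,z)$ with the oscillatory integral $V(x,z)$ of Lemma \ref{LL:2} and then invoke the appropriate part of that lemma. Writing $d=(\cos\theta_d,\sin\theta_d)$ with $\theta_d\in(\pi,2\pi)$ and $x=|x|(\cos\theta_{\hat x},\sin\theta_{\hat x})$, one computes $x'\cdot d=|x|\cos(\theta_d+\theta_{\hat x})$ and $z\cdot d=|z|\cos(\theta_d-\theta_{\hat z})$, so that $U_2(x,z)=V(x,z)$ with $f=\RR_0$, while $U_3(x,z)=-V(x,z')$ with $f\equiv1$ (note $(z')'=z$ and $|z'|=|z|$). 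I will also use the reflection identity
\begin{align*}
\RR_0(2\pi-\theta_{\hat x})=\RR(3\pi-\theta_{\hat x})=\RR(\pi-\theta_{\hat x})=\RR(\theta_{\hat x}),
\end{align*}
which follows from the $2\pi$-periodicity of $\RR$ together with $\mathcal S(-t,n)=\mathcal S(t,n)$, to match the leading terms in \eqref{eq:33} and \eqref{eq:34}.

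Two of the three assertions are then immediate. For $U_3$ one has $f\equiv1\in C^3[\pi,2\pi]$, so Lemma \ref{LL:2}(\ref{a}) gives \eqref{eq:34} with $U_{3,Res}(x,z)=-V_{Res}(x,z')$; since $E(\theta_{\hat x},\cdot)$ depends on $z$ only through $|z|$, the bound $CE(\theta_{\hat x},z)|x|^{-1}$ follows, proving statement (\ref{lem2-b}). For $U_2$ in the case $k_+<k_-$ one has $n>1$, hence $|\cos\theta_d|<n$ on $[\pi,2\pi]$ and $\mathcal S(\cos\theta_d,n)=-i\sqrt{n^2-\cos^2\theta_d}$ is smooth with the denominator of $\RR_0$ bounded away from zero; thus $\RR_0\in C^3[\pi,2\pi]$ and Lemma \ref{LL:2}(\ref{a}) yields \eqref{eq:33} with the residual bound \eqref{eq:36}.

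The delicate case is $U_2$ with $k_+>k_-$, where $n<1$ and $\RR_0$ loses $C^3$ regularity at the two critical angles $\theta_d=\pi+\theta_c$ and $\theta_d=2\pi-\theta_c$, at which $\cos\theta_d=\mp n$. The crucial algebraic observation is that $\mathcal S_j(s)^2=s$, so that, writing $A:=-i\sin\theta_d$ and $S:=\mathcal S(\cos\theta_d,n)=\mathcal S_1(\cos\theta_d-n)\mathcal S_2(\cos\theta_d+n)$, one has $S^2=\cos^2\theta_d-n^2$ and hence $A^2-S^2=n^2-1$ is constant. Rationalizing $\RR_0=(A+S)/(A-S)=1+2S(A+S)/(A^2-S^2)$ then gives the global decomposition
\begin{align*}
\RR_0(\theta_d)=p(\theta_d)+q(\theta_d)\,\mathcal S_1(\cos\theta_d-n)\,\mathcal S_2(\cos\theta_d+n),
\end{align*}
with $p(\theta_d)=1+2(\cos^2\theta_d-n^2)/(n^2-1)$ and $q(\theta_d)=-2i\sin\theta_d/(n^2-1)$ smooth on $[\pi,2\pi]$. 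I would then take a smooth partition of unity $1=\chi_0+\chi_1+\chi_2$ with $\chi_1$ supported in a small interval $(a_1,b_1)\ni\pi+\theta_c$ and $\chi_2$ in $(a_2,b_2)\ni2\pi-\theta_c$, each strictly inside $(\pi,2\pi)$ and avoiding the other critical angle, and $\chi_0$ vanishing near both. This splits $\RR_0=f_a+f_b^{(1)}+f_b^{(2)}$ with $f_a:=\RR_0\chi_0+p(\chi_1+\chi_2)\in C^3[\pi,2\pi]$, whereas on $\textrm{Supp}(\chi_1)$ the factor $\mathcal S_1(\cos\theta_d-n)$ is smooth, so $f_b^{(1)}:=q\chi_1\mathcal S_1(\cos\theta_d-n)\cdot\mathcal S_2(\cos\theta_d+n)$ has exactly the normal form of Lemma \ref{LL:2}(\ref{b}) with $j=2$, $\theta_0=\pi+\theta_c$ and $g=q\chi_1\mathcal S_1(\cos\cdot-n)\in C^3$; symmetrically $f_b^{(2)}$ fits Lemma \ref{LL:2}(\ref{b}) with $j=1$ and $\theta_0=2\pi-\theta_c$. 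Applying Lemma \ref{LL:2}(\ref{a}) to the piece $f_a$ and Lemma \ref{LL:2}(\ref{b}) to the two remaining pieces, the three leading terms sum, via $[f_a+f_b^{(1)}+f_b^{(2)}](2\pi-\theta_{\hat x})=\RR_0(2\pi-\theta_{\hat x})=\RR(\theta_{\hat x})$, to the leading term of \eqref{eq:33}, while the residuals add to $CE(\theta_{\hat x},z)|x|^{-1}+C(1+|z|)^2|x|^{-3/4}$, which is \eqref{eq:35}.

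The main obstacle is precisely this last step: isolating the square-root singularities of $\RR_0$ at the two critical angles and recasting the localized amplitudes in the exact form $\mathcal S_j(\cos\theta-\cos\theta_0)g(\theta)$ demanded by Lemma \ref{LL:2}(\ref{b}). The constancy $A^2-S^2=n^2-1$ (a consequence of $\mathcal S_j(s)^2=s$) is what makes the smooth/singular splitting transparent; the remaining work is bookkeeping, namely verifying the $C^3$ regularity of $p$, $q$, $f_a$ and of the cut-off amplitudes $g$, checking that the denominator $A-S$ of $\RR_0$ stays bounded away from zero on $[\pi,2\pi]$, and confirming that the partition of unity can be chosen so that its supports satisfy the hypotheses $\pi<a_i<b_i<2\pi$ and $\theta_0\in(a_i,b_i)$ of Lemma \ref{LL:2}(\ref{b}).
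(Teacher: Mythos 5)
Your proposal is correct and follows essentially the same route as the paper: reduce $U_2$ and $U_3$ to the oscillatory integral $V(x,z)$ of Lemma \ref{LL:2} via $\RR_0(2\pi-\theta)=\RR(\theta)$, apply part (\ref{a}) for $U_3$ and for $U_2$ when $k_+<k_-$, and for $k_+>k_-$ rationalize $\RR_0$ into the smooth part $(n^2-1+2\sin^2\theta)/(1-n^2)$ plus $2i\sin\theta\,\mathcal S(\cos\theta,n)/(1-n^2)$, localize near the two critical angles with cutoffs, and apply part (\ref{b}). Your decomposition $p+q\,\mathcal S_1(\cos\theta-n)\mathcal S_2(\cos\theta+n)$ and the cut-off pieces $f_b^{(1)},f_b^{(2)}$ coincide exactly with the paper's $\RR_2$, $\RR_1$, $\widetilde\RR_1$, $\widetilde\RR_2$ and $\mathcal G_1$, $\mathcal G_2$.
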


\begin{proof}
We only prove the statement (\ref{lem2-a}). The proof of the statement (\ref{lem2-b})
is similar and easier, and thus we omit it.

Let $\theta_{\hat{z}}$ be given as in Lemma \ref{LL:2}.
Then it easily follows that
\begin{align}\label{eq21}
U_2(x,z) &=\int^{2\pi}_{\pi}  e^{ik_{+}|x|\cos(\theta_d+{\theta}_{\hat x})}e^{-ik_{+}|z|\cos(\theta_d-\theta_{\hat{z}})}\RR_0(\theta_d)d \theta_d. 
\end{align}
We note that
\begin{align}\label{eq23}
\RR_0(2\pi-\theta)=\RR(\theta)\quad \textrm{for}~\theta\in (0,\pi).
\end{align}
We distinguish between the two cases $k_+<k_-$ and $k_+>k_-$ to estimate $U_2$.

\textbf{Case 1}: $k_+<k_-$. Since $n>1$, it easily follows that
$\|\RR_0\|_{C^3[\pi,2\pi]}\leq C$. This, together with \eqref{eq23} and the statement (\ref{a}) of Lemma \ref{LL:2}, implies that $U_2(x,z)$ has the asymptotic behavior (\ref{eq:33}) with $U_{2,Res}(x,z)$ satisfying (\ref{eq:36}) as $|x|\rightarrow+\infty$ uniformly for all $\theta_{\hat x} \in  (0,\pi)$ and $z\in\mathbb{R}^2$.

\textbf{Case 2}: $k_+>k_-$. It is easy to see that for $\theta\in[\pi,2\pi]$,
\begin{align*}
\RR_0(\theta) = \frac{2i\sin\theta\mathcal S(\cos \theta, n)}{{1-n^2}} + \frac{n^2-1+2\sin^2\theta}{{1-n^2}}=:\RR_1(\theta)+\RR_2(\theta).
\end{align*}
Due to $k_+>k_-$, we notice that
\begin{align}\label{eq22}
\mathcal S(\cos\theta,n)&= \mathcal S_{1}(\cos\theta-\cos\theta_c) \mathcal S_{2}(\cos\theta+\cos\theta_c)\no\\
&=\mathcal S_{1}(\cos\theta-\cos\theta^{(2)}_c) \mathcal S_{2}(\cos\theta-\cos\theta^{(1)}_c)
\end{align}
with $\theta^{(1)}_c:=\pi+\theta_c\in(\pi,3\pi/2)$ and $\theta^{(2)}_c:=2\pi-\theta_c\in(3\pi/2,2\pi)$, which implies that $\mathcal S(\cos\theta,n)$
is infinitely differentiable for all $\theta\in[\pi,2\pi]$ except for the points
$\theta^{(1)}_c$ and $\theta^{(2)}_c$.

Let $\varepsilon>0$ be a fixed number such that $[\theta^{(1)}_c-2\varepsilon,\theta^{(1)}_c+2\varepsilon]\subset(\pi+\theta_c/2,3\pi/2)$
and $[\theta^{(2)}_c-2\varepsilon,\theta^{(2)}_c+2\varepsilon]\subset(3\pi/2,2\pi-\theta_c/2)$.
Choose the cutoff functions $\chi_1,\chi_2\in C^\infty_0(\mathbb{R})$ such that
\begin{align*}
0\leq\chi_l\leq 1~\textrm{in}~\mathbb{R},\quad\chi_l=1~\textrm{in}~[\theta^{(l)}_c-\varepsilon,\theta^{(l)}_c+\varepsilon],
\quad\textrm{Supp}(\chi_l)\subset[\theta^{(l)}_c-2\varepsilon,\theta^{(l)}_c+2\varepsilon]
\end{align*}
for $l=1,2$.
Then $\RR_0(\theta)$ can be rewritten as
\begin{align*}
\RR_0(\theta)&=\chi_1(\theta)\RR_1(\theta)
+\chi_2(\theta)\RR_1(\theta)+[(1-\chi_1(\theta)-\chi_2(\theta))\RR_1(\theta)+\RR_2(\theta)]\\
&=:\widetilde{\RR}_1(\theta)+ \widetilde{\RR}_2(\theta) + \widetilde{\RR}_3(\theta),\qquad\qquad\qquad\qquad\qquad\theta\in[\pi,2\pi],
\end{align*}
and thus we have from \eqref{eq21} that $U_2(x,z)=U_{2,1}(x,z)+U_{2,2}(x,z)+U_{2,3}(x,z)$
with
\begin{align*}
U_{2,m}(x,z) &:=\int^{2\pi}_{\pi}  e^{ik_{+}|x|\cos(\theta_d+{\theta}_{\hat x})}e^{-ik_{+}|z|\cos(\theta_d-\theta_{\hat{z}})}\widetilde{\RR}_m(\theta_d)d \theta_d,
\quad m=1,2,3.
\end{align*}
It follows from \eqref{eq22} that
$\widetilde{\RR}_1(\theta)=\mathcal{S}_2(\cos\theta-\cos\theta^{(1)}_c)\mathcal{G}_1(\theta)$
and $\widetilde{\RR}_2(\theta)=\mathcal{S}_1(\cos\theta-\cos\theta^{(2)}_c)\mathcal{G}_2(\theta)$
with
\begin{align*}
\mathcal{G}_1(\theta):=\frac{2i\sin\theta\mathcal{S}_1(\cos\theta-\cos\theta^{(2)}_c)\chi_1(\theta)}{1-n^2},
\quad\mathcal{G}_2(\theta):=\frac{2i\sin\theta\mathcal{S}_2(\cos\theta-\cos\theta^{(1)}_c)\chi_2(\theta)}{1-n^2}.
\end{align*}
Clearly, we have that $\textrm{Supp}(\mathcal{G}_m)\subset(\pi+\theta_c/2,2\pi-\theta_c/2)$
and $\|\mathcal{G}_m\|_{C^3[\pi+\theta_c/2,2\pi-\theta_c/2]}\leq C$ ($m=1,2$) and that
$\|\widetilde{\mathcal{R}}_3\|_{C^3[\pi,2\pi]}\leq C$.
Hence, by using \eqref{eq23}, applying the statement (\ref{b}) of Lemma \ref{LL:2} to $U_{2,m}$ $(m=1,2)$ and applying the statement (\ref{a}) of Lemma \ref{LL:2} to $U_{2,3}$, we have that
$U_2(x,z)$ has the asymptotic behavior (\ref{eq:33}) with $U_{2,Res}(x,z)$ satisfying (\ref{eq:35}) as $|x|\rightarrow+\infty$ uniformly for all $\theta_{\hat x} \in  (0,\pi)$ and $z\in\mathbb{R}^2$.

Therefore, the proof is complete.
\end{proof}

\begin{remark}
It was proved in \cite[Lemma A.3]{xzz19} that $U_3(x,z)$ has
the asymptotic behavior \eqref{eq:34}
with the residual term  $U_{3,Res}(x,z)$ satisfying
\begin{align*}
|U_{3,Res}(x,z)|
\leq&
C\bigg(|z|+\frac{1}{|\sin\frac{\theta_{\hat{x}}}{2}|}
+\frac{1}{|\sin\frac{\pi-\theta_{\hat{x}}}{2}|}
+\frac{1}{|\sin\theta_{\hat{x}}|}
\\
&+\int^{\theta_{\hat{x}}}_0\frac{(1+|z|)^3t^2}{\sin^2t}dt
+\int^{\pi-\theta_{\hat{x}}}_0\frac{(1+|z|)^3 t^2}{\sin^2t}dt\bigg)\frac{1}{|x|}
\end{align*}
as $|x|\rightarrow+\infty$ uniformly for all $\theta_{\hat{x}}\in(0,\pi)$ and $z\in\mathbb{R}^2$.
We note that the statement (\ref{lem2-b}) of Lemma \ref{lem2} improves the above result due to the fact that for any $\theta_{\hat{x}}\in(0,\pi)$,
\begin{align*}
&(1+|z|)^2 + (1+|z|)\left|\int^{\pi-\theta_{\hat{x}}}_{\theta_{\hat{x}}}\frac{1}{\sin^2 t}dt\right|\\
&\leq C(1+|z|)^2\left(\int^{{\pi}/{2}}_{0}\frac{t^2}{\sin^2 t}dt+\int^{\max(\theta_{\hat{x}},\pi-\theta_{\hat{x}})}_{{\pi}/{2}}\frac{t^2}{\sin^2 t}dt\right)\\
&\leq C\left(\int^{\theta_{\hat{x}}}_0\frac{(1+|z|)^2t^2}{\sin^2 t}dt+\int^{\pi-\theta_{\hat{x}}}_0\frac{(1+|z|)^2t^2}{\sin^2 t}dt\right),
\end{align*}
where $C>0$ is a constant independent of $\theta_{\hat{x}}$ and $z$.
\end{remark}

Based on the above lemmas, we now have the following theorem on the relation between $I_F(z)$ given in (\ref{c:8}) and $I_S(z,R)$ given in \eqref{Ne14}.

\begin{theorem} \label{NL4}
Let $z\in \R^2$ and $R>0$ be large enough, then we have $I_S(z,R)= I_F(z)+ I_{S,Res}(z,R)$ with
the residual term
$I_{S,Res}(z,R)$ satisfying
\ben
|I_{S,Res}(z,R)|\le {C(1+|z|)^3}{R^{-1/4}}\quad\textrm{as}~R\rightarrow+\infty
\enn
uniformly for all $z\in\mathbb{R}^2$. Here, $C>0$ is a constant independent of $R$ and $z$.
\end{theorem}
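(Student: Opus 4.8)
The plan is to split off the common leading term of $U=U_1+U_2+U_3$ and to recognise that its contribution to $I_S(z,R)$ is \emph{exactly} $I_F(z)$, after which everything reduces to controlling the residuals. Adding the leading terms in \eqref{eq24}, \eqref{eq:33} and \eqref{eq:34} (from Lemmas \ref{LL:1} and \ref{lem2}), I would write, for $x=|x|\hat x\in\R^2_+$ with $\theta_{\hat x}\in(0,\pi)$,
\[
U(x,z)=U_{main}(x,z)+U_{Res}(x,z),\qquad U_{main}(x,z):=\frac{e^{ik_{+}|x|}}{\sqrt{|x|}}\,\Phi(\hat x,z),
\]
where $\Phi(\hat x,z)$ is precisely the expression inside $|\cdot|$ in the definition \eqref{c:8} of $I_F(z)$, and $U_{Res}=U_{1,Res}+U_{2,Res}+U_{3,Res}$. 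Parametrising $\pa B^+_R$ by $x=R\hat x$ with $ds(x)=R\,ds(\hat x)$, the factor $|x|^{-1/2}$ in $U_{main}$ cancels the arclength measure, so that $\int_{\pa B^+_R}|U_{main}(x,z)|^2ds(x)=\int_{\Sp^1_+}|\Phi(\hat x,z)|^2ds(\hat x)=I_F(z)$. Expanding the square in \eqref{Ne14} then yields
\[
I_{S,Res}(z,R)=2\real\!\int_{\pa B^+_R}\!U_{main}\,\overline{U_{Res}}\,ds(x)+\int_{\pa B^+_R}\!|U_{Res}|^2\,ds(x),
\]
and since $|U_{main}|\le C|x|^{-1/2}$ uniformly in $z$ (as $|\Phi|\le C$ by \eqref{eq28} and Lemma \ref{lem1}), the whole problem reduces to estimating $\int_{\pa B^+_R}|U_{Res}|\,ds(x)$ and $\int_{\pa B^+_R}|U_{Res}|^2\,ds(x)$.

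Combining the residual estimates of Lemmas \ref{LL:1} and \ref{lem2} gives, on $\pa B^+_R$, $|U_{Res}|\le CE(\theta_{\hat x},z)R^{-1}$ when $k_+<k_-$ and $|U_{Res}|\le C\big[E(\theta_{\hat x},z)R^{-1}+(1+|z|)^2R^{-3/4}\big]$ when $k_+>k_-$; on the other hand Lemma \ref{NL1} (estimates \eqref{eq:t1}, \eqref{eq:t3}) and Remark \ref{re2} furnish the crude but $\theta_{\hat x}$-uniform bound $|U_{Res}|\le|U|+|U_{main}|\le C(1+|z|)R^{-1/2}$. The difficulty is that $E(\theta_{\hat x},z)$ from \eqref{eq:29} fails to be integrable over $\Sp^1_+$: near $\theta_{\hat x}=0$ and $\theta_{\hat x}=\pi$ each of its terms behaves like $C(1+|z|)\theta_{\hat x}^{-1}$, respectively $C(1+|z|)(\pi-\theta_{\hat x})^{-1}$. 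I would therefore split $\Sp^1_+$ at level $\delta=R^{-1/2}$ into the interior arc $\{\theta_{\hat x}\in[\delta,\pi-\delta]\}$, where the sharp bound is used together with $\int_\delta^{\pi-\delta}E\,d\theta\le C(1+|z|)^2\log(1/\delta)$ and $\int_\delta^{\pi-\delta}E^2\,d\theta\le C\big[(1+|z|)^2\delta^{-1}+(1+|z|)^3\log(1/\delta)+(1+|z|)^4\big]$, and the two endpoint arcs of total measure $2\delta$, where the crude uniform bound is used.

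The hard part will be securing the uniform constant $(1+|z|)^3$ while keeping the rate $R^{-1/4}$, because for $k_+>k_-$ the critical-angle residual $(1+|z|)^2R^{-3/4}$ contributes to the quadratic term an amount $\int_{\pa B^+_R}\big((1+|z|)^2R^{-3/4}\big)^2ds(x)\sim(1+|z|)^4R^{-1/2}$, which is \emph{not} dominated by $(1+|z|)^3R^{-1/4}$ for all $z$. To resolve this I would distinguish two regimes. When $1+|z|>R^{1/4}$ I would not expand at all: the crude bounds $|U|\le C(1+|z|)R^{-1/2}$ and $|\Phi|\le C$ give $I_S(z,R)\le C(1+|z|)^2$ and $I_F(z)\le C$, whence $|I_{S,Res}|\le C(1+|z|)^2\le C(1+|z|)^3R^{-1/4}$ because $R^{-1/4}<(1+|z|)^{-1}$ in this regime. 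When $1+|z|\le R^{1/4}$, every surviving contribution of the interior/endpoint estimates—after multiplying by $|U_{main}|\le CR^{-1/2}$ in the cross term or squaring in the quadratic term, and with $\delta=R^{-1/2}$—is of the form $C(1+|z|)^aR^{-b}$ with $a\le4$, and is reduced to $C(1+|z|)^3R^{-1/4}$ using $1+|z|\le R^{1/4}$ (for instance $(1+|z|)^4R^{-1/2}\le(1+|z|)^3R^{1/4}R^{-1/2}=(1+|z|)^3R^{-1/4}$), the logarithmic factors $\log(1/\delta)\sim\log R$ being absorbed by a power of $R$ for $R$ large. Assembling the two regimes gives the claimed estimate; the remainder is careful bookkeeping of the oscillatory-integral bounds once this $z$-regime splitting is in place.
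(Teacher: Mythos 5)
Your proposal is correct and its skeleton coincides with the paper's proof: the same decomposition $U=U_0+U_{Res}$ with $U_0(x,z)=e^{ik_+|x|}|x|^{-1/2}\Phi(\hat x,z)$ assembled from the leading terms of Lemmas \ref{LL:1} and \ref{lem2}, the same observation that $\int_{\pa B_R^+}|U_0|^2ds(x)=I_F(z)$ because $ds(x)=R\,ds(\hat x)$ cancels $|x|^{-1}$, the same crude bounds $|U_0|\le CR^{-1/2}$, $|U|,|U_{Res}|\le C(1+|z|)R^{-1/2}$, and the same angular cutoff near $\theta_{\hat x}=0,\pi$ to control the non-integrable factor $E(\theta_{\hat x},z)$. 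Where you diverge is in the bookkeeping of the error, and the paper's version is cleaner on two counts. First, instead of expanding into $2\real\int U_0\overline{U_{Res}}+\int|U_{Res}|^2$, the paper writes $|U|^2-|U_0|^2=U_0\overline{U_{Res}}+\overline{U}\,U_{Res}$ and bounds the second product by the \emph{crude} estimate $|U|\le C(1+|z|)R^{-1/2}$ times the refined estimate $|U_{Res}|\le C(1+|z|)^2R^{-3/4}$ on the interior arc; this caps the $z$-power at $(1+|z|)^3$ automatically, so the $(1+|z|)^4R^{-1/2}$ obstruction you identify never arises and no splitting into the regimes $1+|z|\lessgtr R^{1/4}$ is needed. (Your two-regime fix is valid --- note only that in the regime $1+|z|>R^{1/4}$ the inequality should read $R^{-1/4}>(1+|z|)^{-1}$, not $<$, though your conclusion $(1+|z|)^2\le(1+|z|)^3R^{-1/4}$ is the right one.) Second, the paper takes $\delta=R^{-1/4}$ and uses the pointwise bound $E(\theta_{\hat x},z)\le C(1+|z|)^2\delta^{-1}$ on $[\delta,\pi-\delta]$, which gives $|U_{Res}|\le C(1+|z|)^2R^{-3/4}$ there in one line, whereas your choice $\delta=R^{-1/2}$ forces you to integrate $E$ and $E^2$ and to absorb logarithms; both routes close, but yours trades one algebraic identity for a case analysis and some extra integrals. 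The substance of your argument is sound.
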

\begin{proof}
We only consider the case $k_+>k_-$ since the proof for the case $k_+<k_-$ is similar.
Let $R$ be large enough throughout the proof.
It follows from \eqref{eq33}, \eqref{eq24}, \eqref{eq:33} and \eqref{eq:34} that
for $x\in\pa B^+_R$ and $z\in\mathbb{R}^2$,
we can write
$U(x,z)=U_0(x,z)+U_{Res}(x,z)$ with $U_0(x,z)$ and $U_{Res}(x,z)$ given by
\begin{align*}
&U_0(x,z):= \\
&\frac{e^{ik_{+}|x|}}{\sqrt{|x|}}\left[\int_{\Sp^1_{-}}u^{\infty}(\hat x, d)e^{-ik_{+}z\cdot d}ds(d)+{\bigg(\frac{2\pi}{k_{+}}\bigg)}^{\frac 12}e^{{-i\pi}/{4}}\left(\RR(\theta_{\hat x})e^{-ik_{+}\hat x\cdot z{'}}-e^{-ik_{+}\hat x \cdot z}\right)\right],\\
&U_{Res}(x,z):= U_{1,Res}(x,z)+ U_{2,Res}(x,z) + U_{3,Res}(x,z).
\end{align*}
Then it is clear that
\begin{align}\label{c:5}
&I_S(z,R)=I_F(z)+\int_{\partial B^+_R}\left[U_0(x,z)\overline{U_{Res}(x,z)}+\overline{U(x,z)}U_{Res}(x,z)\right]
ds(x).
\end{align}
By using the fact that $\|\mathcal{R}\|_{C[0,\pi]}\leq C$, the formula \eqref{eq33}, Lemma \ref{NL1} and Remark \ref{re2},
we obtain that for $x\in\partial B^+_R$ and $z\in\mathbb{R}^2$,
\begin{align}\label{eq35}
|U_0(x,z)|\leq CR^{-1/2}\quad\textrm{and}\quad
|U(x,z)|,~|U_{Res}(x,z)|\leq C(1+|z|)R^{-1/2}.
\end{align}
Let $\delta:=R^{-1/4}$ to be small enough
and define $\partial {B^{+}_{R,\delta}} :=\{x=R(\cos\theta_{\hat x},\sin\theta_{\hat x})\in \R^2: \theta_{\hat x}\in(0,\delta)\cup(\pi-\delta,\pi)\}$. It is easy to see that for $\theta_{\hat{x}}\in[\delta,\pi-\delta]$
and $z\in\mathbb{R}^2$,
\begin{align*}
|E(\theta_{\hat{x}},z)|\leq (1+|z|)^2 + C\delta^{-1} +(1+|z|)\int^{\pi-\delta}_{\delta}\frac{1}{\sin^2 t}dt\leq C(1+|z|)^2\delta^{-1},
\end{align*}
where $E(\theta_{\hat{x}},z)$ is given as in \eqref{eq:29}.
Thus we have from Lemmas \ref{LL:1} and \ref{lem2} that for $x\in\pa B^+_R\ba\pa B^+_{R,\delta}$ and $z\in\mathbb{R}^2$,
\begin{align*}
|U_{Res}(x,z)|\leq C\left[\frac{1}{R^{3/4}}+\frac{(1+|z|)^2}{\delta R}+\frac{(1+|z|)^2}{R^{3/4}}\right]\leq
C(1+|z|)^2 R^{-3/4}.
\end{align*}
This, together with \eqref{c:5} and \eqref{eq35}, implies that
\begin{align*}
&|I_S(z,R)-I_F(z)|\\
&=\left|\left\{\int_{\partial B^+_R\ba \pa B^+_{R,\delta}}+\int_{\pa B^+_{R,\delta}}\right\}\left[U_0(x,z)\overline{U_{Res}(x,z)}+\overline{U(x,z)}U_{Res}(x,z)\right]
ds(x)\right|\\
&\leq CR\left[\frac{1}{R^{1/2}}\frac{(1+|z|)^2}{R^{3/4}}+\frac{1+|z|}{R^{1/2}}\frac{(1+|z|)^2}{R^{3/4}}\right]
+CR\delta\left[\frac{1}{R^{1/2}}\frac{1+|z|}{R^{1/2}}+\frac{1+|z|}{R^{1/2}}\frac{1+|z|}{R^{1/2}}\right]\\
&\leq C(1+|z|)^3 R^{-1/4}.
\end{align*}
Therefore, the proof is complete.
\end{proof}

\subsection{Direct imaging methods}
\label{sec2}

With the analysis in Sections \ref{sec1} and \ref{sec:4},
now we are ready to study
the direct imaging methods for the inverse problems (IP1) and (IP2).
In the rest of the paper, let $K$ be a bounded domain containing the local perturbation $\Gamma_p$ of the locally rough surface $\Gamma$.
Note that the imaging function $I_F(z)$ is independent of the radius $R$.
Thus it can be seen from Theorems \ref{th:1} and \ref{NL4} that when $R$ is sufficiently large, the imaging functions $I_{P}(z,R)$ and $I_F(z)$ are approximately equal to the function
$I_S(z,R)$ for any $z\in K$. This means that when $R$ is sufficiently large, $I_{P}(z,R)$ and $I_F(z)$ have similar properties as $I_S(z,R)$ for $z\in K$.

Next, we study the properties of $I_S(z,R)$ by employing the theory of scattering by a penetrable unbounded rough surface.
To this end, we introduce some notations. For $b\in \mathbb{R}$, let $U_b^{\pm}:=\big\{(x_1,x_2)\in \R^2: x_2\gtrless b \big\}$  and $\Gamma_b :=\big\{(x_1,x_2)\in \R^2 : x_2= b \big\}$.
For $V\subset \R^n$ ($n=1,2$), denote by $BC(V)$ the set of bounded and continuous functions in $V$, a Banach space under the norm $\|\phi\|_{\infty,V}:=\sup_{x\in V}|\phi(x)|$.
For $0<\alpha <1$ and $V\subset \R^n$ ($n=1,2$), denote by $BC^{0,\alpha}(V)$ the Banach space of functions $\phi \in BC(V)$, which are uniformly H\"{o}lder continuous with exponent $\alpha$,
with the norm $\|\cdot\|_{0,\alpha,V}$ defined by $\|\phi\|_{0,\alpha,V}:= \|\phi\|_{\infty,V}+ \sup_{x,y\in V,x\ne y}{|\phi(x)-\phi(y)|}/{|x-y|^{\alpha}}$. Further, for $V\subset\mathbb{R}^2$, define $BC^{1}(V):=\big\{\phi \in BC(V) : \partial_j\phi \in BC(V), \; j=1,2\big\}$ with the norm $\|\phi\|_{1,V}:=\|\phi\|_{\infty,V}+ \sum^2_{j=1}\|\partial_j\phi\|_{\infty,V}$, where $\partial_j\phi$ denotes the derivative ${\partial{\phi}}/{\partial x_j}$ for $j=1,2$.
Moreover, for $0<\alpha<1$ and the surface $\Gamma$, let $BC^{1,\alpha}(\Gamma):=\{\varphi\in BC(\Gamma)
:\Grad \varphi\in BC^{0,\alpha}(\Gamma)\}$
under the norm $\|\varphi\|_{1,\alpha,\Gamma}:=\|\varphi\|_{\infty,\Gamma}
+\|\Grad \varphi\|_{0,\alpha,\Gamma}$
where $\textrm{Grad}$ denotes the surface gradient.
Then the scattering problem by a penetrable unbounded rough surface can be formulated as follows.

\textbf{Transmission scattering problem (TSP)}. Let $\alpha \in (0,1)$,
$h_1>\sup_{x_1\in\mathbb{R}}h_\Gamma(x_1)$ and
$h_2<\inf_{x_1\in\mathbb{R}}h_\Gamma(x_1)$.
Given $g_1\in BC^{1,\alpha}(\Gamma)$ and $g_2\in BC^{0,\alpha}(\Gamma)$, determine a pair of
solutions $(v_1,v_2)$ with $v_1\in C^2(\Omega_{+}) \cap BC^{1}(\overline{\Omega_{+}}\se U^+_{h_1})$
and $v_2\in C^2(\Omega_{-})\cap BC^{1}(\overline{\Omega_{-}}\se U^-_{h_2})$
such that the following hold:

(i) $v_1$ is a solution of the Helmholtz equation $\Delta v_1+ k^2_{+} v_1=0$ in $\Omega_{+}$
and $v_2$ is a solution of the Helmholtz equation $\Delta v_2+k^2_{-}v_2 =0$ in $\Omega_{-}$.

(ii) $v_1$ and $v_2$ satisfy the transmission boundary condition
\begin{align*}
v_1-v_2=g_1,\quad \partial v_1/\partial{\nu}-\partial v_2/\partial{\nu}=g_2\quad\textrm{on}~ \Gamma.
\end{align*}

(iii) $v_1$ and $v_2$ satisfy the growth conditions in the $x_2$ direction:
for some $\beta\in \mathbb{R}$,
\be\label{c:3}
\sup_{x\in\Omega_{+}} |x_2|^{\beta}|v_1(x)|<+\infty, \; \quad
\sup_{x\in\Omega_{-}} |x_2|^{\beta}|v_2(x)|<+\infty.
\en

(iv) $v_1$ satisfies the upward propagating radiation condition (UPRC): for some
$\phi_1\in L^{\infty}(\Gamma_{h_1})$,
\begin{align}\label{eq36}
v_1(x)=2\int_{\G_{h_1}}\frac{\partial\Phi_{k_{+}}(x,y)}{\partial y_2}\phi_1(y)ds(y),\quad x\in U^+_{h_1}.
\end{align}
$v_2$ satisfies the downward propagating radiation condition (DPRC): for some $\phi_2\in L^{\infty}(\Gamma_{h_2})$,
\begin{align}\label{eq37}
v_2(x)= -2\int_{\Gamma_{h_2}}\frac{\partial \Phi_{k_{-}}(x,y)}{\partial y_2}\phi_2(y)ds(y),\quad x\in U^-_{h_2}.
\end{align}
Here, $\Phi_{k}(x,y)$ with $k>0$ is
the fundamental solution of the Helmholtz equation $\Delta u + k^2 u=0$ in two dimensions, that is,
$\Phi_{k}(x,y):= ({i}/4) H^1_0(k|x-y|)$, $x,y\in\mathbb{R}^2$, $x\neq y$,
where $H^1_0$ denotes the Hankel function of the first kind of order zero.

The well-posedness of the problem (TSP) has been established in \cite{CSZ1999,DTS,LI2016} by using the integral equation method.

To proceed further, we need the following property of the total-field $u^{tot}(x,d)$.

\begin{lemma}\label{lem3}
For any $d\in\mathbb{S}^1_-$, we have $u^{tot}(\cdot,d)|_{\Omega_\pm}\in C^2(\Omega_\pm)$
and $u^{tot}(\cdot,d)\in BC^1(\mathbb{R}^2)$.
\end{lemma}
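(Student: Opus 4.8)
The plan is to prove the two assertions by different means. The interior $C^2$ regularity follows from classical elliptic theory: by \eqref{e:1.1} the total field $u^{tot}(\cdot,d)$ satisfies the constant-coefficient Helmholtz equation $\Delta u^{tot}+k_\pm^2 u^{tot}=0$ in each of the open sets $\Omega_\pm$, and since this operator is elliptic with analytic coefficients, interior elliptic regularity shows that $u^{tot}(\cdot,d)$ is smooth (indeed real-analytic) in $\Omega_\pm$; in particular $u^{tot}(\cdot,d)|_{\Omega_\pm}\in C^2(\Omega_\pm)$. The global $BC^1$ regularity is then obtained by combining transmission regularity across $\Gamma$ with the explicit form of the reference wave and the far-field decay of the scattered field away from $\Gamma$.

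For $u^{tot}(\cdot,d)\in BC^1(\mathbb{R}^2)$, I would first establish $C^1$ regularity across $\Gamma$. By Theorem \ref{thm2}, $u^{tot}(\cdot,d)\in H^1_{loc}(\mathbb{R}^2)$ is a weak solution of the transmission problem determined by the equations in \eqref{e:1.1}, the homogeneous transmission conditions \eqref{eq5}, and the piecewise-constant wave numbers $k_\pm$. Since $h_\Gamma\in C^2(\mathbb{R})$ has compact support, $\Gamma$ is a $C^2$ curve that joins its flat part smoothly, so there are no corners. Applying Schauder-type regularity up to the interface for transmission problems, $u^{tot}(\cdot,d)$ extends to a function of class $C^{1,\alpha}$ up to $\Gamma$ from each side for any $\alpha\in(0,1)$. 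The common trace of $u^{tot}(\cdot,d)$ on $\Gamma$ is well defined by $[u^{tot}]=0$, its normal derivative matches across $\Gamma$ by $[\partial u^{tot}/\partial\nu]=0$, and its tangential derivative matches because, on each side, it equals the surface derivative of this common trace. Hence the full gradient is continuous across $\Gamma$ and $u^{tot}(\cdot,d)\in C^1$ in a neighborhood of $\Gamma$; together with the interior regularity this yields $u^{tot}(\cdot,d)\in C^1(\mathbb{R}^2)$, so that $u^{tot}(\cdot,d)$ and $\nabla u^{tot}(\cdot,d)$ are bounded on every compact set.

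It then remains to bound $u^{tot}(\cdot,d)$ and $\nabla u^{tot}(\cdot,d)$ at infinity. I would fix $R_0>0$ large enough that $\Gamma_p\subset B_{R_0}$, so that outside $B_{R_0}$ the interface coincides with $\Gamma_0$ and $u^{tot}=u^0+u^s$. The reference wave $u^0(\cdot,d)$ is bounded with bounded gradient: in $\mathbb{R}^2_+$ it is the sum of the unit-modulus plane wave $u^i$ and the reflected plane wave $u^r$ (bounded via \eqref{eq28}), while in $\mathbb{R}^2_-$ it equals $u^t$, which by \eqref{eq1} is a genuine plane wave when $n^{-1}|\cos\theta_d|\le 1$ and an evanescent wave decaying as $x_2\to-\infty$ when $n^{-1}|\cos\theta_d|>1$; in either case $u^0$ and $\nabla u^0$ are bounded, and they are continuous across $\Gamma_0$ by \eqref{eq:0.2}. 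For the scattered field, Lemma \ref{Le:my} (with the uniform bound on $u^\infty(\cdot,d)$) gives $|u^s(x,d)|\le C|x|^{-1/2}$ as $|x|\to+\infty$ in $\Omega_+$, and the analogous decay holds in $\Omega_-$ by the radiation condition \eqref{eq2} and the far-field representation. Since outside $B_{R_0}$ the field $u^s$ solves a homogeneous transmission problem across the flat interface $\Gamma_0$ (with constant wave numbers $k_\pm$ on the two sides), a local gradient estimate for this problem on unit balls upgrades the pointwise decay to $|\nabla u^s(x,d)|\le C|x|^{-1/2}$. Combining these bounds shows that $u^{tot}(\cdot,d)$ and $\nabla u^{tot}(\cdot,d)$ are bounded outside $B_{R_0}$, and with the compact-set bound above we conclude $u^{tot}(\cdot,d)\in BC^1(\mathbb{R}^2)$.

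I expect the main obstacle to be the $C^1$ regularity across the $C^2$ interface $\Gamma$: unlike the interior estimate, this requires transmission (Schauder) regularity up to the interface and a careful verification that all three derivative components match across $\Gamma$, using each of the two transmission conditions in \eqref{eq5} together with the continuity of the trace. The boundedness at infinity is comparatively routine, given the explicit structure of $u^0$ and the far-field asymptotics already recorded in Lemma \ref{Le:my}.
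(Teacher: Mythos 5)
Your proposal is correct in outline and reaches the same conclusion, but it is organized differently from the paper and has one step that is under-justified as written. The paper first shows $u^{tot}(\cdot,d)\in BC(\mathbb{R}^2)$ --- continuity from $H^2_{loc}$ regularity (the transmission conditions \eqref{eq5} guarantee that $\Delta u^{tot}=-k^2(\cdot)\,u^{tot}$ holds distributionally across $\Gamma$ with an $L^2_{loc}$, in fact $L^\infty_{loc}$, right-hand side), and boundedness from the boundedness of $u^0$ together with the \emph{uniform} far-field asymptotics of $u^s$ in both $\Omega_+$ and $\Omega_-$ from \cite{LYZZ3} --- and then upgrades $BC$ to $BC^1$ in a single stroke via the local regularity estimate of \cite[Theorem 2.7]{CSZ1998}. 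That estimate only needs $u^{tot}\in L^\infty$ and the equation with an $L^\infty$ coefficient, so it simultaneously delivers the uniform gradient bound at infinity and the continuity of $\nabla u^{tot}$ across $\Gamma$; your separate appeal to Schauder-type transmission regularity up to the $C^2$ interface, followed by matching tangential and normal derivatives, is workable but heavier than necessary, and your "local gradient estimate on unit balls" in the exterior region is essentially the same tool the paper applies everywhere at once.

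The one point you should not treat as routine is the decay of $u^s$ in the \emph{lower} half-plane. You assert that $|u^s(x,d)|\le C|x|^{-1/2}$ in $\Omega_-$ "by the radiation condition \eqref{eq2} and the far-field representation." In a two-layered medium the Sommerfeld radiation condition does not by itself yield an $O(|x|^{-1/2})$ bound that is uniform over all observation directions $\hat{x}\in\mathbb{S}^1_-$ (the difficulty is precisely near the interface directions and the critical angles); establishing this uniformity is the main content of \cite[Theorems 13 and 14]{LYZZ3}, which the paper cites explicitly at this point (including the expansion $u^s=e^{ik_-|x|}|x|^{-1/2}u^\infty(\hat x,d)+O(|x|^{-3/4})$ in $\Omega_-$ with $u^\infty(\cdot,d)\in C(\ov{\Sp^1_-})$). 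With that citation supplied, your argument closes; without it, the bound in $\Omega_-$ is an unproved claim rather than a consequence of \eqref{eq2}.
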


\begin{proof}
Let $d\in\mathbb{S}^1_-$.
It easily follows from Theorem \ref{thm2} and elliptic regularity estimates (see, e.g., \cite[Section 6.3]{E10})
that $u^{tot}(\cdot,d)\in H^2_{loc}(\mathbb{R}^2)$ and that
$u^{tot}(\cdot,d)|_{V_0}\in H^\ell(V_0)$ for any positive integer $\ell$ and any bounded open set $V_0$ satisfying $\ov{V_0}\subset \Omega_{+}\cup \Omega_{-}$,
which implies
that
$u^{tot}(\cdot,d)\in C(\mathbb{R}^2)$
and
$u^{tot}(\cdot,d)|_{\Omega_\pm}\in C^2(\Omega_\pm)$.
Moreover,
it can be seen from \cite[Theorems 13 and 14]{LYZZ3} that for both the case $k_+<k_-$ and the case $k_+>k_-$, the scattered field $u^s(x,d)$ has the asymptotic behavior
\ben
u^{s}(x,d)=\frac{e^{ik_{-}|x|}}{\sqrt{|x|}}u^{\infty}(\hat x, d)+ u^s_{Res}(x,d)\quad\textrm{for}~x\in\Omega_-\ba\ov{B_R},
\enn
where the far-field patten $u^\infty(\hat{x},d)$ of the scattered field $u^s(x,d)$  satisfies $u^\infty(\cdot,d)\in C(\ov{\mathbb{S}^1_-})$ and $u^s_{Res}(x,d)$
satisfies
\begin{align*}
&\left|u^s_{Res}(x,d)\right|\le{C}{|x|^{-3/4}},\quad |x|\rightarrow+\infty,
\end{align*}
uniformly for all $\theta_{\hat{x}}\in(\pi,2\pi)$ and $d \in \Sp^1_{-}$ (the expression of $u^\infty(\hat{x},d)$ with $\hat{x},d\in\mathbb{S}^1_-$
can be seen in \cite[formula (110)]{LYZZ3}, which is similar to \eqref{eq29}).
Note further that $u^0(\cdot,d)\in BC(\R^2)$.
Thus, it follows from the above discussions and Lemma \ref{Le:my} that $u^{tot}(\cdot,d)\in BC(\R^2)$. This, together with the local regularity estimate in \cite[Theorem 2.7]{CSZ1998},
implies that $u^{tot}(\cdot,d)\in BC^1(\R^2)$.
\end{proof}

For $x\in \Omega_{+}$ and $z\in \R^2$, let $U(x,z)$ be given as in \eqref{eq38}, which is
involved in $I_S(z,R)$.
For $x\in \Omega_{-}$ and $z\in \R^2$,
define $V(x,z):= \int_{\s^1_{-}}u^{tot}(x,d)e^{-ik_+z\cdot d}ds(d)$.
Then with the aid of Lemma \ref{lem3}, we show in the following theorem that for any fixed $z\in\mathbb{R}^2$,
the pair of functions $(U(x,z),V(x,z))$ is the unique solution to the problem (TSP) with the boundary data related to the Bessel function of order $0$.

\begin{theorem}\label{thm1}
For any fixed $z\in\mathbb{R}^2$, the pair of functions $(U(x,z),V(x,z))$ solves the problem (TSP) with the boundary data
\begin{align*}
g_1(x)= -2\pi J_0(k_{+}|x-z|),\quad g_2(x)=-2\pi \partial J_0(k_{+}|x-z|)/\partial{\nu}(x),\quad x\in\Gamma,
\end{align*}
where $J_0$ is the Bessel function of order $0$.
\end{theorem}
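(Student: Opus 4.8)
The plan is to reduce everything to a single globally defined superposition field plus the explicit free Bessel field. For fixed $z\in\R^2$ define
\[
\Phi(x,z):=\int_{\Sp^1_{-}}u^{tot}(x,d)e^{-ik_{+}z\cdot d}\,ds(d),\qquad
\Psi(x,z):=\int_{\Sp^1}e^{ik_{+}(x-z)\cdot\theta}\,ds(\theta)=2\pi J_0(k_{+}|x-z|),
\]
the last equality being the Jacobi--Anger (Funk--Hecke) formula. The first observation, which is the heart of the argument, is that
\[
U(x,z)=\Phi(x,z)-\Psi(x,z)\ \text{ for } x\in\Omega_{+},\qquad V(x,z)=\Phi(x,z)\ \text{ for } x\in\Omega_{-}.
\]
Indeed, rewriting $U$ from \eqref{eq38} as $\Phi-\int_{\Sp^1_{-}}e^{ik_{+}(x-z)\cdot d}ds(d)+U_3(x,z)$ and applying the reflection substitution $d=(d_1,d_2)\mapsto d'=(d_1,-d_2)$, which maps $\Sp^1_{-}$ onto $\Sp^1_{+}$ and satisfies $x'\cdot d=x\cdot d'$ and $z'\cdot d=z\cdot d'$, turns $U_3$ into $-\int_{\Sp^1_{+}}e^{ik_{+}(x-z)\cdot\omega}ds(\omega)$; adding the two half-circle integrals reconstitutes $-\Psi$. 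Thus the entire transmission jump will be carried by the explicit field $\Psi$, while $\Phi$ is smooth across $\Gamma$.

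With this reduction the four requirements of (TSP) follow in turn. For (i), $\Psi$ is an entire solution of the Helmholtz equation with wavenumber $k_{+}$, and differentiating under the integral sign (justified by Lemma \ref{lem3}) shows $\Phi$ solves $\Delta\Phi+k_{\pm}^2\Phi=0$ in $\Omega_{\pm}$; hence $U=\Phi-\Psi$ solves the $k_{+}$-equation in $\Omega_{+}$ and $V=\Phi$ solves the $k_{-}$-equation in $\Omega_{-}$. For the transmission condition (ii) the key point is that $\Phi(\cdot,z)\in BC^1(\R^2)$, inherited from $u^{tot}(\cdot,d)\in BC^1(\R^2)$ in Lemma \ref{lem3}; thus $\Phi$ and its gradient have no jump across $\Gamma$, and so on $\Gamma$
\[
U-V=-\Psi=-2\pi J_0(k_{+}|x-z|)=g_1,\qquad
\frac{\partial U}{\partial\nu}-\frac{\partial V}{\partial\nu}=-\frac{\partial\Psi}{\partial\nu}=g_2 .
\]
Since $J_0(k_{+}|\cdot-z|)$ is real-analytic and $\Gamma$ is $C^2$, the data satisfy $g_1\in BC^{1,\alpha}(\Gamma)$ and $g_2\in BC^{0,\alpha}(\Gamma)$. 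The growth condition (iii) holds with $\beta=0$, as $u^{tot}$ is bounded (Lemma \ref{lem3}) and $J_0$ is bounded, whence $U,V$ are bounded; the required local regularity $U\in C^2(\Omega_{+})\cap BC^1(\overline{\Omega_{+}}\se U^+_{h_1})$, and its analogue for $V$, is inherited from $u^{tot}$ and the smoothness of $\Psi$.

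The main obstacle is the radiation condition (iv), which is nonlocal. For $x\in U^+_{h_1}\subset\R^2_+$ the decomposition \eqref{eq33} gives $U=U_1+U_2+U_3$: the term $U_1=\int_{\Sp^1_{-}}u^s(x,d)e^{-ik_{+}z\cdot d}ds(d)$ inherits the UPRC because each $u^s(\cdot,d)$ satisfies the Sommerfeld condition \eqref{eq2} and decays by Lemma \ref{Le:my}, while $U_2$ and $U_3$ are superpositions of plane waves $e^{ik_{+}x\cdot\omega}$ with directions $\omega\in\Sp^1_{+}$, each of which satisfies \eqref{eq36}; the UPRC being linear with a fixed kernel, it is preserved under integration in $d$ against the bounded density, and the trace on $\Gamma_{h_1}$ lies in $L^\infty$. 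Symmetrically, for $x\in U^-_{h_2}$ the field $V$ is the $d$-superposition of the downward transmitted wave $u^t$ and the radiating $u^s$ (which satisfies the Sommerfeld condition in $\Omega_{-}$), so $V$ satisfies the DPRC \eqref{eq37}. This establishes that $(U,V)$ solves (TSP) with the stated data; the uniqueness asserted in the surrounding text then follows at once from the well-posedness of (TSP) proved in \cite{CSZ1999,DTS,LI2016}.
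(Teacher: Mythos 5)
Your proof is correct and follows essentially the same route as the paper: both rest on the observation that $u^{tot}-u^{i}$ in $\Omega_{+}$ and $u^{tot}$ in $\Omega_{-}$ jump by $-u^{i}$ across $\Gamma$, so that superposing over $d\in\Sp^1_{-}$ together with the reflected half-circle term $U_3$ reconstitutes the full-circle integral $-2\pi J_0(k_{+}|x-z|)$, with the regularity and radiation conditions supplied by Lemma \ref{lem3}, the decomposition \eqref{eq33} and the results of \cite{CSZ1998}. The only organizational difference is that the paper verifies (TSP) for each fixed incident direction $d$ (with plane-wave boundary data $-e^{ik_{+}x\cdot d}$) and then delegates the superposition step to \cite[Theorem 3.2]{Z_20}, whereas you superpose first and check conditions (i)--(iv) directly, which makes the argument somewhat more self-contained.
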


\begin{proof}
Let $d\in \Sp^1_{-}$, $h_1>\sup_{x_1\in\mathbb{R}}h_\Gamma(x_1)$ and
$h_2<\inf_{x_1\in\mathbb{R}}h_\Gamma(x_1)$. Define ${\widetilde{v}}_1(x,d)= u^{tot}(x,d)-u^i(x,d)$ for $x\in \Omega_{+}$ and $\widetilde{v}_2(x,d)= u^{tot}(x,d)$ for $x\in \Omega_{-}$.
It follows from Lemma \ref{lem3} that
${\wid{v}}_1(\cdot,d)\in C^2(\Omega_+)\cap BC^{1}(\overline{\Omega_+}\se U^+_{h_1})$,
${\wid{v}}_2(\cdot,d)\in C^2(\Omega_-)\cap BC^{1}(\overline{\Omega_{-}}\se U^-_{h_2})$,
and ${\widetilde{v}}_1(\cdot,d)$ and ${\widetilde{v}}_2(\cdot,d)$ satisfy (\ref{c:3})
with $\beta=0$. Furthermore, we note that ${\wid{v}}_1(x,d)= u^{s}(x,d)+u^r(x,d)$ for
$x\in  U^+_{h_1}$ and ${\widetilde{v}}_2(x,d)= u^{s}(x,d)+u^t(x,d)$ for $x\in U^-_{h_2}$.
Thus, applying (\ref{eq3}) and (\ref{eq2}) and using \cite[Theorem 2.9 and Remark 2.14]{CSZ1998}
give that ${\widetilde{v}}_1(\cdot,d)$ and ${\wid{v}}_2(\cdot,d)$ fulfill (\ref{eq36})
and \eqref{eq37}, respectively. Moreover, we can obtain from \eqref{e:1.1} and \eqref{eq5}
that $\Delta_x {\widetilde{v}}_1(x, d)+ k^2_{+}{\wid{v}}_1(x, d)=0$ in $\Omega_{+}$,
$\Delta_x {\widetilde{v}}_2(x, d)+ k^2_{-}{\wid{v}}_2(x, d)=0$ in $\Omega_{-}$, and
${\widetilde{v}}_1(x, d)$ and ${\wid{v}}_2(x, d)$ satisfy the transmission boundary condition
\begin{align*}
\widetilde v_1(x,d)-\widetilde v_2(x,d)= -e^{ik_{+}x\cdot d}, \quad \frac{\partial \widetilde v_1(x,d)}{\partial{\nu(x)}}-\frac{\partial \widetilde v_2(x,d)}{\partial{\nu(x)}}=-\frac{\partial e^{ik_{+}x\cdot d}}{\partial{\nu}(x)}\quad \textrm{on}~ \Gamma.
\end{align*}
Based on the above discussions, we obtain that the pair of functions $(\wid v_1(x,d),\wid v_2(x,d))$
is the solution of the problem (TSP) with the boundary data $g_1(x)=-e^{ik_{+}x\cdot d}$ and
$g_2(x)=-\partial e^{ik_{+}x\cdot d}/\partial{\nu}(x)$ on $\Gamma$, whence the statement follows
with the aid of \cite[Theorem 3.2]{Z_20}.
\end{proof}

\begin{remark}\label{re:1}
In \cite[Section 3.1]{Lzzd}, the properties of the solution to the problem (TSP) with the boundary
data $g_1(x)=a J_0(k_{+}|x-z|)$
and $ g_2(x)= a \partial J_0(k_{+}|x-z|)/{\partial\nu}(x)$, $x\in \Gamma$, for some constant
$a\in\mathbb{C}$ have been studied in the case when $\Gamma$ is a globally  rough surface.
With the help of the discussions in \cite[Section 3.1]{Lzzd} and Theorem \ref{thm1}, it is
expected that for any $x$ in the bounded subset of $\Omega_+$,
$U(x,z)$ will take a large value when $z\in \Gamma$ and decay as $z$ moves away from $\Gamma$.
Consequently, it is expected that for any fixed $R>0$ such that $\Gamma_p\subset B_R$,
$I_S(z,R)$ will take a large value when $z\in \Gamma$ and decay as $z$ moves away from $\Gamma$.
\end{remark}

With these preparations, we then turn to the direct imaging method for the inverse problem (IP1).
Based on the discussions at the beginning of this subsection and Remark \ref{re:1},
it is expected that if $R$ is sufficiently large, then
for $z\in K$
the imaging function
$I_{P}(z,R)$ will take a large value when $z\in \Gamma\cap K$ and decay as $z$ moves away from
$\Gamma\cap K$. This property is indeed confirmed in the numerical examples carried out later.
In the numerical experiments, we measure the  phaseless total-field data $|u^{tot}(x^{(p)},d^{(q)})|$
with $p = 1,2, \ldots, M_P$ and $q = 1, 2, \ldots, N_P$, where $x^{(p)}$ and $d^{(q)}$ are uniformly
distributed points on $\partial {B_{R}^{+}}$ and $\Sp_{-}^1$, respectively.
Accordingly, the imaging function $I_{P}(z,R)$ can be approximated as
\begin{align}
I_P(z,R)\approx
\label{a:1}
&\frac{R\pi^3}{M_P N^2_P}\sum^{M_P}_{p=1}
\Bigg|\sum^{N_P}_{q=1}\bigg\{\left[\left|u^{tot}(x^{(p)},d^{(q)})\right|^2-A^{(1)}_P(x^{(p)},d^{(q)})\right]
e^{ik_+(x^{(p)}-z)\cdot d^{(q)}}\no\\
&-
A^{(2)}_P(x^{(p)},d^{(q)},z)\bigg\}\Bigg|^2,
\end{align}
where $A^{(1)}_P(x,d):=1+|\mathcal{R}_0(\theta_d)|^2+\ov{\mathcal{R}_0(\theta_d)}e^{2ik_+ x_2 d_2}$
for $x\in\pa B^+_R$ and $d\in\mathbb{S}^1_-$, and
$A^{(2)}_P(x,d,z):=\exp(ik_+(x'-z')\cdot d)$ for $x\in\pa B^+_R$, $d\in\mathbb{S}^1_-$
and $z\in\mathbb{R}^2$.
Then for the inverse problem (IP1), we expect that the locally rough surface $\Gamma$
can be reconstructed by using the formula \eqref{a:1}.
Now the direct imaging method for the inverse problem (IP1) is described in Algorithm \ref{A1}.

\begin{algorithm}
\caption{\textbf{Direct imaging method for the inverse problem (IP1)}\label{A1}}
Let $K$ be the sampling region which contains the local perturbation $\Gamma_p$
of the penetrable locally rough surface $\Gamma$.

\begin{algorithmic}[1]
\STATE{Choose $\mathcal{T}_m$ to be a mesh of $K$
and let $R$ be sufficiently large.}

\STATE{
Collect the phaseless total-field data
$|u^{tot}(x^{(p)}, d^{(q)})|$,  $p = 1,2, \ldots, M_P$, $q = 1, 2, \ldots, N_P$, with
$x^{(p)}\in \partial {B_{R}^{+}}$ and $d^{(q)}\in \Sp^1_{-}$,
generated by the incident plane waves $u^i(x, d^{(q)})=e^{ik_+ x\cdot d^{(q)}}$,
$q = 1, 2,\ldots,N_P$.}

\STATE{
For each sampling point $z\in \mathcal{T}_m$,
approximately compute the imaging function $I_{P}(z,R)$ by using (\ref{a:1}).}

\STATE{
Locate all those sampling points $z\in\mathcal{T}_m$ such that $I_{P}(z,R)$ takes a large value, which represent the part of the locally rough surface $\Gamma$ in the sampling region $K$.}
\end{algorithmic}
\end{algorithm}

Next, we consider the direct imaging method for the inverse problem (IP2).
By using the discussions at the beginning of this subsection and Remark \ref{re:1} again,
it is expected that for $z\in K$, the imaging function $I_F(z)$ will take a large value when $z\in \Gamma\cap K$ and decay as $z$ moves away from $\Gamma\cap K$. This property is also confirmed in the numerical examples carried out later.
In numerical experiments, we measure the phased far-field data $u^{\infty}(\hat{x}^{(p)}, d^{(q)})$ with  $p = 1,2, \ldots, M_F$ and $q = 1, 2, \ldots, N_F$, where ${\hat x}^{(p)}$ and $d^{(q)}$ are uniformly distributed points on ${\Sp^{1}_{+}}$ and $\Sp^1_{-}$, respectively.
Accordingly, $I_{F}(z)$ can be approximated as
\begin{align}\label{a:2}
I_{F}(z)\approx \frac{\pi}{M_F}\sum^{M_F}_{p=1}\left|\frac{\pi}{N_F}\left(\sum^{N_F}_{q=1}u^{\infty}({\hat x}^{(p)}, d^{(q)})e^{-ik_{+}z\cdot d^{(q)}}\right)+A_F(\hat{x}^{(p)},z)\right|^2,
\end{align}
where $A_F(\hat{x},z):={\left({2\pi}/{k_{+}}\right)}^{1/2}e^{{-i\pi}/{4}}[{\RR}(\theta_{\hat x})e^{-ik_{+}\hat{x} \cdot z{'}}
-e^{-ik_{+}{\hat x}\cdot z}]$ for $\hat{x}\in\mathbb{S}^1_+$ and $z\in\mathbb{R}^2$.
Then similarly to Algorithm \ref{A1}, we describe the direct imaging method for the inverse problem (IP2) in Algorithm \ref{A2}.

\begin{algorithm}
\caption{\textbf{Direct imaging method for the inverse problem (IP2)}\label{A2}}
Let $K$ be the sampling region which contains the local perturbation $\Gamma_p$
of the penetrable locally rough surface $\Gamma$.

\begin{algorithmic}[1]
\STATE{
Choose $\mathcal{T}_m$ to be a mesh of $K$.}

\STATE{
Collect the phased far-field data
$u^{\infty}(\hat{x}^{(p)}, d^{(q)})$,  $p = 1,2, \ldots, M_F$, $q = 1, 2, \ldots, N_F$, with
$\hat{x}^{(p)}\in \mathbb{S}^1_+$ and $d^{(q)}\in \Sp^1_{-}$,
generated by the incident plane waves $u^i(x, d^{(q)})=e^{ik_+ x\cdot d^{(q)}}$,
$q = 1, 2,\ldots,N_F$.}

\STATE{
For each sampling point $z\in \mathcal{T}_m$,
approximately compute the imaging function $I_{F}(z)$ by using (\ref{a:2}).}

\STATE{
Locate all those sampling points $z\in\mathcal{T}_m$ such that $I_{F}(z)$ takes a large value,
which represent the part of the locally rough surface $\Gamma$ in the sampling region $K$.}
\end{algorithmic}
\end{algorithm}

\section{Numerical experiments} \label{num}

In this section, we will present several numerical examples to illustrate the applicability of
our direct imaging methods for the inverse problems (IP1) and (IP2). To generate the synthetic data,
the direct scattering problem (\ref{e:1.1})--(\ref{eq2}) is solved by the perfectly matched
layer-based boundary integral equation method proposed in \cite{LP_17}. In all the examples,
we will present the imaging results of $I_P(z,R)$ with phaseless total-field data (i.e. the
results of Algorithm \ref{A1}) and the imaging results of $I_F(z)$
with phased far-field data (i.e. the results of Algorithm \ref{A2}). Further, the noisy phaseless
total-field data $|u^{tot}_{\delta}(x^{(p_1)}, d)|$ with $x^{(p_1)}\in\partial{B_R^+},d\in\Sp^{1}_{-}$
($p_1 = 1,2, \ldots, M_P$) and the noisy phased far-field data $u^{\infty}_{\delta}(\hat x^{(p_2)},d)$
with ${\hat x}^{(p_2)}\in  {\Sp^{1}_{+}}, d\in \Sp^{1}_{-}$ ($p_2 = 1,2, \ldots, M_F$) are given by
\begin{align*} 
&|u^{tot}_{\delta}(x^{(p_1)},d )|=|u^{tot}(x^{(p_1)},d)|
+\delta\frac{\xi^{(p_1)}}{\sqrt{\sum^{M_P}_{l=1}|\xi^{(l)}|^2}}\sqrt{\sum^{M_P}_{l=1}|u^{tot}(x^{(l)},d)|^2}, \\
&u^{\infty}_{\delta}({\hat x}^{(p_2)}, d)=u^{\infty}({\hat x}^{(p_2)},d)
+\delta\frac{\zeta^{(p_2)}+i\eta^{(p_2)}}{\sqrt{\sum^{M_F}_{l=1}|\zeta^{(l)}+i\eta^{(l)}|^2}}
\sqrt{\sum^{M_F}_{l=1}|u^{\infty}({\hat x}^{(l)}, d)|^2},
\end{align*}
where $\delta$ is the noise ratio and where $\xi^{(p_1)}$ ($p_1=1,2,\ldots, M_P$) and
$\zeta^{(p_2)}$, $\eta^{(p_2)}$ ($p_2=1,2,\ldots, M_F$) are the standard normal distributions.

In each figure presented below, we use a solid line to represent the actual locally rough surface
against the reconstructed locally rough surface.

\textbf{Example 1.} We consider the case when the locally rough surface is given by
\ben
h_{\Gamma}(x_1)= \begin{cases}
0.4\sin\left[(x_1^2-{16}/{25})^2\right]\sin^3{({2\pi x_1}/{3} )}, &|x_1|\le  4/5, \\
0,  &|x_1|>  4/5.
\end{cases}
\enn
We choose the wave numbers $k_+ =40$ and $k_-=80$ and set the noise ratio $\delta  = 10\%$.
First, we consider the inverse problem (IP1)
and investigate the effect of the radius $R$ of the measurement circle $\partial B^+_R$ on the imaging results.
The numbers of the measurement points and the incident directions are chosen to be $M_P = N_P = 300$. Figures \ref{fig1}(a), \ref{fig1}(b) and \ref{fig1}(c) present the imaging results of $I_{P}(z,R)$ with
the measured phaseless total-field data with the radius of the measurement circle $\partial B^+_R$
to be $R = 1.5$, $2$, $3$, respectively. It is shown in Figures \ref{fig1}(a)--\ref{fig1}(c)
that the reconstruction result is getting better if the radius of the measurement circle is getting larger.
Secondly, we consider the inverse problem (IP2). The numbers of the measured observation directions
and the incident directions are chosen to be $M_F = N_F = 100$. Figure \ref{fig1}(d) presents
the imaging result of $I_{F}(z)$ with the measured phased far-field data. As shown in
Figure \ref{fig1}, the reconstruction result of $I_{F}(z)$ with the measured phased far-field
data is better than those of $I_{P}(z,R)$ with the measured phaseless total-field data.
\begin{figure}[htbp]
\centering
\hspace{0.15\textwidth}
\subfigure[$10\%$ noise, $k_{+} = 40$, $k_{-} = 80$, $R = 1.5$]
{
\begin{minipage}[t]{0.3\textwidth}
\centering
\includegraphics[width=\textwidth]{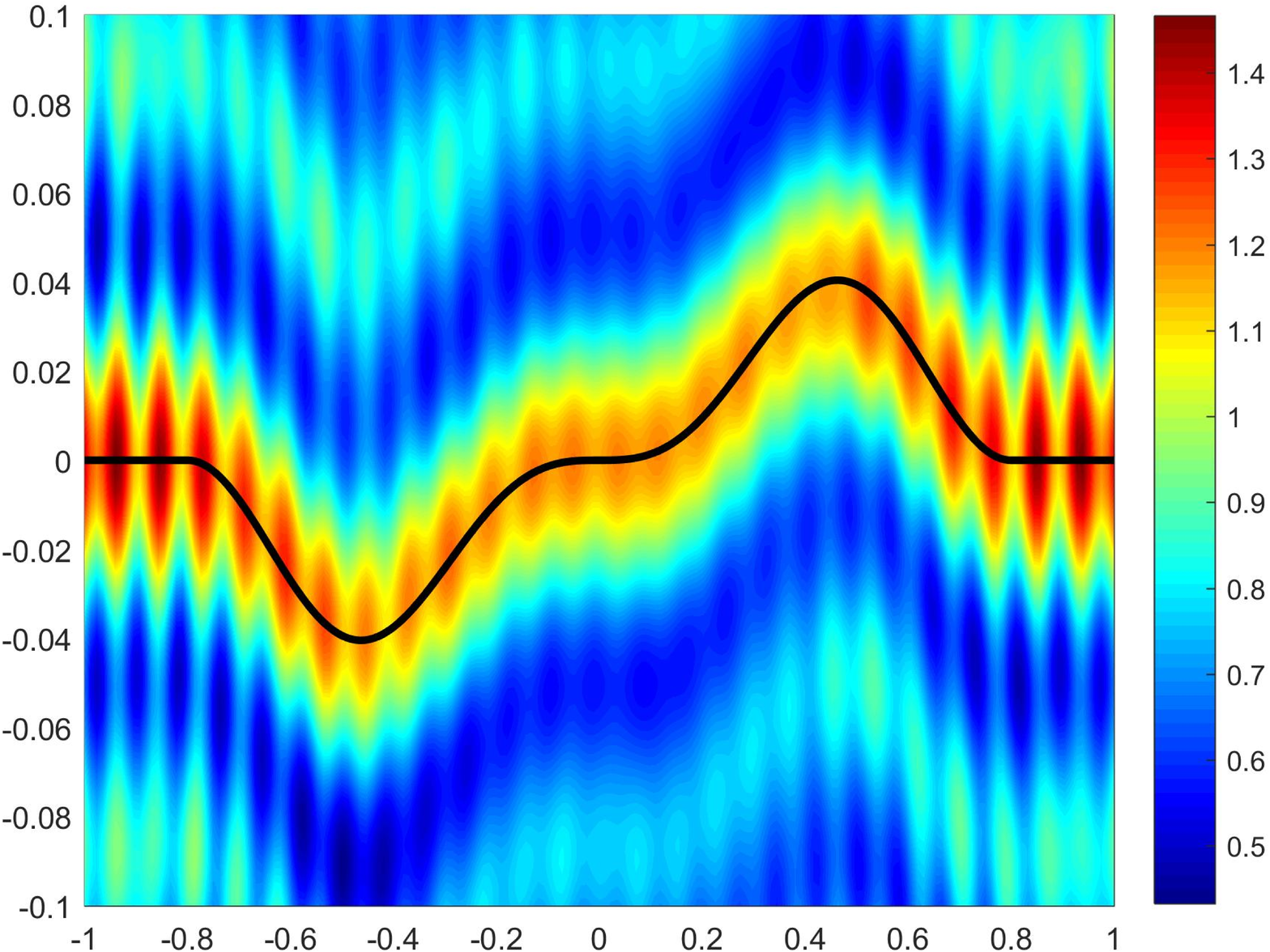}
\end{minipage}%
}%
\hfill
\subfigure[$10\%$ noise, $k_{+} = 40$, $k_{-} = 80$, $R = 2$]
{
\begin{minipage}[t]{0.3\textwidth}
\centering
\includegraphics[width=\textwidth]{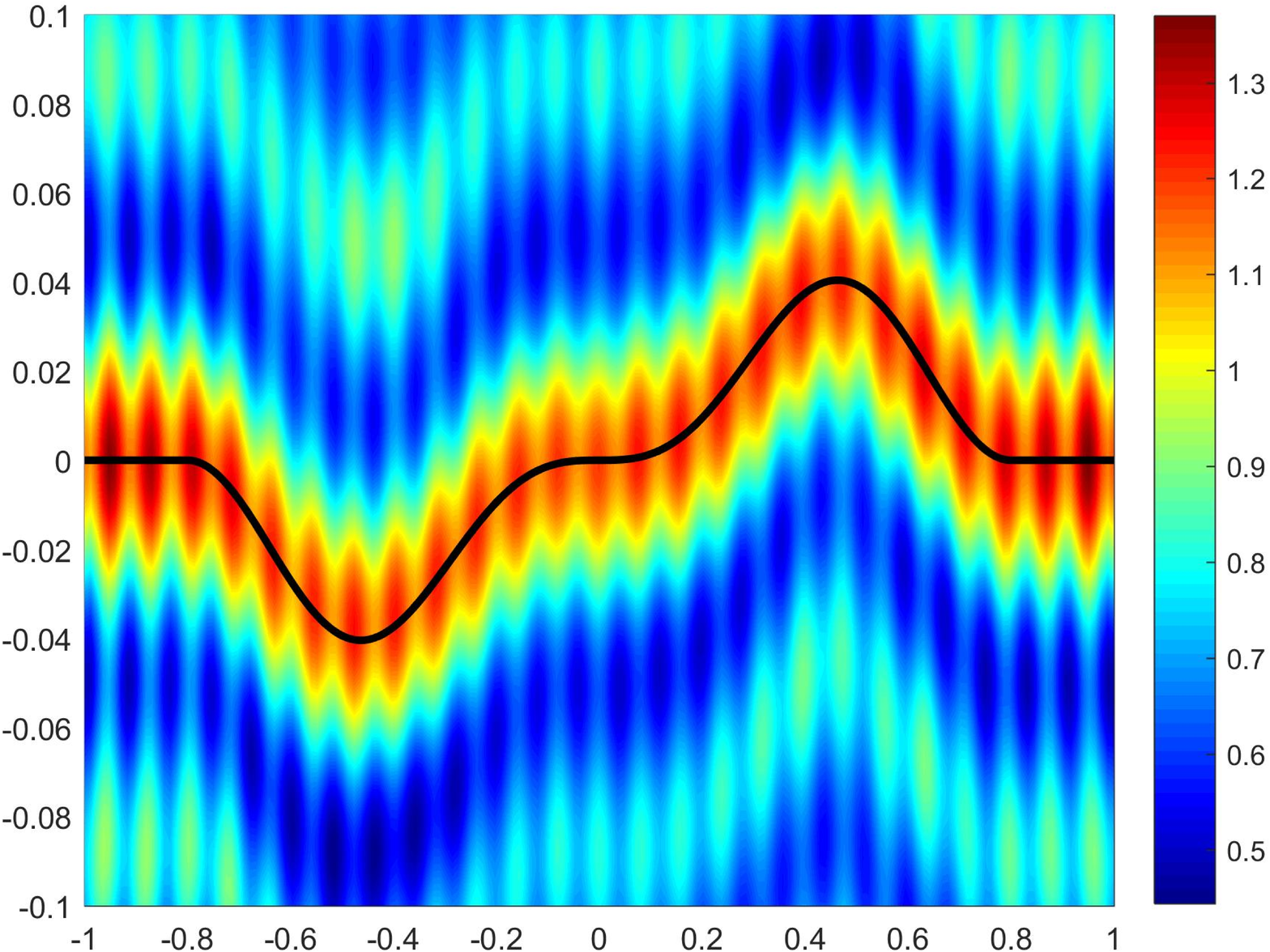}
\end{minipage}%
}
\hspace{0.15\textwidth}

\hspace{0.15\textwidth}
\subfigure[$10\%$ noise, $k_{+} = 40$, $k_{-} = 80$, $R = 3$]
{
\begin{minipage}[t]{0.3\textwidth}
\centering
\includegraphics[width=\textwidth]{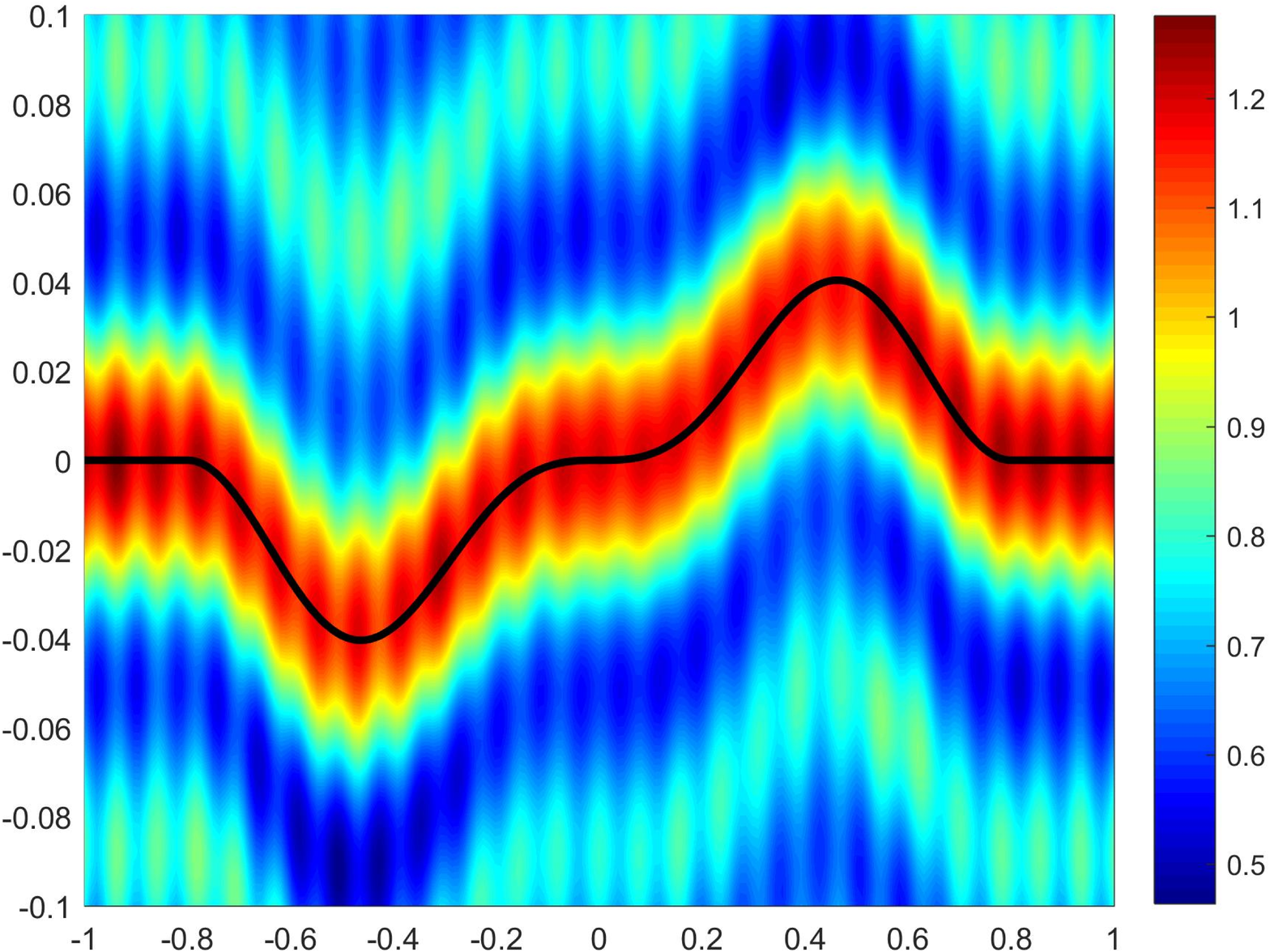}

\end{minipage}%
}
\hfill
\subfigure[$10\%$ noise, $k_{+} = 40$, $k_{-} = 80$]
{
\begin{minipage}[t]{0.3\textwidth}
\centering
\includegraphics[width=\textwidth]{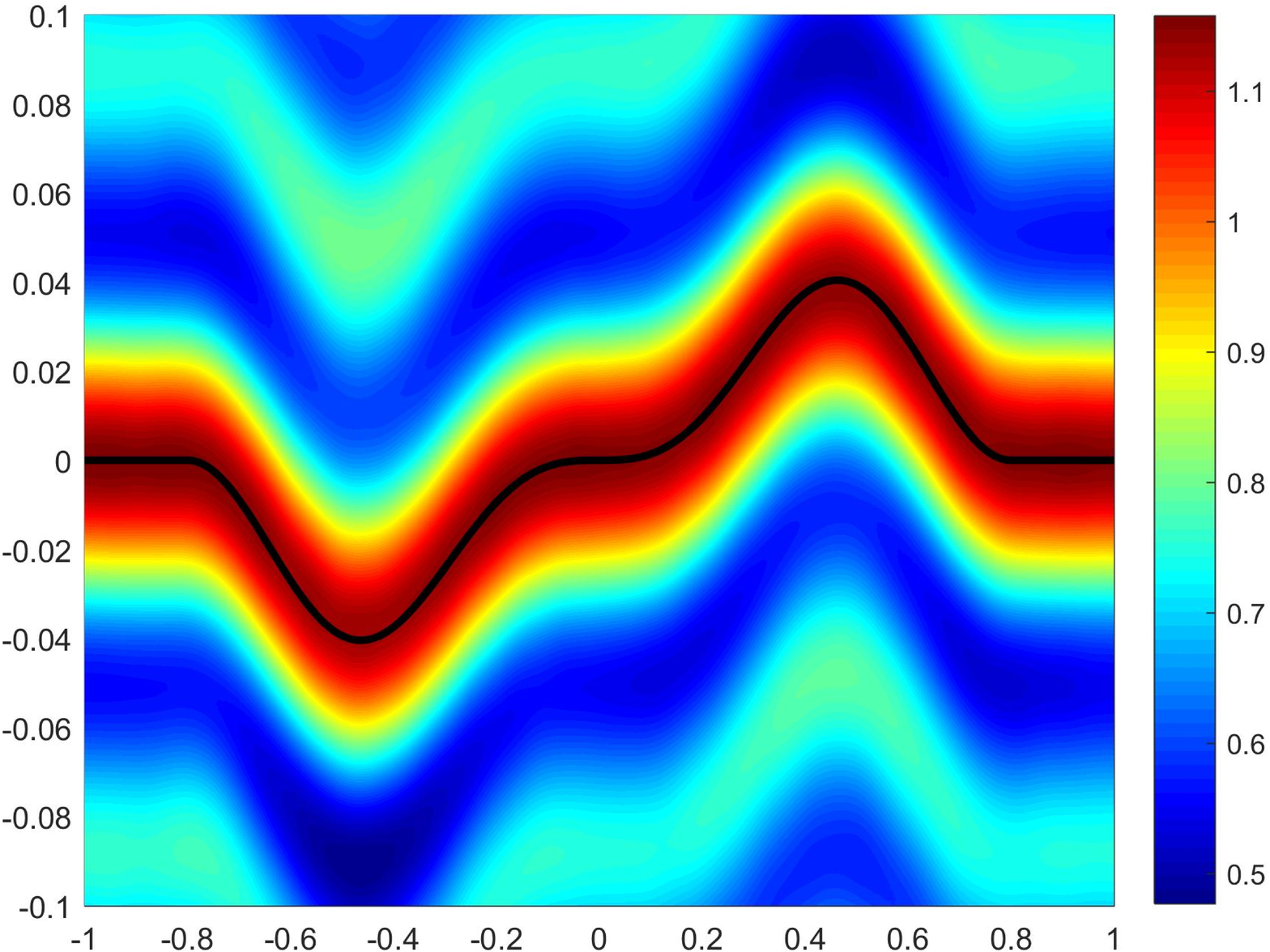}

\end{minipage}%
}
\hspace{0.15\textwidth}
\centering
\caption{(a), (b) and (c) show the imaging results of $I_{P}(z,R)$ with the measured phaseless
total-field data for different values of the radius $R$. (d) shows the imaging result of $I_{F}(z)$
with the measured phased far-field data. The solid line represents the actual curve.}\label{fig1}
\end{figure}

\textbf{Example 2.} We now investigate the effect of the noise ratio $\delta$ on the imaging results.
The locally rough surface considered is given by
\ben
h_{\Gamma}(x_1) = \begin{cases}
0.2\sin\left[(x_1^2-{16}/{25})^3\right]\sin(3\pi x_1)e^{-x_1^2}, &|x_1|\le 4/5,\\
0, & |x_1| > 4/5.
\end{cases}
\enn
We choose the wave numbers $k_+ =40$ and $k_-=80$. First, we consider the inverse problem (IP1).
The radius of the measurement circle $\partial B^+_R$ is set to be $R=3$. The numbers of the
measurement points and the incident directions are chosen to be $M_P = N_P = 300$.
Figure \ref{fig2} presents the imaging results of $I_{P}(z,R)$ from the measured phaseless
total-field data without noise, with $20\%$ noise and with $40\%$ noise, respectively. Next,
we consider the inverse problem (IP2). We choose the numbers of the measured observation
directions and the incident directions to be $M_F=N_F =100$. Figure \ref{fig3} presents
the imaging results of $I_{F}(z)$ from the measured phased far-field data without noise,
with $20\%$ noise and with $40\%$ noise, respectively.

\begin{figure}[htbp]
\centering
\subfigure[No noise, $k_{+} = 40$, $k_{-} = 80$, $R = 3$]
{
\begin{minipage}[t]{0.3\textwidth}
\centering
\includegraphics[width=\textwidth]{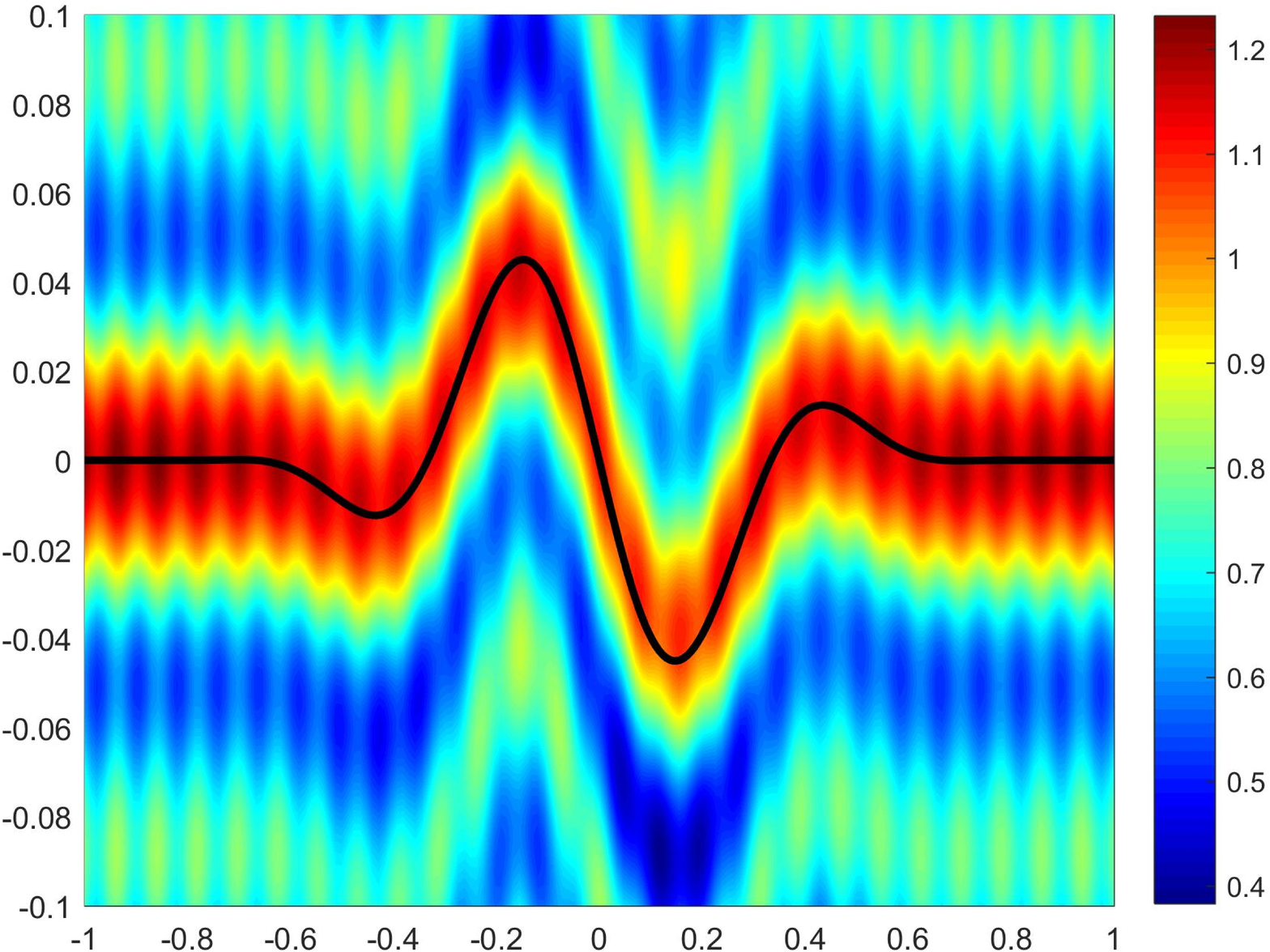}
\end{minipage}%
}%
\hfill
\subfigure[$20\%$ noise, $k_{+} = 40$, $k_{-} = 80$, $R = 3$]
{
\begin{minipage}[t]{0.3\textwidth}
\centering
\includegraphics[width=\textwidth]{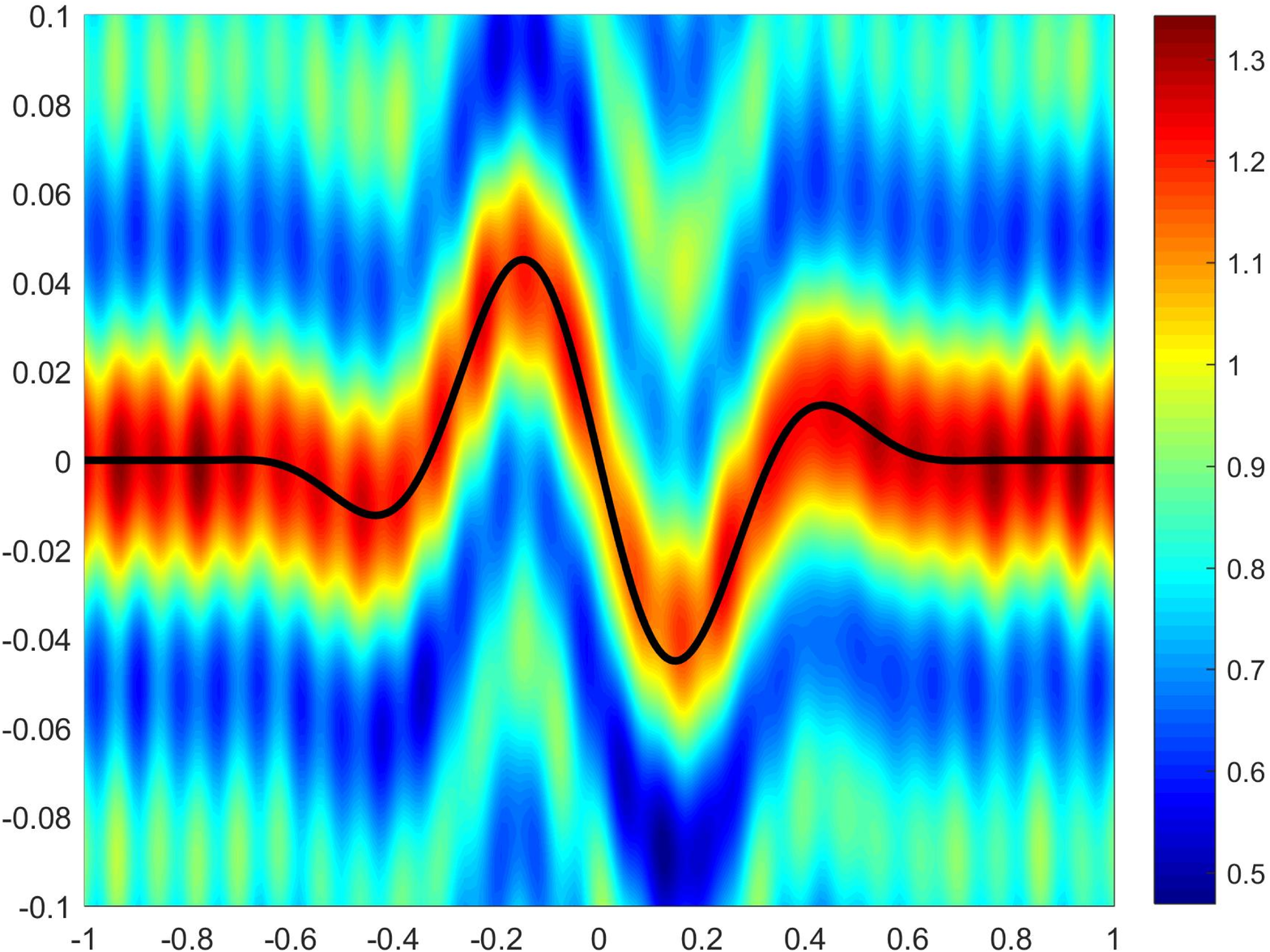}
\end{minipage}%
}
\hfill
\subfigure[$40\%$ noise, $k_{+} = 40$, $k_{-} = 80$, $R = 3$]
{
\begin{minipage}[t]{0.3\textwidth}
\centering
\includegraphics[width=\textwidth]{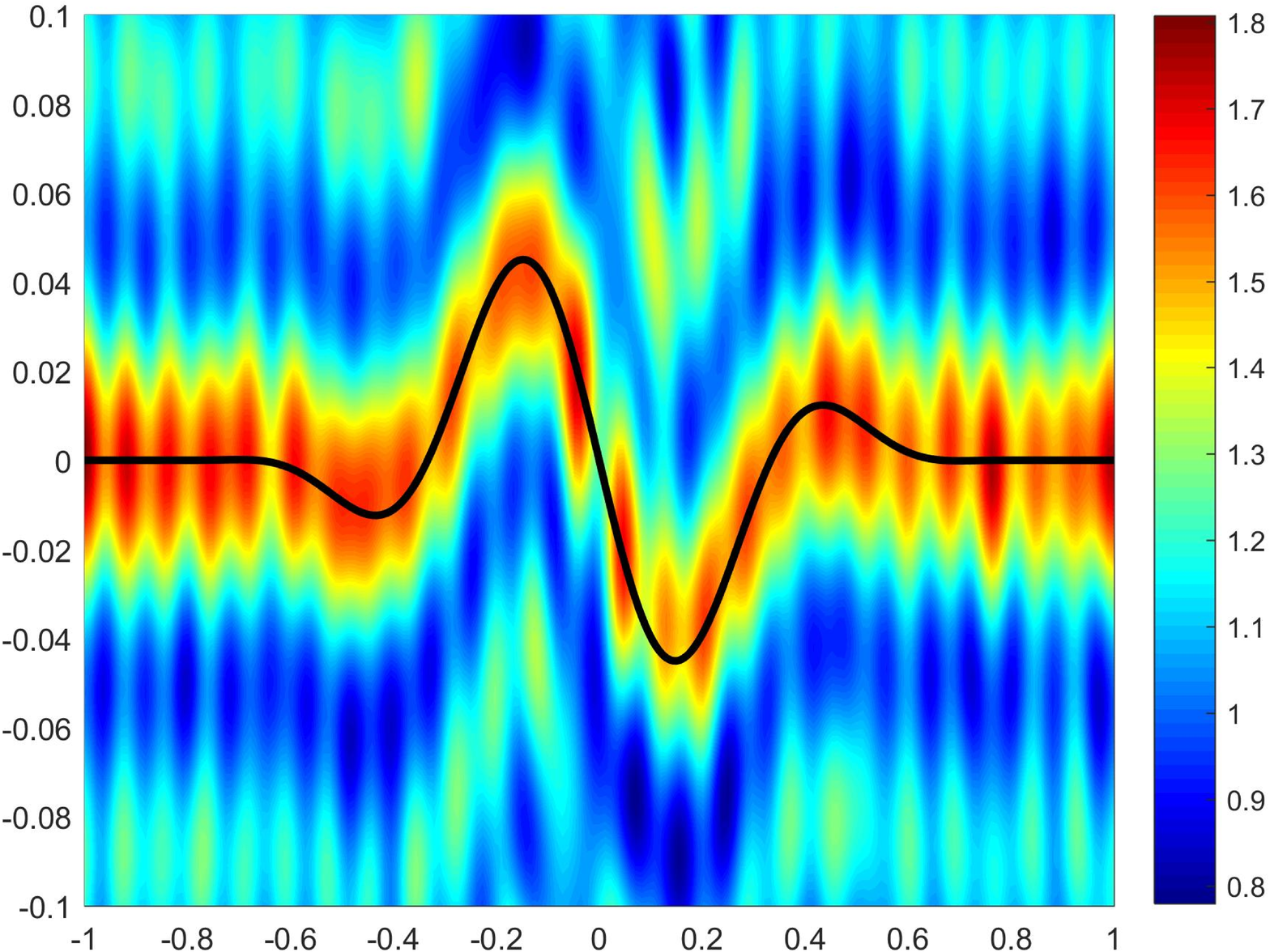}

\end{minipage}%
}%

\centering
\caption{Imaging results of $I_P(z,R)$ with the measured phaseless total-field data for
different noise ratios. The solid line represents the actual curve.}\label{fig2}
\end{figure}

\begin{figure}[htbp]
\centering
\subfigure[No noise, $k_{+} = 40$, $k_{-} = 80$]
{
\begin{minipage}[t]{0.3\textwidth}
\centering
\includegraphics[width=\textwidth]{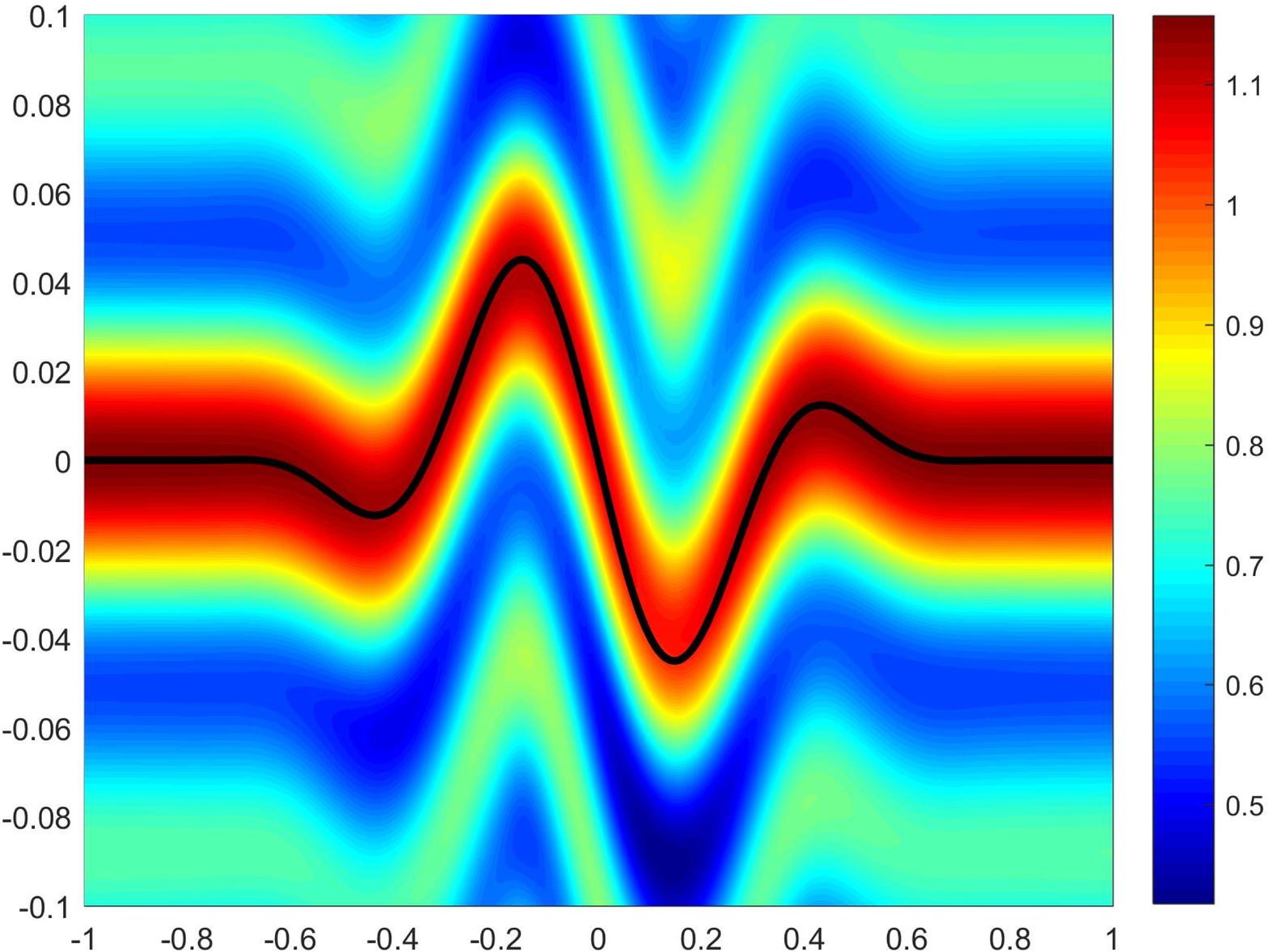}
\end{minipage}%
}%
\hfill
\subfigure[$20\%$ noise, $k_{+} = 40$, $k_{-} = 80$]
{
\begin{minipage}[t]{0.3\textwidth}
\centering
\includegraphics[width=\textwidth]{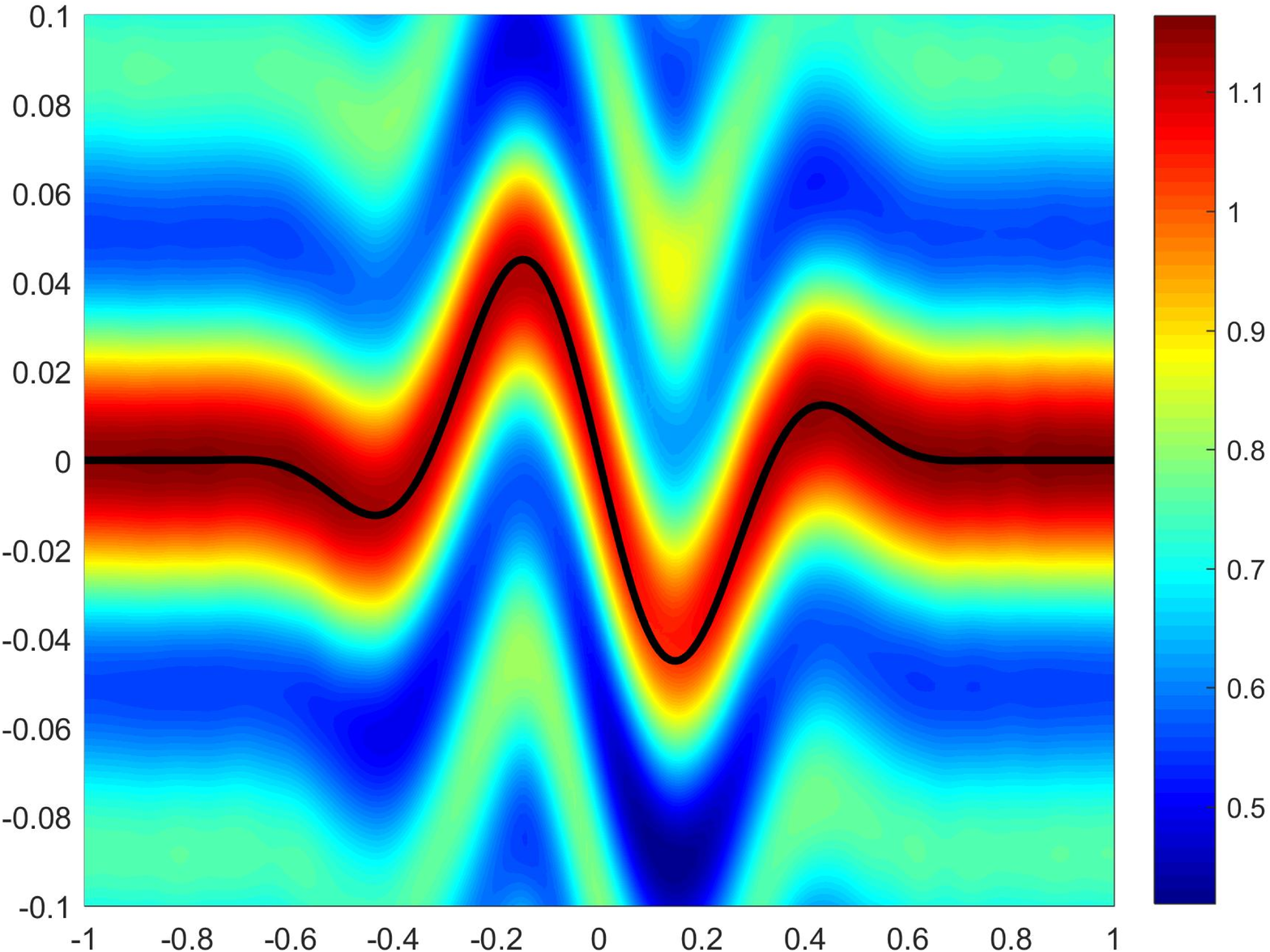}
\end{minipage}%
}
\hfill
\subfigure[$40\%$ noise, $k_{+} = 40$, $k_{-} = 80$]
{
\begin{minipage}[t]{0.3\textwidth}
\centering
\includegraphics[width=\textwidth]{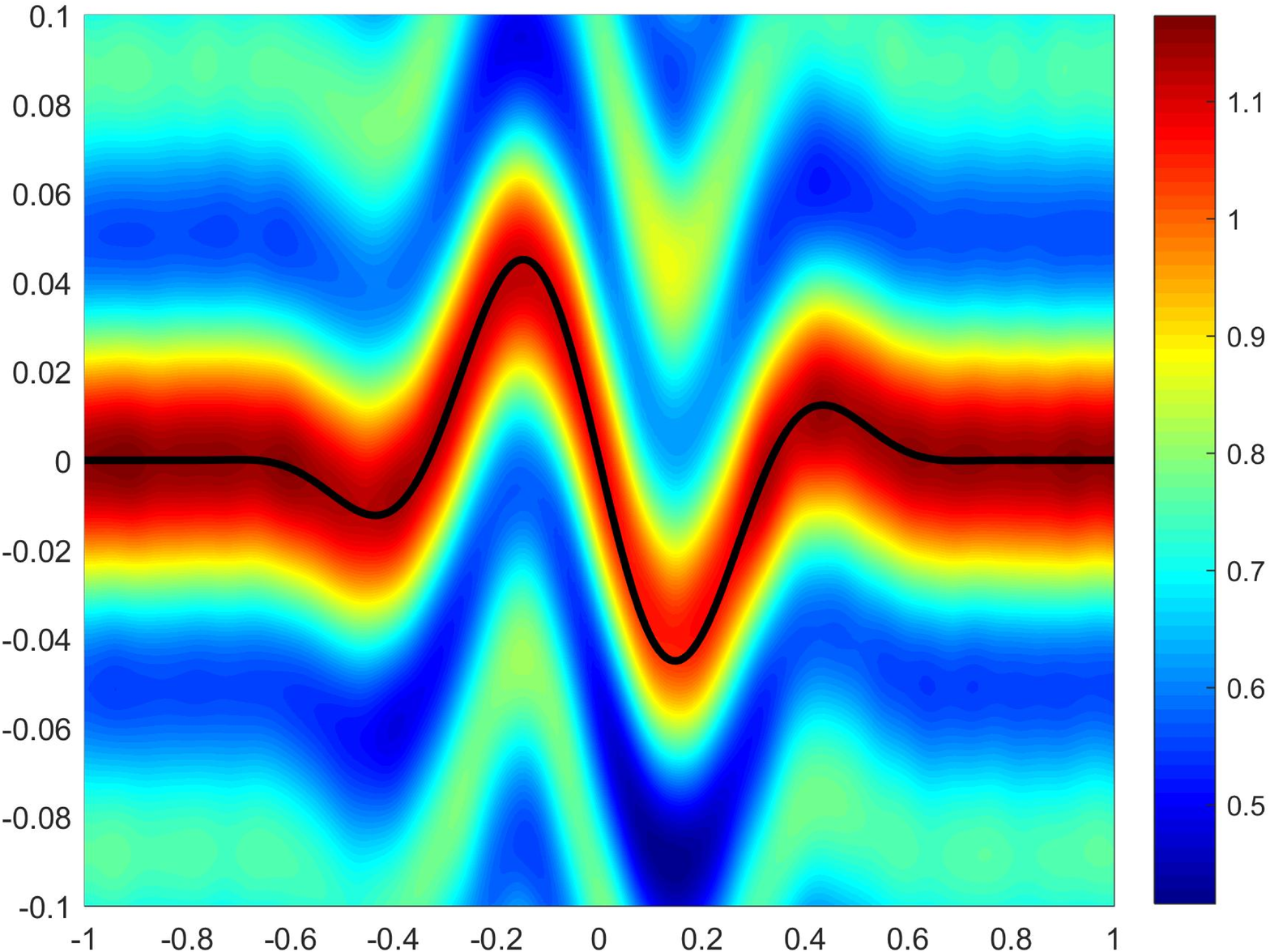}

\end{minipage}%
}%
\centering
\caption{Imaging results of $I_F(z)$ with the measured phased far-field data for different noise ratios.
The solid line represents the actual curve. }\label{fig3}
\end{figure}

\textbf{Example 3.} In this example, we compare the imaging results in the case $k_+>k_-$ with those in the case $k_+<k_-$.
The locally rough surface is given by
\ben
h_\Gamma(x_1) = \begin{cases}
0.2\exp{\left[{16}/({25x_1^2 - 16})\right]}[0.5+0.1\sin(16\pi x_1)], & |x_1| \le 4/5,\\
0, &|x_1| > 4/5.
\end{cases}
\enn
Here, $h_\Gamma(x_1)$ consists of a macroscale represented by
$0.1\exp{\left[{16}/({25x_1^2 - 16})\right]}$
and a microscale represented by
$0.02\exp{\left[{16}/({25x_1^2 - 16})\right]}\sin(16\pi x_1)$.
We choose the noise ratio to be $\delta=10\%$. First, we consider the inverse problem (IP1). The radius of the measurement circle $\partial B^+_R$ is chosen to be $R=3$. The numbers of the measurement points and the incident directions are set to be $M_P = N_P = 400$. Figures \ref{fig4}(a) and \ref{fig4}(b) present the imaging results of $I_{P}(z,R)$ with the measured phaseless total-field data with the pair of wave numbers
$(k_+,k_-)=(90,180)$, $(90,45)$, respectively. Next, we consider the inverse problem (IP2). We choose the numbers of
the measured observation directions
and the incident directions to be $M_F=N_F =100$. Figures \ref{fig4}(c) and \ref{fig4}(d) present the imaging results of $I_{F}(z)$ with the measured phased far-field data
with the pair of wave numbers
$(k_+,k_-)=(90,180)$, $(90,45)$, respectively.

\begin{figure}[htbp]
\centering
\hspace{0.15\textwidth}
\subfigure[$10\%$ noise, $k_{+} = 90$, $k_{-} = 180$, $R=3$]
{
\begin{minipage}[t]{0.3\textwidth}
\centering
\includegraphics[width=\textwidth]{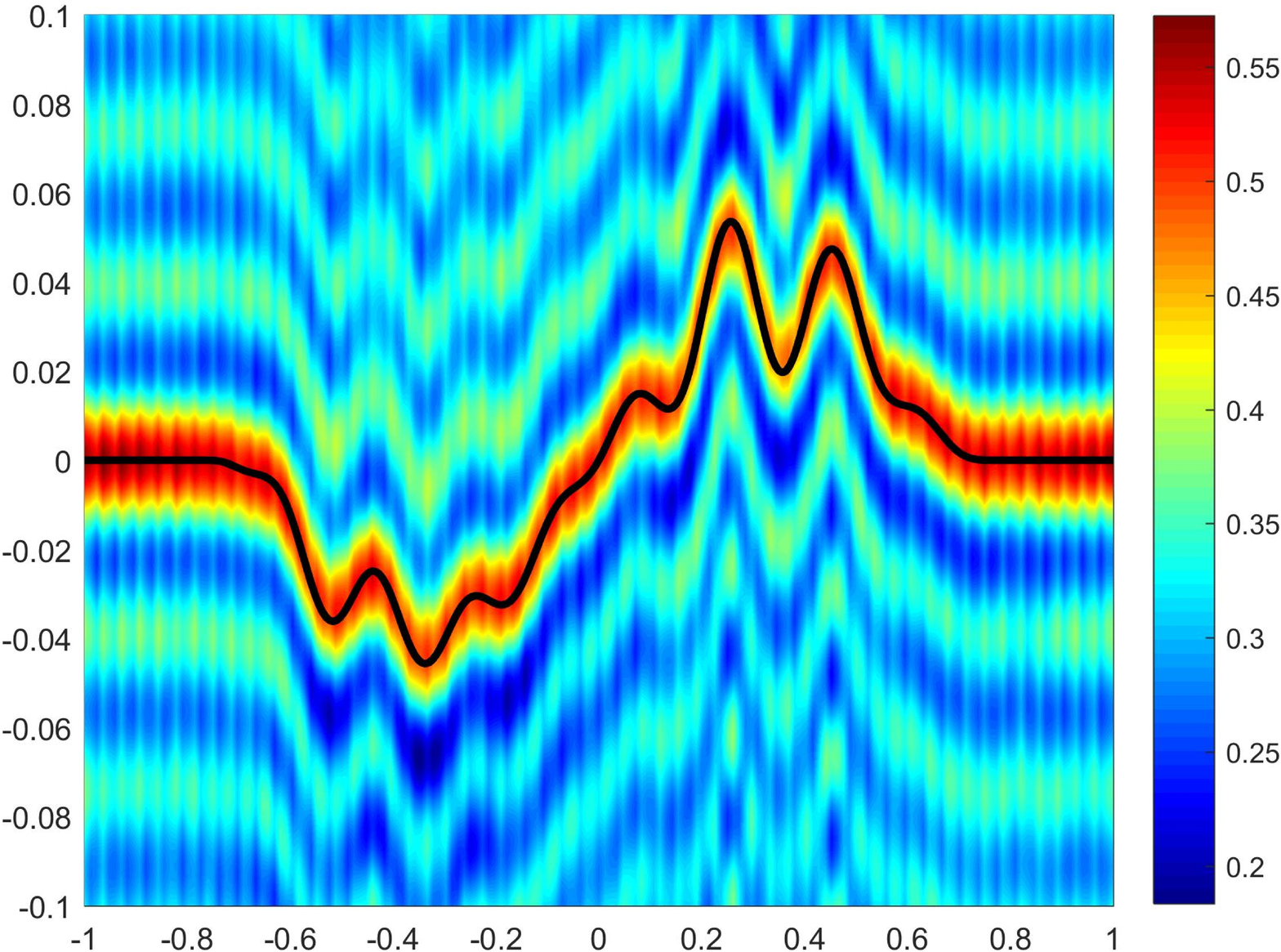}
\end{minipage}%
}%
\hfill
\subfigure[$10\%$ noise, $k_{+} = 90$, $k_{-} = 45$, $R=3$]
{
\begin{minipage}[t]{0.3\textwidth}
\centering
\includegraphics[width=\textwidth]{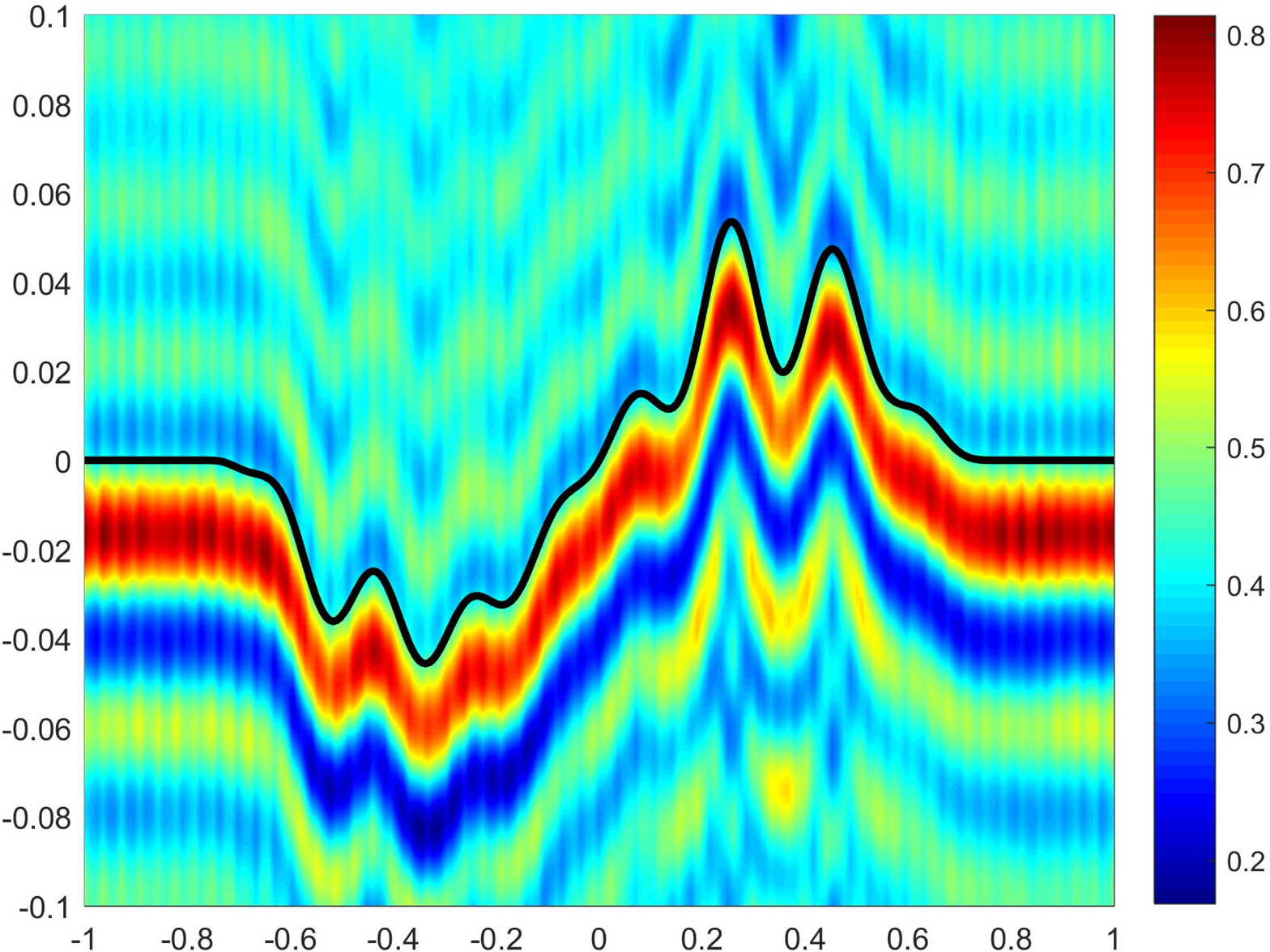}
\end{minipage}%
}
\hspace{0.15\textwidth}

\hspace{0.15\textwidth}
\subfigure[$10\%$ noise, $k_{+} = 90$, $k_{-} = 180$]
{
\begin{minipage}[t]{0.3\textwidth}
\centering
\includegraphics[width=\textwidth]{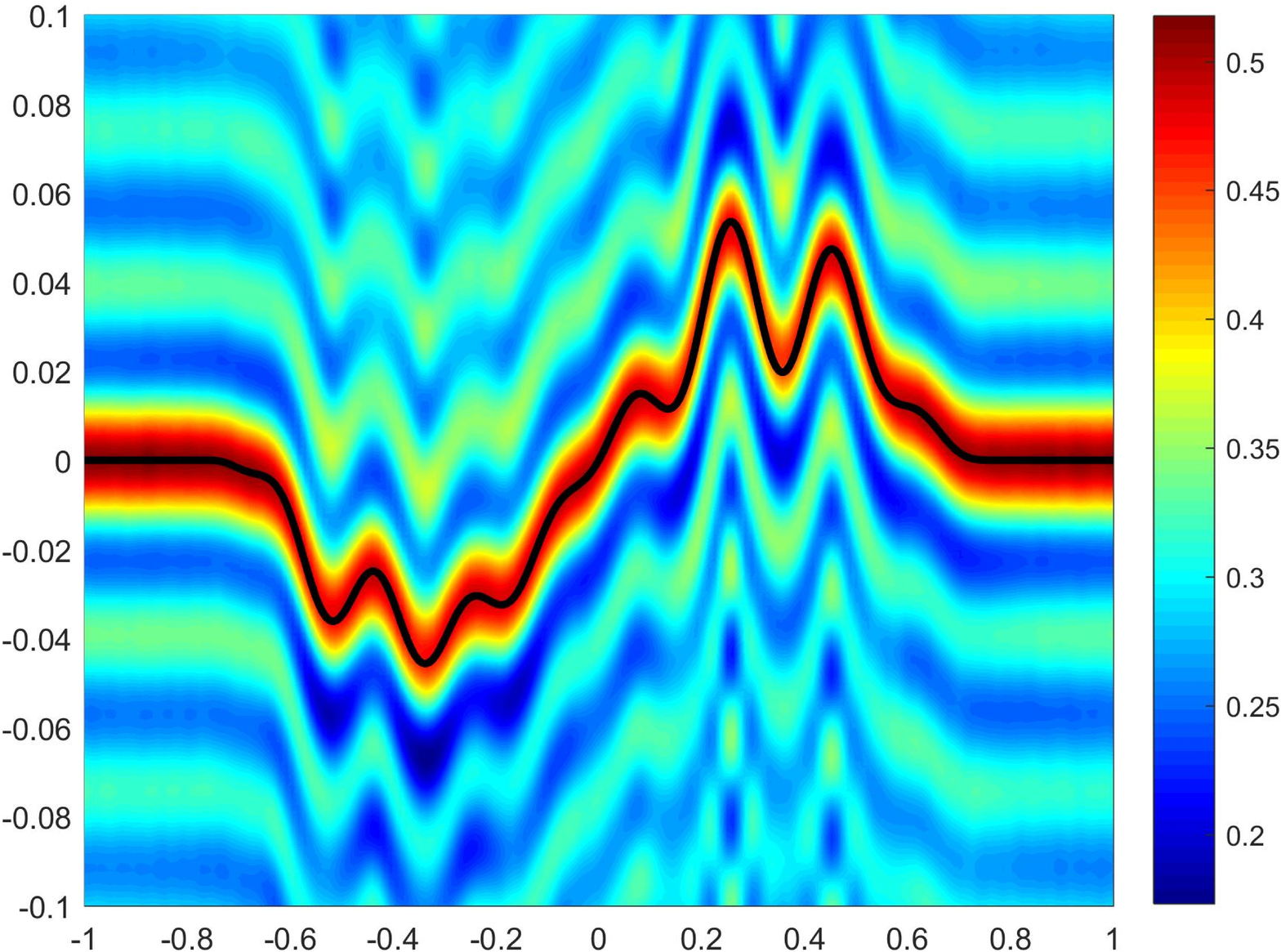}
\end{minipage}%
}%
\hfill
\subfigure[$10\%$ noise, $k_{+} = 90$, $k_{-} = 45$]
{
\begin{minipage}[t]{0.3\textwidth}
\centering
\includegraphics[width=\textwidth]{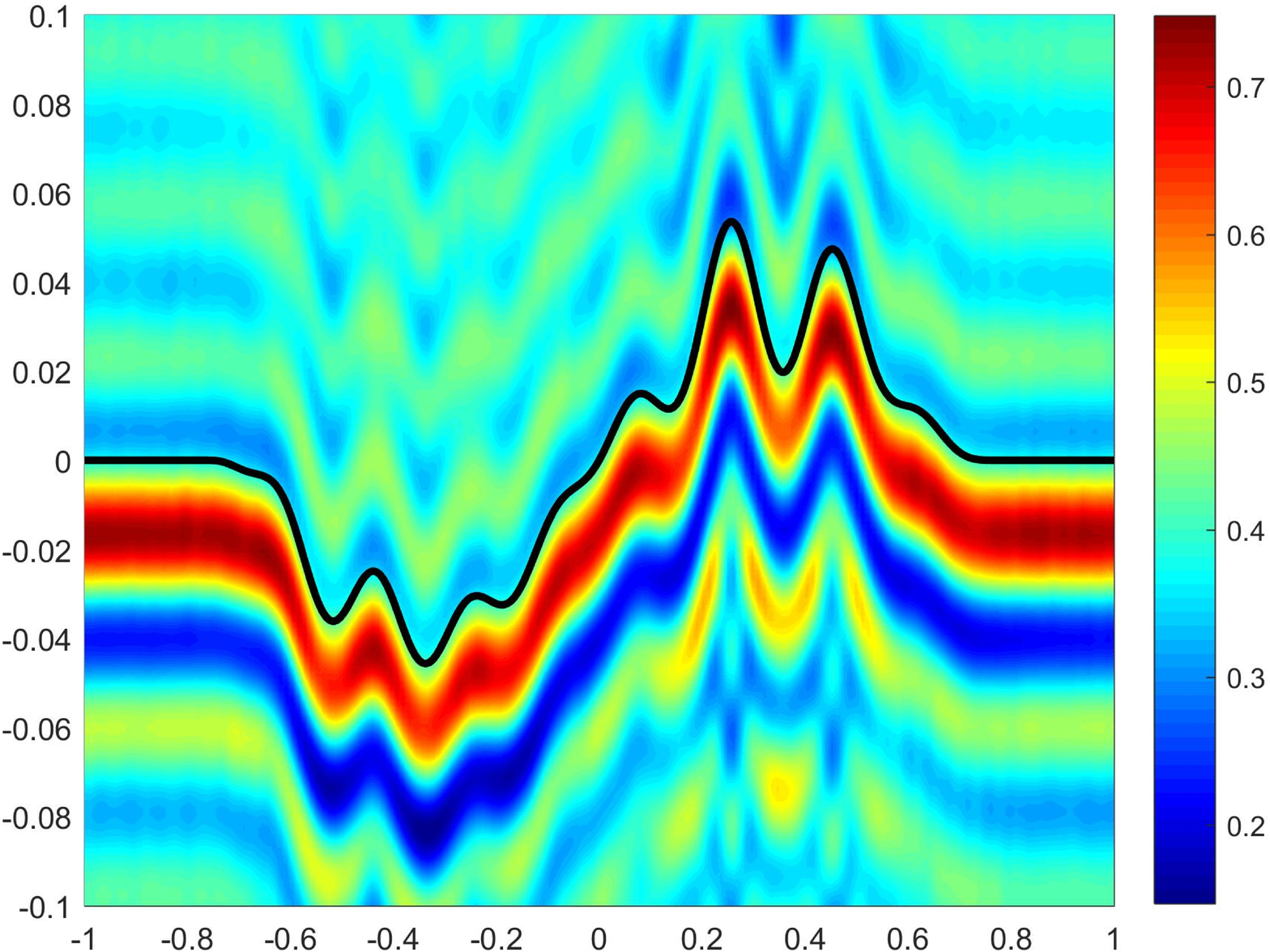}
\end{minipage}%
}%
\hspace{0.15\textwidth}
\centering
\caption{(a) and (b) show the imaging results of $I_{P}(z,R)$ with the measured phaseless total-field data. (c) and (d) show the imaging results of $I_{F}(z)$ with the measured phased far-field data. The solid line represents the actual curve.}\label{fig4}
\end{figure}

\textbf{Example 4.} In this example, we set the ratio $k_-/k_+$ to be a fixed number and
investigate the effect of the wave numbers $k_+,k_-$ on the imaging results.
The locally rough surface is chosen to be a multiscale curve given by
\begin{align*}
&h_\Gamma(x_1) = \\
&\left\{\begin{aligned}
&0.2\exp{\left[{16}/({25x_1^2 - 16})\right]}\left[0.5+0.1\sin(10\pi x_1) + 0.1\cos(8\pi x_1)\right]\sin(\pi x_1), &
|x_1| \le  \frac45,\\
&0,&
|x_1|>  \frac45.
\end{aligned}\right.
\end{align*}
Here, the function $h_\Gamma(x_1)$ has two scales: the macro scale is represented by the function
$0.1\exp{\left[{16}/({25x_1^2 - 16})\right]}\sin(\pi x_1)$,
and the micro scale is represented by the function
$0.2\exp{\left[{16}/({25x_1^2 - 16})\right]}\left[0.1\sin(10\pi x_1) + 0.1\cos(8\pi x_1)\right]\sin(\pi x_1)$.
The noise ratio is set to be $\delta = 10\%$.
First, we consider the inverse problem (IP1). The radius of the measurement circle $\partial B^+_R$ is set to be $R=3$. The numbers of the measurement points and the incident directions are chosen to be $M_P = N_P = 400$. Figure \ref{fig5} presents the imaging results of $I_{P}(z,R)$ with the measured phaseless total-field data with the pair of wave numbers $(k_+,k_-)=(60,30)$, $(90,45)$, $(120,60)$, respectively. Second, we consider the inverse problem (IP2). We choose the numbers of the measured observation directions and the incident directions to be $M_F=N_F =100$. Figure \ref{fig6} presents the imaging results of $I_{F}(z)$ with the measured phased far-field data with
the pair of wave numbers $(k_+,k_-)=(60,30)$, $(90,45)$, $(120,60)$,
respectively. From Figures \ref{fig5} and \ref{fig6}, it can be seen that the reconstruction result is getting
better with the wave numbers $k_+$ and $k_-$ getting larger.

\begin{figure}[htbp]
\centering
\subfigure[$10\%$ noise, $k_{+} = 60$, $k_{-} = 30$, $R=3$]
{
\begin{minipage}[t]{0.3\textwidth}
\centering
\includegraphics[width=\textwidth]{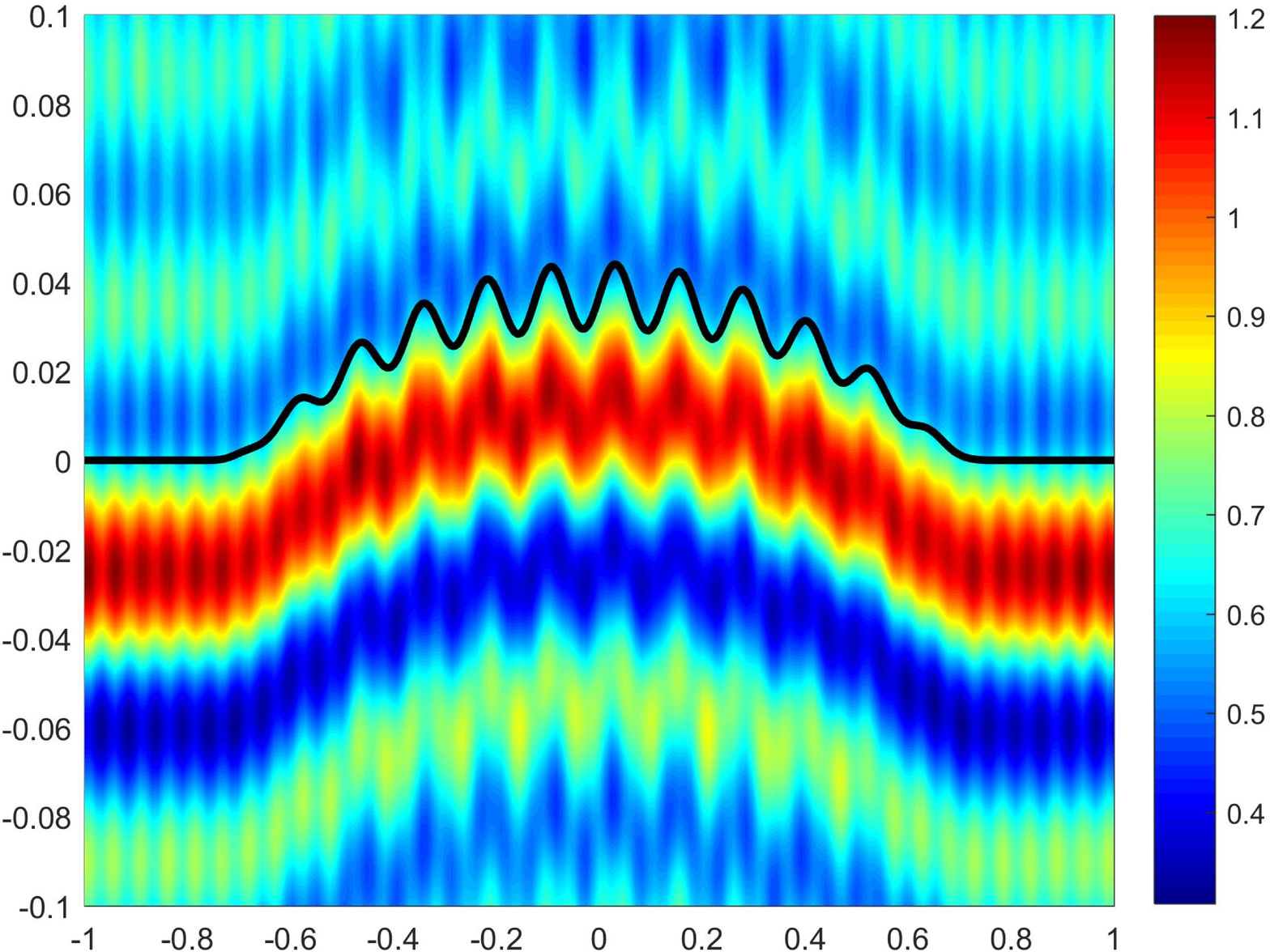}
\end{minipage}%
}%
\hfill
\subfigure[$10\%$ noise, $k_{+} = 90$, $k_{-} = 45$, $R=3$]
{
\begin{minipage}[t]{0.3\textwidth}
\centering
\includegraphics[width=\textwidth]{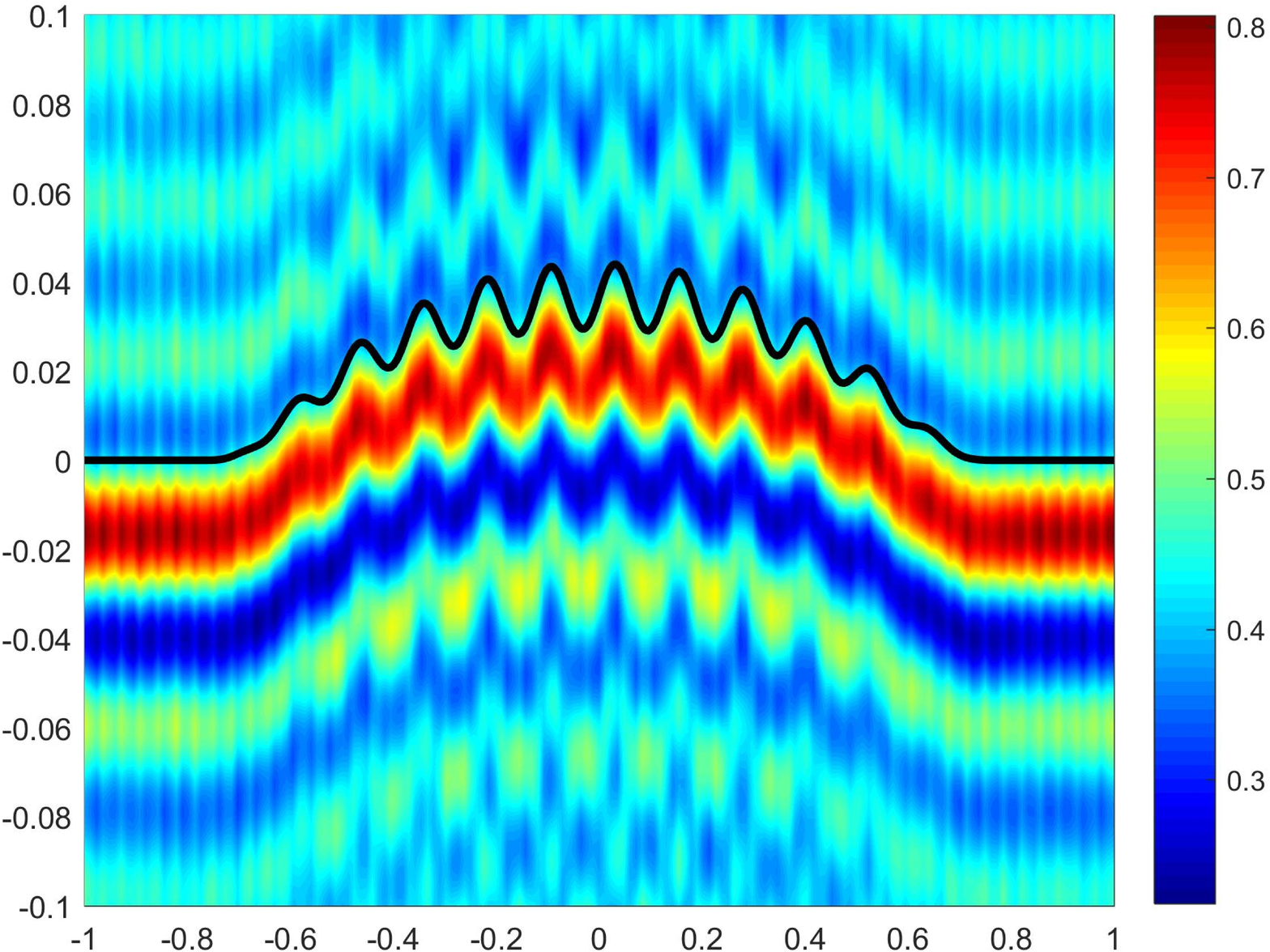}
\end{minipage}%
}
\hfill
\subfigure[$10\%$ noise, $k_{+} = 120$, $k_{-} = 60$, $R=3$]
{
\begin{minipage}[t]{0.3\textwidth}
\centering
\includegraphics[width=\textwidth]{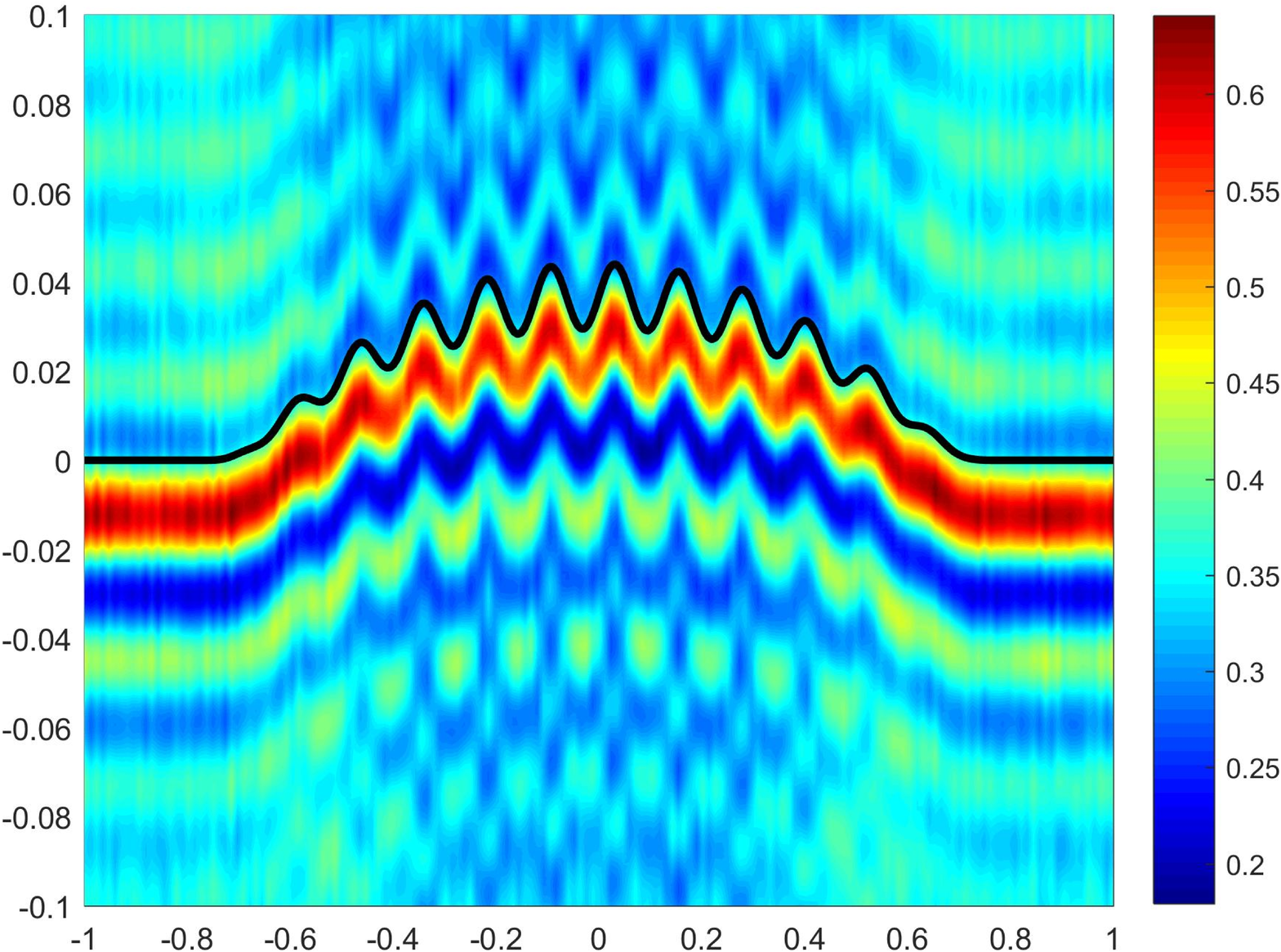}
\end{minipage}%
}%
\centering
\caption{Imaging results of $I_{P}(z,R)$ with the measured phaseless total-field data for different values of the wave numbers $k_+$ and $k_-$. The solid line represents the actual curve.}\label{fig5}
\end{figure}

\begin{figure}[htbp]
\centering
\subfigure[$10\%$ noise, $k_{+} = 60$, $k_{-} = 30$]
{
\begin{minipage}[t]{0.3\textwidth}
\centering
\includegraphics[width=\textwidth]{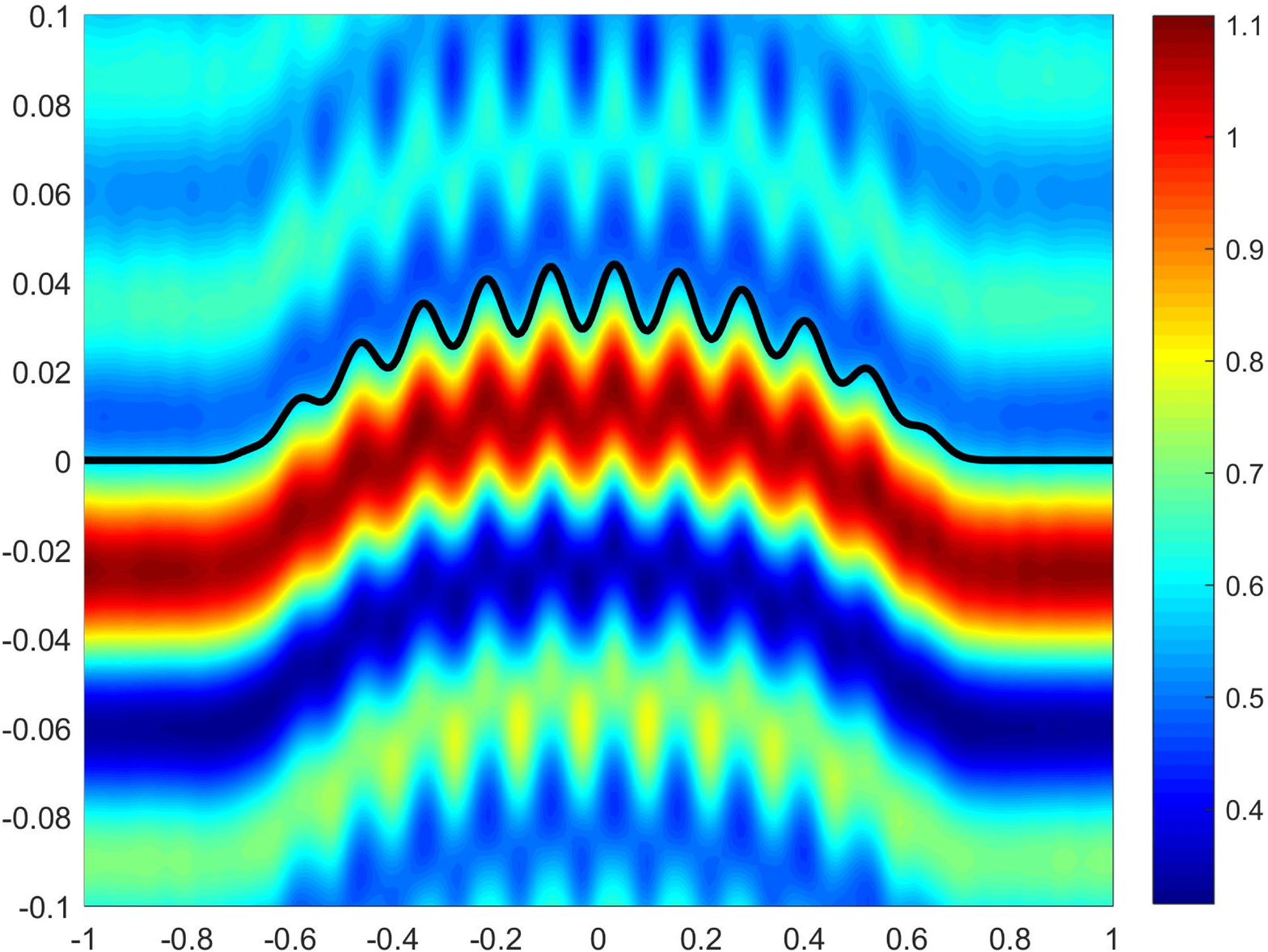}
\end{minipage}%
}%
\hfill
\subfigure[$10\%$ noise, $k_{+} = 90$, $k_{-} = 45$]
{
\begin{minipage}[t]{0.3\textwidth}
\centering
\includegraphics[width=\textwidth]{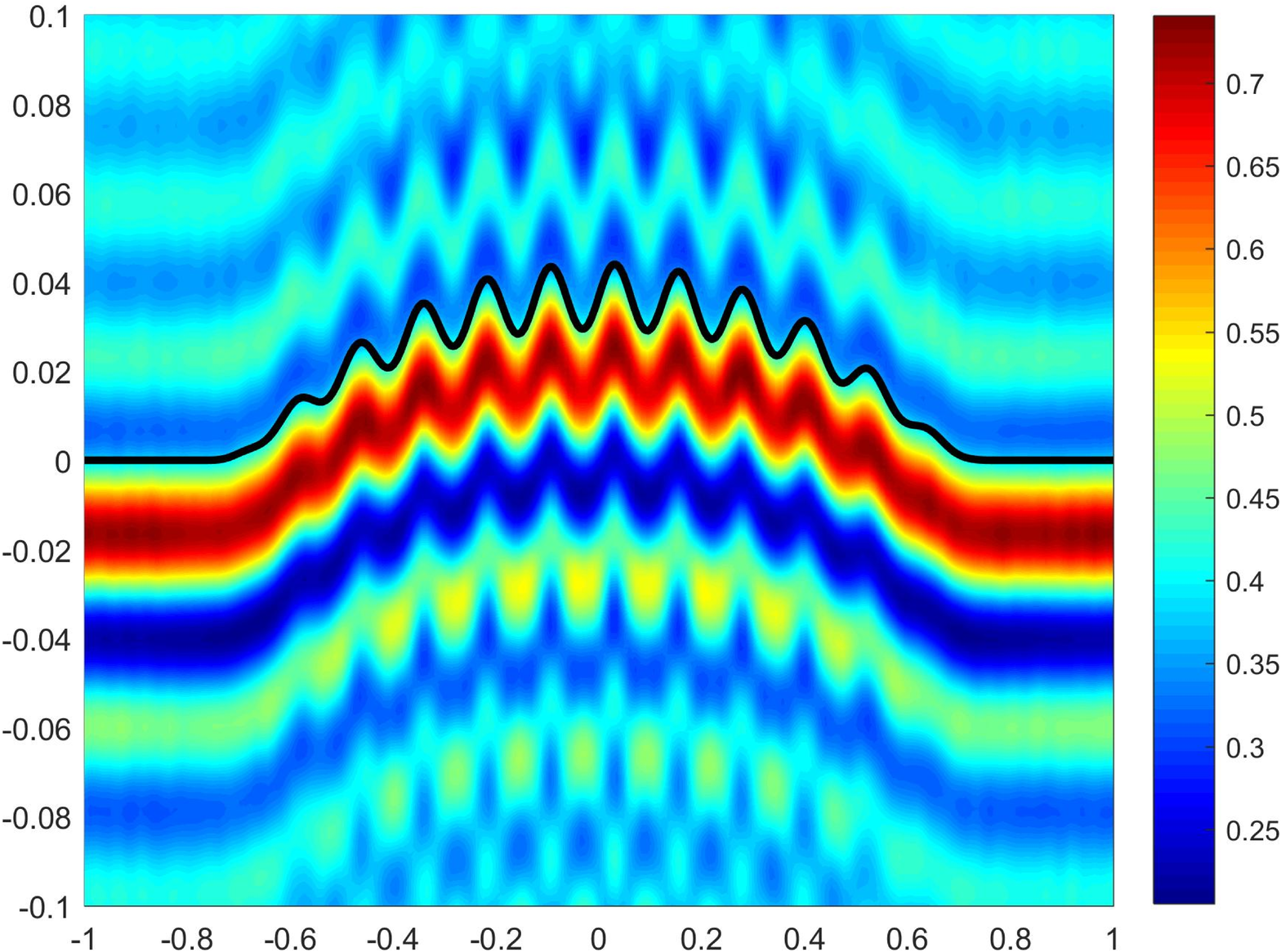}
\end{minipage}%
}
\hfill
\subfigure[$10\%$ noise, $k_{+} = 120$, $k_{-} = 60$]
{
\begin{minipage}[t]{0.3\textwidth}
\centering
\includegraphics[width=\textwidth]{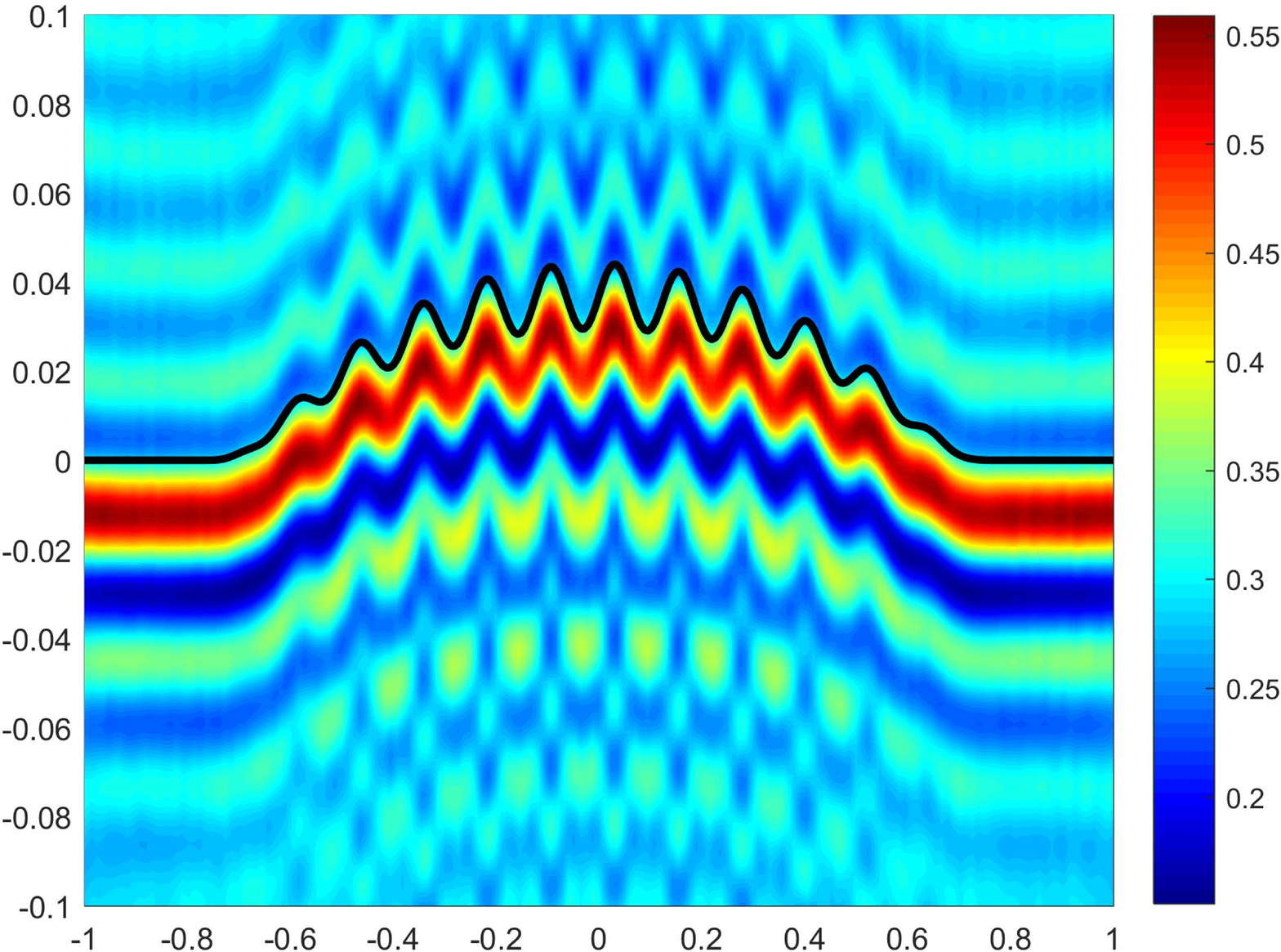}
\end{minipage}%
}%
\centering
\caption{Imaging results of $I_{F}(z)$ with the measured phased far-field data for different values of the wave numbers $k_+$ and $k_-$. The solid line represents the actual curve.}\label{fig6}
\end{figure}

\section{Conclusion}\label{con}

In this paper, we considered the problem of inverse scattering of time-harmonic acoustic plane waves
by a locally rough interface in a two-layered medium in 2D.
We have developed the direct imaging method with phaseless total-field data
and the direct imaging method with phased far-field data for reconstructing the penetrable locally rough interface.
We have also given the theoretical analysis of the proposed methods by studying the asymptotic
properties of relevant oscillatory integrals.
In doing so, an important role is played by the uniform far-field asymptotic properties of the scattered
wave for the acoustic scattering problem in the two-layered medium obtained in our recent work \cite{LYZZ3}.
Through various numerical experiments, it has been shown that our methods are effective for both
cases $k_{+}>k_{-}$ and $k_{+}<k_{-}$.
Moreover, for the considered scattering model,
it is interesting to study uniqueness of the inverse scattering problem in a two-layered medium
with a locally rough interface in 2D associated with phaseless total-field data and
with phased far-field data, which is still open and challenging.

\section*{Acknowledgments}

We thank Professor Wangtao Lu at the Zhejiang University for helpful and constructive
discussions on the perfectly matched layer-based boundary integral equation method proposed in \cite{LP_17}.
This work was partially supported by the National Key R \& D Program of China (2018YFA0702502),
China Postdoctoral Science Foundation Grant 2022M720158, Beijing Natural Science Foundation Z210001,
the NNSF of China grants 11961141007, 61520106004 and 12271515,
Microsoft Research of Asia, and Youth Innovation Promotion Association CAS.

\appendix

\renewcommand{\theequation}{\Alph{section}.\arabic{equation}}

\section{Proofs of Lemmas \ref{Le:3} and \ref{NLe:6}}\label{sec4}

\begin{proof}[Proof of Lemma \ref{Le:3}]
By a straightforward calculation, we have
\begin{align*}
I(\lambda) &= \int^{+\infty}_{-\infty} e^{-i\lambda\frac{\eta^2}2} d\eta-\int^{+\infty}_{b} e^{-i\lambda\frac{\eta^2}2}d\eta- \int^{a}_{-\infty} e^{-i\lambda\frac{\eta^2}2}d\eta\\
&=:I_1(\lambda)+I_2(\lambda)+I_3(\lambda).
\end{align*}
It follows from \cite[the last formula on page 98]{O97} that
\ben
I_1(\lambda) = 2 \int^{+\infty}_{0}\frac{e^{-i\lambda t}}{\sqrt{2t}}dt
=  \frac{e^{-i\frac{\pi}4}\sqrt{2\pi}}{\sqrt{\lambda}}.
\enn
Further, by the change of variable $\eta= \sqrt {2t}$ and an integration by parts, we have
\[
|I_2(\lambda)|=\left|\int^{+\infty}_{\frac{b^2}2} e^{-i\lambda t}\frac{1}{\sqrt{2t}}dt\right|
=\left|\frac{ie^{-i\lambda t}}{\lambda\sqrt{2t}}\bigg|^{+\infty}_{t=\frac{b^2}2}+ \frac{i}{\lambda}\int^{+\infty}_{\frac{b^2}2} e^{-i\lambda t}\frac{1}{2\sqrt 2t^{3/2}}dt\right|
\leq \frac{2}{\lambda b}.
\]
Similarly as the estimate of $I_2(\lambda)$, it can be deduced that $|I_3(\lambda)|\le {2}/({\lambda|a|})$.
Thus the statement of this lemma is obtained by the above discussions.
\end{proof}

\begin{proof}[Proof of Lemma \ref{NLe:6}]
Let $\eta\in[a,b]$ and define $g(\eta):= f{'}(\eta)\eta-(f(\eta)-f(0))$.
Now we claim that
\be\label{Fne20}
|g(\eta)|\le C(1+\|p\|_{C^3[a,b]})^3\|q\|_{C^3[a,b]}(1+t)^2\eta^2,\quad \eta\in[a,b].
\en
In fact, it is clear that $g(0)=g{'}(0)=0$. Thus for any $\eta\in[a,b]$, there exists
$\eta_1 = \theta \eta$ with some
$\theta \in (0,1)$ such that
\begin{align}\label{eq11}
g(\eta)= \frac{1}2g{''}(\eta_1)\eta^2= \frac{1}2\left[f{'''}(\eta_1)\theta\eta+f{''}(\eta_1)\right]\eta^2.
\end{align}
Hence, if $|\eta| \le ({1+t})^{-1}$, we have
\be\label{eq25}
|g(\eta)|\le C(1+\|p\|_{C^3[a,b]})^3\|q\|_{C^3[a,b]}(1+t)^2\eta^2.
\en
On the other hand, if $ |\eta| > ({1+t})^{-1}$, we have
\begin{align}\label{eq26}
\left|\frac{g(\eta)}{\eta^2}\right|&=\left|\frac{ f{'}(\eta)\eta-(f(\eta)-f(0))}{\eta^2}\right|
\le |f{'}(\eta)|(1+t)+|f(\eta)-f(0)|(1+t)^2\nonumber\\
&\le C(1+\|p\|_{C^1[a,b]})\|q\|_{C^1[a,b]}(1+t)^2.
\end{align}
Therefore, it follows from (\ref{eq25}) and (\ref{eq26}) that (\ref{Fne20}) holds.

Define the function $h(\eta):= [f(\eta)-f(0)]/\eta$ for $\eta \in[a,b]\ba\{0\}$. Since $g(\eta)=h'(\eta)\eta^2$
for $\eta \in[a,b]\ba\{0\}$, it easily follows from (\ref{Fne20}) and (\ref{eq11}) that
$h$ and its derivative can
be continuously extended from $[a,b]\ba\{0\}$ to $[a,b]$, and $h$ has the estimate
\begin{align}\label{eq32}
\|h\|_{C^1[a,b]}\le C(1+\|p\|_{C^3[a,b]})^3\|q\|_{C^3[a,b]}(1+t)^2.
\end{align}
On the other hand, an integration by parts gives that
\begin{align*}
I(\lambda)&= \frac{i}{\lambda}\left[e^{-i\lambda\frac{\eta^2}{2}}h(\eta)\bigg|^b_{\eta=a}-\int^{b}_a e^{-i\lambda\frac{\eta^2}{2}}h'(\eta)d\eta\right].
\end{align*}
This, together with (\ref{eq32}), implies that the statement of this lemma holds.
\end{proof}

\end{document}